\newtheorem{prop}{Proposition}
\newtheorem{lemma}{Lemma}
\newtheorem{thm}{Theorem}
\newtheorem{cor}{Corollary}
\theoremstyle{definition}
\newtheorem{defn}{Definition}
\newtheorem{ex}{Example}
\DeclareMathOperator{\ord}{ord}
\DeclareMathOperator{\Res}{Res}
\DeclareMathOperator{\ordRes}{ordRes}
\DeclareMathOperator{\MinResLoc}{MinResLoc}
\DeclareMathOperator{\PGL}{PGL}
\DeclareMathOperator{\BDV}{BDV}
\DeclareMathOperator{\CPA}{CPA}
\DeclareMathOperator{\Adj}{Adj}
\DeclareMathOperator{\slv}{slv}
\DeclareMathOperator{\GL}{GL}
\DeclareMathOperator{\Bary}{Bary}
\DeclareMathOperator{\supp}{supp}
\DeclareMathOperator{\diam}{\textrm{diam}}
\newcommand{\pberk}{\textbf{P}^1_{\textrm{K}}}
\newcommand{\hberk}{\textbf{H}^1_{\textrm{K}}}
\newcommand{\aberk}{\textbf{A}^1_{\textrm{K}}}
\newcommand{\zetaG}{\zeta_{\text{G}}}
\newcommand{\hsia}[3]{\delta(#1, #2)_{#3}}
\newcommand{\del}{\partial}
\newcommand{\dlim}{\displaystyle\lim}
\newcommand{\vv}{\vec{v}}
\newcommand{\vw}{\vec{w}}
\newcommand{\B}{\textrm{B}}
\newcommand{\CC}{\mathbb{C}}
\newcommand{\ZZ}{\mathbb{Z}}
\newcommand{\PP}{\mathbb{P}}
\newcommand{\RR}{\mathbb{R}}
\begin{document}
\title{Equidistribution of the Crucial Measures in non-Archimedean Dynamics}

\author{Kenneth Jacobs}
\address{Kenneth Jacobs\\ 
Department of Mathematics\\
University of Georgia\\
Athens, Georgia 30602\\
USA}
\email{kjacobs2@uga.edu}

\subjclass[2010]{Primary  37P50;
Secondary  37P30, 37P05, 11S82} 
\keywords{non-Archimedean dynamics, equidistribution, resultant, Berkovich space, barycenter, crucial measures} 

\begin{abstract}
Let $K$ be a complete, algebraically closed, non-Archimedean valued field, and let $\phi\in K(z)$ with $\deg(\phi) \geq 2$. In this paper we consider the family of functions $\ord\Res_{\phi^n}(x)$, which measure the resultant of $\phi^n$ at points $x$ in $\pberk$, the Berkovich projective line, and show that they converge locally uniformly to the diagonal values of the Arakelov-Green's function $g_{\mu_{\phi}}(x,x)$ attached to the canonical measure of $\phi$. Following this, we are able to prove an equidistribution result for Rumely's crucial measures $\nu_{\phi^n}$, each of which is a probability measure supported at finitely many points whose weights are determined by dynamical properties of $\phi$.

\end{abstract}
\maketitle

\section{Introduction}
Let $(K, \lvert\cdot\rvert)$ be a complete, algebraically closed, non-Archimedean valued field, $\mathcal{O}_K$ its ring of integers, and $\mathfrak{m}_K$ its maximal ideal. Denote by $k$ its residue field $k = \mathcal{O}_K / \mathfrak{m}_K$. We normalize the absolute value on $K$ so that $\log_v(|x|) = -\ord_{\mathfrak{m}_K}(x)$.

This paper is concerned with the dynamics of a rational map $\phi\in K(z)$ of degree $d\geq 2$ on the Berkovich projective line over $K$, which we denote $\pberk$. Rumely has recently introduced two equivariants attached to such a map that carry information about the reduction of conjugates of $\phi$ \cite{Ru1, Ru2}. The first equivariant is a function $\ordRes_\phi:\pberk \to \RR$, which measures the resultant of $\GL_2(K)$-conjugates of a homogeneous lift of $\phi$; if $\phi$ has potential good reduction, the locus where this function is minimized identifies the conjugate realizing good reduction. 

The second equivariant is a probability measure $\nu_\phi$ called the crucial measure, which is defined as a weighted sum of point masses (see \cite{Ru2} Theorem 6.2): $$\nu_{\phi} := \dfrac{1}{d-1} \sum_{P\in \hberk} w_{\phi}(P)\delta_P\ ;$$ here, $w_{\phi}(P)$ is a certain weight function which vanishes on points of type I, III and IV and whose values at type II points are determined by the reduction of $\phi$ at the corresponding point $P\in \hberk$; an explicit formula is given in \cite{Ru2} Definition 8. In particular, the weight function is integer valued, and only finitely many points have positive weight. Rumely showed that the formulation of $\nu_\phi$ given above arises naturally when computing the Laplacian of $\ordRes_\phi$ on a canonical subtree $\Gamma_{\widehat{\textrm{FR}}}$ of $\hberk$ (see \cite{Ru2} Corollary 6.5). 

These equivariants have been used to establish several important facts in non-Archimedean dynamics. Using the measures $\nu_\phi$, Rumely has shown that $\phi$ can have at most $d-1$ repelling type II points in $\pberk$ (see \cite{Ru2} Corollary 6.3); prior to this, it was unknown whether there were always finitely many such points. Rumely has also used these measures to show that semistable reduction of $\phi$ is equivalent to minimality of the resultant (see \cite{Ru2} Theorem 7.4. Szpiro, Tepper and Williams had previously shown that semi-stable reduction {\it implies} minimality of the resultant using different techniques, and their result holds for morphisms on higher dimension projective spaces; see \cite{STW} Theorem 3.3). In \cite{DJR} Doyle, the author, and Rumely used $\ordRes_\phi$ and $\nu_\phi$ to show that for quadratic rational maps, the points in $\supp(\nu_\phi)$ determine the class of the reduction $\widetilde{\phi}$ in the moduli space $\mathcal{M}_2(k)$. \\

In this paper, we consider the corresponding equivariants attached to the iterates of $\phi^n$. Our first result concerns the functions $\ordRes_{\phi^n}$:

\begin{thm}\label{thm:fnconv}
For $x\in \hberk$, the normalized functions $$\frac{1}{d^{2n}-d^n}\ord\Res_{\phi^n}(x)$$ converge to the diagonal values of the Arakelov-Green's function, $g_{\phi}(x,x)$ of $\phi$. The convergence is locally uniform on $\hberk$ in the strong topology. 
\end{thm}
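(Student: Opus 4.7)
The plan is to recognize both sides of the claimed convergence as potential-theoretic quantities and reduce the theorem to a weak-convergence statement already known from \cite{Ru2}. The family $\{\frac{1}{d^{2n}-d^n}\ord\Res_{\phi^n}\}$ carries, via its Laplacian on a suitable subtree, the crucial measure $\nu_{\phi^n}$; the target function $g_\phi(x,x)$ is the diagonal of the Arakelov-Green function of $\mu_\phi$. Since $\nu_{\phi^n}\to\mu_\phi$ weakly by results in \cite{Ru2}, the hope is to promote this weak convergence to locally uniform convergence at the level of the corresponding diagonal energies.

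The first step is to establish, at the level of a single iterate, an identity of the form
\[
\frac{1}{d^2-d}\ord\Res_\phi(x) \;=\; g_{\nu_\phi}(x,x) + C_\phi,
\]
identifying the normalized resultant, up to an additive constant, with the diagonal Arakelov-Green function attached to $\nu_\phi$. To verify this I would show the two sides have proportional Laplacians on a suitable subtree (both recovering $\nu_\phi$, by the definition of $\nu_\phi$ on the left and by the defining property of the Arakelov-Green function on the right), invoke connectedness of $\pberk$, and pin down the constant $C_\phi$ by evaluating at $\zetaG$, where $\ord\Res_\phi(\zetaG)$ reduces to the valuation of the homogeneous resultant of any normalized lift.

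Applying this base identity to $\phi^n$ and using that $\phi^n$ has the same invariant measure as $\phi$ (so the associated Arakelov-Green functions coincide) yields
\[
\frac{1}{d^{2n}-d^n}\ord\Res_{\phi^n}(x) \;=\; g_{\nu_{\phi^n}}(x,x) + \frac{C_{\phi^n}}{d^{2n}-d^n}.
\]
Two things then remain: that the normalized constants $C_{\phi^n}/(d^{2n}-d^n)$ tend to zero, which should follow from a crude growth bound on $\ord\Res_{\phi^n}(\zetaG)$; and that $g_{\nu_{\phi^n}}(x,x) \to g_{\mu_\phi}(x,x)$ locally uniformly in the strong topology, driven by the weak convergence $\nu_{\phi^n}\to\mu_\phi$.

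The principal obstacle lies in this last step. The Hsia kernel $-\log\delta(x,y)_\zeta$ underlying the Arakelov-Green energies is continuous in the strong topology but unbounded as $y$ approaches a type I point, while $\mu_\phi$ is generally supported on the Julia set and may charge type I points, even though each $\nu_{\phi^n}$ is supported on finitely many type II points. I therefore expect the technical heart of the argument to be a uniform local integrability estimate for $-\log\delta(x,y)_\zeta$ against $\{\nu_{\phi^n}\}$, together with an equicontinuity bound for the family $\{x\mapsto g_{\nu_{\phi^n}}(x,x)\}$ over compact subsets of $\pberk$. Establishing these uniform estimates, and reconciling the change in support type between $\nu_{\phi^n}$ and $\mu_\phi$, is where the work will sit.
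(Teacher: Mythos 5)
Your plan has a structural problem that makes it circular within the logic of this paper, and a second, substantive gap even if that circularity could be fixed.

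The circularity: you propose to drive the convergence $g_{\nu_{\phi^n}}(x,x)\to g_{\mu_\phi}(x,x)$ by the weak convergence $\nu_{\phi^n}\to\mu_\phi$, which you take as ``already known from \cite{Ru2}.'' It is not. The crucial measures are \emph{introduced} in \cite{Ru2}, but their weak convergence to $\mu_\phi$ is precisely Theorem~\ref{thm:wkconv} of the present paper, and that theorem is proved in Section~\ref{sect:lapconv} \emph{as a consequence of} Theorem~\ref{thm:fnconv} via Theorem~\ref{thm:fnconvexplicit} and the Laplacian computations (Proposition~\ref{prop:lapgamma}, Theorem~\ref{thm:wkconvexplicit}). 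So you would be assuming the paper's second main result to deduce the first, reversing the logical arrow.

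The substantive gap, which you flag but underestimate: even with weak convergence of the measures in hand, it is simply false in general that $\nu_n\to\nu$ weakly implies $g_{\nu_n}(x,x)\to g_\nu(x,x)$ in any topology, let alone locally uniformly in the strong metric. The Arakelov--Green function involves the kernel $-\log_v\delta(x,y)_\zeta$, which diverges as $y$ approaches $x$ or a type I point, and its normalizing constant involves the energy $\iint -\log_v\delta(x,y)_\zeta\, d\nu\,d\nu$, which is not weakly continuous. Passing from weak convergence of the $\nu_{\phi^n}$ to convergence of their potentials would require a separate, non-trivial energy-convergence or equicontinuity argument tuned to the dynamical structure of these particular measures; you name this as ``where the work will sit,'' but it is a theorem in its own right, not a technical estimate to fill in.

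By contrast, the paper's proof never goes through the measures at all. It writes $\ord\Res_{\phi^n}(\gamma(\zetaG))$ via the exact identity (\ref{eq:ORndecomp}) as a sum of three terms and matches them term-by-term against the three summands of $g_{\mu_\phi}(x,x)$ in (\ref{eq:Greendecomp}): the resultant term is shown to equal its counterpart \emph{exactly} for all $n$ (Lemma~\ref{lem:ordresit}), the $\ord\det(\gamma)$ term matches $-\log_v\delta(x,x)_\infty$ exactly (Lemma~\ref{lem:orddethsia}), and the remaining $\log\max$ term is shown to be a geometrically convergent sequence of convergents for $\hat{h}_{\phi,v}$ with an explicit error in terms of $\rho(x,\zetaG)$ (Lemmas~\ref{lem:precomp}--\ref{lem:htconvergent}, Proposition~\ref{prop:htconvergent}). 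This direct route both sidesteps any appeal to weak convergence and yields the explicit geometric rate of Theorem~\ref{thm:fnconvexplicit}. If you want to repair your proposal, you would need to prove the base identity and the energy-convergence step independently of Theorem~\ref{thm:wkconv}; but at that point you would essentially be reconstructing the paper's Lemma~\ref{lem:lapGammaFRn} and Proposition~\ref{prop:lapgamma} anyway.
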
 

\noindent We state and prove a more explicit version of Theorem~\ref{thm:fnconv} in Section \ref{sect:fnconv} below (see Theorem~\ref{thm:fnconvexplicit}); in particular, we are able to show that the error term is $O\left(\frac{1}{d^n}\right)$. Our proof comes from a decomposition of $\ordRes_\phi$ into three terms, which closely parallels a decomposition of $g_\phi(x,x)$ given in \cite{BR} Chapter 10 (see also Table~\ref{table:decomp} below).

Our second main result is the equidistribution of the crucial measures $\{\nu_{\phi^n}\}$ attached to the iterates $\phi^n$:

\begin{thm}\label{thm:wkconv}
The measures $\nu_{\phi^n}$ converge weakly to the canonical measure $\mu_{\phi}$.
\end{thm}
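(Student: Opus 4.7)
The plan is to convert the locally uniform function convergence in Theorem \ref{thm:fnconv} into weak convergence of the associated Laplacian measures. Recall from \cite{Ru2} that $\nu_{\phi^n}$ arises (up to a normalizing constant) as the Laplacian of $\ord\Res_{\phi^n}(x)$. The first step is to express
$$\nu_{\phi^n} = \Delta u_n, \qquad u_n := \alpha_n \cdot \ord\Res_{\phi^n}(x),$$
globally on $\pberk$, where $\alpha_n$ is chosen so that $\nu_{\phi^n}$ is a probability measure. Comparison with Theorem \ref{thm:fnconv} shows that $\alpha_n$ is proportional to $1/(d^{2n}-d^n)$, so the $u_n$ converge locally uniformly to $c \cdot g_\phi(x,x)$ for an explicit constant $c$.

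Next, I would invoke the continuity of the Berkovich Laplacian on $\BDV$ functions: uniform convergence of $\BDV$ functions on compacts of $\hberk$ yields weak convergence of their Laplacians against continuous test functions on $\pberk$. Combined with the previous step, this produces $\nu_{\phi^n} \to c \cdot \Delta g_\phi(x,x)$ weakly on $\pberk$.

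It then suffices to identify $c \cdot \Delta g_\phi(x,x)$ with $\mu_\phi$. This should follow from the defining properties of the Arakelov-Green function: $g_\phi(x,y)$ is symmetric, satisfies $\Delta_x g_\phi(x,y) = \mu_\phi - \delta_y$, and $\int g_\phi(x,y)\, d\mu_\phi(y)$ is the (constant) Robin number for $\mu_\phi$. A direct manipulation via the Hsia-kernel decomposition of $g_\phi$ should pin down the constant $c$ and confirm the identity.

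The main anticipated obstacle is this last identification: diagonal Laplacians of Arakelov-Green functions demand careful bookkeeping, since one cannot naively differentiate under a variable coupled to both slots of $g_\phi$. A secondary concern is that Theorem \ref{thm:fnconv} only gives \emph{locally} uniform convergence; one must check that no mass of $\nu_{\phi^n}$ escapes to the classical points of $\mathbb{P}^1(K)$, which should follow from uniform control on the finite supports of the $\nu_{\phi^n}$.
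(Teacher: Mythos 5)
Your overall strategy (pass from the uniform function convergence of Theorem~\ref{thm:fnconv} to weak convergence of the associated Laplacian measures) is the right instinct and does parallel the paper's method, but several of your concrete claims are false and conceal exactly the technical work the paper has to do.

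First, the identity $\nu_{\phi^n} = \Delta u_n$ cannot hold for any choice of normalization $\alpha_n$: on any graph $\Gamma$ the Laplacian $\Delta_\Gamma u$ always has total mass $0$, while $\nu_{\phi^n}$ is a probability measure. What \cite{Ru2} actually gives (see the Corollary reproduced as Lemma~\ref{lem:lapGammaFRn}'s source) is $\Delta_{\Gamma_{\widehat{FR},n}}\bigl(\tfrac{1}{d^{2n}-d^n}\ord\Res_{\phi^n}\bigr) = 2\mu_{Br,\Gamma_{\widehat{FR},n}} - 2\nu_{\phi^n}$, so $\nu_{\phi^n}$ differs from a constant times the Laplacian by a \emph{branching measure} $\mu_{Br,\Gamma}$. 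That branching term is graph-dependent and does not stabilize as $\Gamma$ exhausts $\hberk$, so the identity does not even make sense ``globally on $\pberk$.''

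Second, the same defect afflicts your third step: $\Delta g_\phi(x,x)$ is not $c\,\mu_\phi$. Using the decomposition $g_{\mu_\phi}(x,x) = -\log\delta(x,x)_\infty + 2\hat h_\phi(x) + \text{const}$, together with $\Delta_\Gamma\hat h_\phi = (r_\Gamma)_*(\delta_\infty-\mu_\phi)$ and Lemma~\ref{lem1}, one gets $\Delta_\Gamma g_{\mu_\phi}(\cdot,\cdot) = 2\mu_{Br,\Gamma} - 2(r_\Gamma)_*\mu_\phi$. Again there is an unavoidable branching-measure contribution. The paper sidesteps both problems in one stroke by taking the Laplacian on $\Gamma$ of the \emph{difference} $\tfrac{1}{d^{2n}-d^n}\ord\Res_{\phi^n} - g_{\mu_\phi}$ (Proposition~\ref{prop:lapgamma}), in which the two branching measures cancel identically, leaving $2(r_{\pberk,\Gamma})_*(\mu_\phi-\nu_{\phi^n})$ plus an error $\tfrac{2}{d^n-1}(r_{\pberk,\Gamma})_*\Lambda_n$.

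Third, ``continuity of the Berkovich Laplacian on BDV functions'' in the sense you use it --- that uniform convergence of functions implies weak convergence of Laplacians against all continuous test functions --- is not a theorem, and is false without additional hypotheses (already on $\mathbb{R}$, $\tfrac{1}{n}\sin(n^2t)\to 0$ uniformly while the second derivatives do not). The mechanism that actually works here, and that the paper uses, is self-adjointness on finite graphs: for $f\in\CPA(\Gamma)$, one has $\int_\Gamma f\,d\Delta_\Gamma(u) = \int_\Gamma u\,d\Delta_\Gamma(f)$, so the uniform bound from Theorem~\ref{thm:fnconvexplicit} transfers to a bound on $\int_\Gamma f\,d(\mu_\phi-\nu_{\phi^n})$ against the \emph{finite} measure $\Delta_\Gamma f$. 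You then still have to control the extra term $\Lambda_n$ (which is supported on the part of $\Gamma$ not lying in $\Gamma_{\widehat{FR},n}$ and is uniformly $O(1)$ independent of $n$ --- Lemma~\ref{lem:lapgammak}), and then upgrade from $\CPA(\Gamma)$ test functions to arbitrary $F\in\mathcal{C}(\pberk)$ via the approximation result \cite{BR}, Proposition 5.4. Neither the $\Lambda_n$ bookkeeping nor the final approximation step appears in your outline, and the first of these is where most of the combinatorial work in Section~\ref{sect:lapconv} (the slope formulae, Lemma~\ref{lem:treelemma}, Propositions~\ref{prop:laponept} and \ref{prop:lapgamma0}) is actually spent.
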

\noindent The canonical measure $\mu_{\phi}$ is the unique $\phi$-invariant measure which does not charge the exceptional set and which satisfies $\phi^* \mu_\phi = d\cdot \mu_\phi$ (see \cite{FRL2}).  Here too, an explicit version of the theorem will be given for a dense class of test functions (see Theorem~\ref{thm:wkconvexplicit}). As was the case in Theorem~\ref{thm:fnconv}, the error term is $O\left(\frac{1}{d^n}\right)$. We give explicit examples of the equidistribution for the map $\phi(z) = \frac{z^p-z}{p}$ over $\CC_p$, $p\geq 3$, and for flexible Latt\`es maps (see Section~\ref{sect:exampleswkconv}). 

Our main tool in proving Theorem~\ref{thm:wkconv} is an explicit computation of the Laplacian of $\ordRes_\phi$ on {\it arbitrary} subtrees of $\hberk$, augmenting a result of Rumely (\cite{Ru2} Corollary 6.5) who computed the Laplacian on the subtree $\Gamma_{\textrm{Fix, Repel}}$ spanned by the type I fixed points and type II repelling fixed points. These computations are carried out in Sections~\ref{subsect:slopesagain} and~\ref{sect:applap}.\\

We next apply Theorems~\ref{thm:fnconv} and~\ref{thm:wkconv} to the study of the Minimal Resultant Locus, $\MinResLoc(\phi)$, which is the set of points where $\ordRes_\phi$ attains its minimum. If $\phi$ has potential good reduction, it is a single type II point in $\hberk$ corresponding to a $\PGL_2(K)$-conjugate $\phi^\gamma$ which has good reduction. Otherwise, $\MinResLoc(\phi)$ is either a point or a segment in $\hberk$ (\cite{Ru1} Theorem 0.1). From a measure theoretic perspective, $\MinResLoc(\phi)$ is the barycenter of the crucial measure $\nu_\phi$ (\cite{Ru2} Theorem 7.1; a formal definition of barycenter in this context is given in Section~\ref{sect:barycenters} below).

In Section~\ref{sect:barycenters} below, we show that $\MinResLoc(\phi^n)$ is closely related to the barycenter $\Bary(\mu_\phi)$ of the canonical measure $\mu_\phi$. If $\phi$ has potential good reduction, $\Bary(\mu_\phi)$ is also a single type II point corresponding to a $\PGL_2(K)$-conjugate $\phi^\gamma$ which has good reduction; otherwise, it is a point or a segment in $\hberk$. Moreover, $\Bary(\mu_\phi)$ is the collection of points where the diagonal Arakelov-Green's function $g_\phi(x,x)$ is minimized. These facts about the barycenter are originally due to Rivera-Letelier, but as their proofs have not yet been published we have included our own in Section~\ref{sect:barycenters}.

Our main result concerning the sets $\MinResLoc(\phi^n)$ is the following: 

\begin{thm}\label{thm:main:baryresults}
Suppose $\textrm{char}(K) = 0$. Let $\phi \in K(z)$ have degree $d\geq 2$, and let $R=\frac{2}{d-1}\ord\Res(\phi)$. Let $B_\rho(\zeta, R)$ denote the ball of radius $R$ about $\zeta$ in the $\rho$-metric on $\hberk$.
\begin{itemize}
\item[] $[A]$\hspace{0.25cm}(Approximation) For every $\epsilon>0$, there exists an $N$ so that $\MinResLoc(\phi^n)$ is contained in the $\epsilon$-ball around $\Bary(\mu_\phi)$ for every $n\geq N$. 
\item[] $[B]$\hspace{0.25cm}(Uniform Bounds) For every $n$ we have $\MinResLoc(\phi^n) \subseteq B_\rho(\zetaG, R)$; moreover, there is a constant $M$ depending only on $\phi$ such that $\Bary(\mu_\phi) \subseteq B_\rho(\zetaG, R+M)$.
\end{itemize}
\end{thm}

Part [A] is established in Proposition~\ref{prop:minresepsilon} below; there, we rely on explicit estimates for the slope of the diagonal Arakelov-Green's function $g_\phi(x,x)$. Part [B] is established in Corollary~\ref{cor:boundonminresloc} and Proposition~\ref{prop:RboundsonBary}; in proving these results, we use an explicit estimate for the rate of growth of the constant and leading coefficients of homogeneous lifts $\Phi^n$ of $\phi^n$.

\subsection{Outline of the Paper}

The rest of the paper is divided into six sections. In Section \ref{sect:convandnot}, some conventions and notations concerning the Berkovich projective line $\pberk$ and dynamics on $\pberk$ are developed. In Section \ref{sect:fnconv} we prove Theorem \ref{thm:fnconvexplicit}, which is a more explicit version of Theorem \ref{thm:fnconv}. 

Following this, in Section \ref{sect:lapconv}, we set out to show the weak convergence of the family of crucial measures. For this we develop formulae similar to those given in \cite{Ru2}, Propositions 5.2-5.4 for the slopes of $\ord\Res_{\phi^n}(x)$ on subtrees in $\hberk$. To show weak convergence, we first prove Theorem \ref{thm:wkconvexplicit}, which gives an explicit convergence estimate for test functions that are continuous, piecewise affine functions on fixed finite subtrees $\Gamma\subseteq\hberk$. An approximation theorem for arbitrary continuous functions (see \cite{BR} Proposition 5.4) allows this result to be extended as needed to show weak convergence. 

In Section~\ref{sect:barycenters}, we define the barycenter of a finite, positive Radon measure on $\pberk$ and give some of its basic properties, all of which are originally due to Rivera-Letelier. Using the estimates in Sections~\ref{sect:fnconv}--\ref{sect:lapconv}, we show that the sets $\MinResLoc(\phi^n)$ all lie near to $\Bary(\mu_\phi)$, and we give an explicit example that rules out Hausdorff convergence in general. Finally, in Section~\ref{sect:barybounds} we establish the uniform bounds on $\MinResLoc(\phi^n)$ and $\Bary(\mu_\phi)$. 

\subsection{Acknowledgements}

The author was partialy supported by a Research Training Grant, DMS- 1344994, from the NSF, and would like to thank Robert Rumely and Rob Benedetto for helpful conversations. The author would also like to thank the anonymous referee for helpful feedback on earlier drafts of this paper. 

\section{Conventions and Notation}\label{sect:convandnot}

In this section we review some of the basic results about the dynamics of a rational map on $\pberk$; a rigorous development can be found in \cite{BR}. 

\subsection{Formal Structure of $\pberk$}
One way to obtain $\pberk$ is by gluing together two copies of the affine Berkovich line over $K$, denoted $\aberk$. The space $\aberk$ is the collection of equivalence classes of multiplicative seminorms on $K[T]$ that extend the absolute value $\lvert\cdot\rvert$ on $K$. Among these there are two evident seminorms: the evaluation seminorm, given $[f]_a:= |f(a)|$ for a fixed $a\in K$, and disc seminorms, $[f]_{D(a,r)}:= \sup_{x\in D(a, r)}|f(x)|$.  Any equivalence class of seminorms $[\cdot]_x$ can be obtained as a limit of disc seminorms associated to a nested, decreasing sequence of discs $D(a_i, r_i)\supseteq D(a_{i+1}, r_{i+1})\supseteq...$ (see \cite{BR} Theorem 2.2, which is a generalization of Berkovich's Classification Theorem, \cite{Ber}); more concretely: $$[f]_x := \lim_{i\to\infty} [f]_{D(a_i, r_i)}\ .$$ The resulting seminorm falls into one of four classes:
\begin{itemize}
\item[-] Type I points correspond to the evaluation seminorms described above. In this way, we often consider them as the points of $K$ lying in $\aberk$.
\item[-] Type II points correspond to disc seminorms whose discs have radius $r\in |K^\times|$.
\item[-] Type III points correspond to disc seminorms whose discs have radii that do not lie in $|K^\times|$.
\item[-] Type IV points serve to `complete' the space in some sense; they correspond to sequences of discs whose intersection is empty. $\aberk$ will not have any type IV points if the field $K$ is spherically complete.
\end{itemize}

Type II and type III points are denoted by $\zeta_{D(a, r)}$ or $\zeta_{a,r}$ where $D(a, r)$ is the associated disc. Among these, we distinguish the point $\zetaG := \zeta_{D(0,1)}$ corresponding to the unit disc in $K$; the associated seminorm is the classical Gauss norm on $K[T]$ and so $\zetaG$ is referred to as the Gauss point. We define the space $\hberk$, the Berkovich hyperbolic space, to be the collection of type II, III and IV points. 

A final fact of fundamental importance is that the action of a non-constant rational map on $\mathbb{P}^1(K)$ extends naturally to an action on $\pberk$, and such maps will preserve the type of the point upon which they act (\cite{BR} Proposition 2.15). In particular, the maps $\gamma\in \PGL_2(K)$ act transitively on the type II points of $\pberk$, and any type II point $\zeta_{a,r}$ can be written as $\zeta_{a,r} = \gamma(\zetaG)$, where $\gamma = \left(\begin{matrix} q & a \\ 0 & 1\end{matrix}\right)$ and $|q|_v = r$. 

\subsection{Resultants and Reductions}
Let $\phi\in K(z)$ be a rational map of degree $d\geq 2$, and let $\Phi = [F,G]$ be a homogeneous lift of $\phi$ to $\mathbb{A}^2$. More concretely, we have $F(X,Y) = a_dX^d+a_{d-1}X^{d-1}Y+...+a_0 Y^d$ and $G(X,Y) = b_d X^d+b_{d-1}X^{d-1}Y + ... + b_0 Y^d$, with $\phi(z) = F(z,1)/G(z,1)$. Following Rumely \cite{Ru1, Ru2}, we will say that a homogeneous lift $[F,G]$ is normalized if $F,G\in \mathcal{O}_K[X,Y]$, and at least one coefficient of $F$ or $G$ is a unit. By rescaling a given pair of polynomials $F, G$, we can always assume that $[F,G]$ is normalized. Note that a normalized lift is unique up to scaling the polynomials $F, G$ by a unit in $\mathcal{O}_K$. \\

If $[F,G]$ is a normalized lift of $\phi$, we can form the reduction of $\phi$ as follows: let $\tilde{F}(X,Y) = \tilde{a_d} X^d + ... + \tilde{a_0} Y^d , \tilde{G}(X,Y) = \tilde{b_d} X^d + ... + \tilde{b_0} Y^d$, where $\tilde{\cdot}:\mathcal{O} \to \mathcal{O}/\mathfrak{m} = k$ is the quotient map. While the maps $F,G$ were assumed to be coprime, their reductions $\tilde{F}, \tilde{G}$ need not be coprime; let $\tilde{A} = \gcd(\tilde{F}, \tilde{G})$. Writing $\tilde{F} = \tilde{A}\cdot\tilde{F}_0, \tilde{G} = \tilde{A} \cdot \tilde{G}_0$, the reduction of $\phi$ is defined to be the map on $\PP^1(k)$ represented by $[\tilde{F}_0, \tilde{G}_0]$, which we denote by $\tilde{\phi}$. 

In general, the degree of $\tilde{\phi}$ may be less than the degree of $\phi$, a reflection of the fact that $\tilde{F}, \tilde{G}$ may have factors in common that were not common to $F$ and $G$. The map $\phi$ is said to have good reduction if $\tilde{\phi}$ has the same degree as $\phi$. The map $\phi$ is said to have potential good reduction if, after a change of coordinates by $\gamma \in \PGL_2(K)$ -- i.e. replacing $\phi$ by $\phi^\gamma = \gamma^{-1} \circ \phi \circ \gamma$  -- the resulting map $\phi^\gamma$ has good reduction. If neither of these cases hold, we say that $\phi$ has bad reduction.\\

We can, more generally, speak of the reduction of $\phi$ at a type II point $P\in \hberk$ as follows: choose $\sigma_1, \sigma_2 \in \PGL_2(K)$ so that $\sigma_1(\zetaG) = P$ and $\sigma_2(\zetaG) = \phi(P)$. Then the reduction of $\phi$ at $P$ is defined to be the reduction of a normalized lift of $\phi^\sigma = \sigma_2\circ\phi\circ\sigma_1$. We denote the reduction of $\phi$ at $P$ again by $\tilde{\phi}$, letting the context determine the point at which the reduction is being considered. The degree of the reduction at $P$ will be written $\deg_P(\phi)$. Following the definition above, we say that $\phi$ has good reduction at $P$ if $\deg_P(\phi)$ equals the degree of $\phi$.

A type II point $P\in \hberk$ with $\deg_P(\phi) = 1$ is called an indifferent point. In \cite{Ru2}, Rumely introduced a further stratification of the reduction of $\phi$ at indifferent points:
\begin{itemize}
\item If, after some change of coordinates on $\mathbb{P}^1(k)$ the reduction $\tilde{\phi}$ lifts to a map of the form $\tilde{\Phi}=[\lambda X, Y]$ for some $\lambda\in k\setminus \{0, 1\}$, we say that $\Phi$ has multiplicatively indifferent reduction at $P$.
\item If, after change of coordinates on $\PP^1(k)$, $\tilde{\phi}$ has a lift of the form $\tilde{\Phi} = [X+aY, Y]$ for some $a\in k\setminus \{0\}$ we say that $\phi$ has additively indifferent reduction. 
\item If $\tilde{\phi}$ lifts to $\tilde{\Phi} = [X,Y]$, then we say that $\phi$ has id-indifferent reduction at $P$. 
\end{itemize}
The reduction type of indifferent points affects the behaviour of $\phi$ nearby those points. We also briefly recall the {\em locus of id-indifference}, denoted $U_\textrm{id}$. This is the set of all point $P\in \hberk$ for which $\phi$ has id-indifferent reduction\footnote{ Technically, one must also define the reduction types for type III and IV points, and include the resulting id-indifferent points. This is done by passing to an appropriate extension of $K$; see \cite{Ru2} Section 9.}. Details about this set are given in Sections 9 and 10 of \cite{Ru2}; here we recall that $U_\textrm{id}$ has at most finitely many connected components, and the closure of each component contains at least two type I fixed points (counting multiplicity). Endpoints of $U_\textrm{id}$ are either repelling type II fixed points, additively indifferent type II fixed points, or indifferent type I fixed points.\\

A way to measure whether or not two homogeneous polynomials $F(X,Y), G(X,Y)$ have a common factor is by looking at the resultant. It is a polynomial in the coefficients of $F$ and $G$ that vanishes precisely when $F$ and $G$ have a common factor. Formally, it is defined as follows: let $[F,G]$ be a normalized lift of $\phi$. Then

\begin{align*}
\ord\Res(F,G) = & \ord\det\left(
\begin{array}{cccccccc} 
a_d & a_{d-1} & \dots & a_1 &a_0 &0 &  \dots & 0 \\ 
0 & a_d & a_{d-1} & \dots & a_1 & a_0 & \dots & 0 \\
\vdots&& & \ddots & \vdots & \vdots & & \vdots \\
0 & 0 & 0 & a_d & a_{d-1} & \dots & a_1 & a_0 \\
b_d & b_{d-1} & \dots & b_1 &b_0 &0 &  \dots & 0 \\ 
0 & b_d & b_{d-1} & \dots & b_1 & b_0 & \dots & 0 \\
\vdots&& & \ddots & \vdots & \vdots & & \vdots \\
0 & 0 & 0 & b_d & b_{d-1} & \dots & b_1 & b_0 \\
\end{array}\right)\ .
\end{align*}

We observe that $\ordRes(cF,cG) = \ordRes(F,G) + 2d\ord(c)$, and hence the quantity $\ordRes(F,G)$ is independent of which normalized lift $[F,G]$ of $\phi$ we choose. Note also that for a normalized representation $[F,G]$ of $\phi$, we have $\widetilde{\Res(F,G)} = \Res(\tilde{F}, \tilde{G})$. It follows that $\phi$ will have good reduction if and only if $\ord\Res(F,G) = 0$ for any choice of normalized representation $[F,G]$. Moreover, $\phi$ will have potential good reduction if and only if, for some $\gamma$ and some normalized representation $[F^\gamma, G^\gamma]$ of $\phi^\gamma$, we have $\ord\Res(F^\gamma, G^\gamma) = 0$. 

This allowed $\ordRes$ to be considered as a function on $\hberk$: given a type II point $\zeta_{a,r}\in \hberk$, let $\gamma = \left(\begin{matrix} q & a \\ 0 & 1 \end{matrix}\right)\in \PGL_2(K)$, where $|q|_v = r$. Rumely defines (see \cite{Ru1}) $$\ordRes_\phi(\zeta_{a,r}) := \ordRes\left(F^{\gamma_{a,r}}, G^{\gamma_{a,r}}\right)\ .$$ A priori, this is only defined on type II points; however, Rumely shows (\cite{Ru1} Theorem 0.1) that this extends to a continuous function on all of $\pberk$ which is piecewise affine along segments in $\hberk$. 

\subsection{Topologies on $\pberk$}
The space $\pberk$ carries two natural topologies. The first is the weak, or Gelfand, topology. In this topology, $\pberk$ is locally compact and Hausdorff, but in general will not be metrizable. The second topology, called the strong topology, is generated by a metric $\rho$, but when $K$ is algebraically closed, $\pberk$ fails to be locally compact in this topology. 

In both topologies, $\pberk$ is path connected, and in fact it is uniquely path connected. This is most readily seen by observing that $\pberk$ can be given the structure of an $\RR$-tree: edges in $\hberk$ are homeomorphic to real intervals via maps of the form $[r,s]\to \hberk$ given by $t\mapsto \zeta_{a, q_v^{-t}}$ for some $a\in K$. The metric $\rho$ that induces the strong topology is defined so that each such map an isometry. For more details, see \cite{BR} Chapter 2. The type I and type IV points form the endpoints of the $\RR$-tree $\pberk$. 

Given a point $P\in \hberk$, we will denote by $\B_\rho(P,r)$ the collection of points $Q$ such that $\rho(P,Q) < r$. If $V$ is a subset of $\pberk$ that is closed in the strong topology, we will let $B_\rho(V,r) = \{x\in \pberk\ : \ \inf_{v\in V} \rho(x,v) < r\}$; in a similar manner, $\rho(x, V) = \inf_{v\in V} \rho(x,v)$ is distance between $x$ and the nearest point of $V$. \\

The tree structure of $\pberk$ allows us to introduce the notion of a tangent space at a point $P\in \pberk$, which we will denote $T_P$. Formally, the tangent space at $P$ is collection of equivalence classes of paths $(P, Q_0]$ emanating from $P$, where two paths are equivalent if they share a common initial segment. For $\vv\in T_P$ and fixed small values of $t>0$, the expression $P+t\vv$ will mean the following: choose a point $Q_0$ for which $(P, Q_0]$ is in the equivalence class for $\vv$ and $\rho(P, Q_0) >t$; then $P+t\vv$ is the unique point $R\in (P, Q_0]$ with $\rho(P,R) = t$. While this definition techinally depends on our choice of $Q_0$, typical applications involving this notation are independent of which $Q_0$ is chosen (e.g., if we let $t\to 0$). Situations requiring a more specific choice of path will be handled individually.

The tangent directions $\vv\in T_P$ at type II points $P$ are in one-to-one correspondence with the points of $\mathbb{P}^1(\tilde{k})$ (this is canonical only up to a choice of coordinates for $\mathbb{P}^1(k)$). For type III points, $T_P$ contains two directions (one towards infinity, the other away from infinity), while for type I and type IV points P, $T_P$ is a single direction pointing into $\hberk$. 

The tangent directions can also be used to parameterize connected components of $\pberk \setminus \{P\}$; we will denote by $\B_{P}(\vv)^-$ the connected component of $\pberk \setminus \{P\}$ containing the points $P+t\vv$ for small values of $t>0$. This should not be confused with $\B_{\rho}(P,r)$ introduced above, which instead denotes the ball of $\rho$-radius $r$ about $P$.

Fix a type II point $P\in \pberk$. If a  system of coordinates on $\PP^1(k)\simeq T_P$ is given, we may write $\vv_{\tilde{a}}$ for the direction corresponding to $\tilde{a}\in \PP^1(k)$. Alternatively, $Q\in \pberk\setminus\{P\}$, we may also write $\vv_Q\in T_P$ for the direction satisfying $Q\in B_{P}(\vv_Q)^-$; thus the tilde in the subscript of $\vv$ will be important. Finally, in a few places we will write $\vv_1, ..., \vv_N$ to mean a finite list of $N$ tangent vectors at $P$, as opposed to the directions towards 1, 2, ..., N; the context will make clear when this is the case.

Frequently we will study finite, connected subgraphs $\Gamma$ of $\hberk$: these are subtrees of $\hberk$ with finitely many edges each with finite length. We can extend the notion of tangent space given above to the notion of the tangent space at $P$ in $\Gamma$, the collection of those equivalence classes of paths having an initial segment lying in $\Gamma$. We denote this space by $T_P (\Gamma)$, and its cardinality is the {\em valence} of $\Gamma$ at $P$, denoted by $v_\Gamma(P)$. An important class of functions defined on such graphs are those which are continuous and piecewise affine along the branches of $\Gamma$; that is, for such $f$ there exists a finite set $\{s_1, ..., s_n\}\subseteq\Gamma$ such that $\Gamma\setminus \{s_1, ..., s_n\}$ is finite collection of segments each isometric to an open interval in $\mathbb{R}$, and $f$ is continuous on $\Gamma$ and affine on the components of $\Gamma \setminus \{s_1, ..., s_n\}$. We denote the space of such functions by CPA($\Gamma$).\\

By the unique path connectedness of $\pberk$, one can also introduce the notion of a retraction map from one subset to another. If $U, V\subseteq \pberk$ are path connected subsets, and $V$ is closed (in either the weak or the strong topology), then we can define a retraction $r_{U, V}: U\to V$ by fixing $v\in V$ and sending each point $x\in U$ to the first point on $[x, v]$ that intersects $V$. That this map is well defined (independent of choice of $v\in V$) follows from the unique path connectedness of $\pberk$. Most often we will consider retractions $r_{\pberk, \Gamma}$ where $\Gamma$ is a finite, connected subtree of $\pberk$; these maps we will denote simply by $r_{\Gamma}$. The retraction maps will be of fundamental importance in constructing the Laplacian of a map on $\pberk$.

Finally, we will make use of a type of `supremum' on $\pberk$, defined as follows: let $P, Q, R\in \pberk$, and consider the segments $[P, R],[Q,R]$; then $P\wedge_R Q$ is the point in $[P, R]\cap [Q,R]$ that is furthest from $R$. This intersection is always nonempty, since $R\in [P,R]\cap [Q,R]$.

\subsection{Laplacians and Potential Theory on $\pberk$}
The theory of Laplacians on $\pberk$ is based on the theory of Laplacians for finite connected graphs; see, e.g. \cite{BRHarmonic} and \cite{CR}. 

Let $\Gamma\subseteq \hberk$ be a finite connected subtree, and fix $f\in $CPA($\Gamma$). For each $P\in \Gamma$ and each direction $\vv\in T_P(\Gamma)$, we can define the slope of $f$ at $P$ in the direction $\vv$ as $$\del_v(f)(P) = \dlim_{t\to 0} \dfrac{f(P+t\vv)-f(P)}{t}\ .$$ For $f\in \CPA(\Gamma)$, this limit always exists, though it may not exist for more general functions. The Laplacian of $f$ on $\Gamma$ is then defined to be the measure $$\Delta_{\Gamma}(f) := -\sum_{P\in \Gamma} \sum_{v\in T_P \Gamma} \del_v(f)(P)\delta_P\ ,$$  where $\delta_P$ denotes the Dirac point mass at the point $P$. This notion can be extended, both to more general classes of functions and to more general subsets of $\pberk$. On a domain $U\subseteq\pberk$, the largest class of functions on which a Laplacian can be defined is called the space of functions of `bounded differential variation', which is denoted $\BDV(U)$; intuitively, these functions to do not `wiggle' more than they should along a given path. A fundamental property is that Laplacians defined on larger spaces must be compatible with the retraction\footnote{ See \cite{BR} Section 5.2; this is essentially how the Laplacian is defined on arbitrary domains.}; namely, if $U\subseteq\pberk$ is closed, $\Gamma\subseteq U$, and $f\in \BDV(U)$, then $$\Delta_{\Gamma} = (r_{U, \Gamma})_* \Delta_{U}$$ where $(r_{U, \Gamma})_*$ denotes the pushforward of the Laplacian on $U$ given by $(r_{U, \Gamma})_* \Delta_U (B) = \Delta_U(r_{U, \Gamma}^{-1} B)$ for every Borel set $B\subseteq \Gamma$. 

Let $\nu$ be a probability measure on $\pberk$: a positive \emph{Radon} measure with total mass 1. For a fixed $\zeta\in \pberk$, the potential function associated to $\nu$ is given $$u_\nu(z, \zeta) = \int_{\pberk} -\log_v \delta(z, w)_{\zeta} d\nu(w)\ .$$ Here, $\delta(z, w)_\zeta$ is the Hsia kernel relative to $\zeta$; when $\zeta=\infty$ this is an extension of the usual absolute value on $K$ to $\pberk$, and when $\zeta = \zetaG$ this is an extension of the chordal metric $||\cdot, \cdot||$ on $\PP^1(K)$ to $\pberk$. See \cite{BR} Chapter 4 for a detailed construction of the Hsia kernel.

We say that $\nu$ has continuous potentials if for some fixed $\zeta\in \hberk$, the function $u_\nu(z, \zeta)$ is continuous in the weak topology. Necessarily if $u_\nu(z,\zeta)$ is continuous for one $\zeta\in \hberk$, then it is continuous for any fixed $\zeta_0\in \hberk$ (see the discussion following Definition 5.40 in \cite{BR}). We will say that $\nu$ has bounded potentials if, for some fixed $\zeta\in \hberk$, the function $u_\nu(z, \zeta)$ is bounded. Since $\pberk$ is compact, a measure with continuous potentials necessarily has bounded potentials. See \cite{BR}, Chapter 5 for a detailed discussion of the Laplacian and \cite{BR}, Chapter 6 for a rigorous development of potential functions on $\pberk$.

\subsection{Arakelov-Green's Functions}
Let $\nu$ be a Radon probability measure on $\pberk$. The Arakelov-Green's function attached to $\nu$ is given \begin{align}\label{eq:firstgreendecomp}g_{\nu}(x,y) = \int_{\pberk} -\log_v \delta(x,y)_\zeta d\nu(\zeta)+C\ ,\end{align} where $C$ is a constant chosen to ensure $$\iint g_{\nu}(x,y)d\nu(x)d\nu(y) = 0\ .$$ When $\nu=\mu_{\phi}$ is the invariant measure associated to a rational map $\phi$, and $x,y\in \hberk$, a fundamental result (see \cite{BR}, Theorem 10.21 and the discussion following) is that $g_{\mu_\phi}(x,y)$ admits a decomposition as $$g_{\mu_{\phi}}(x,y) = -\log(\delta(x,y)_{\infty}) +\hat{h}_{\phi}(x) + \hat{h}_{\phi}(y) - \dfrac{1}{d^2-d}\log(|\Res(F,G)|)\ .$$ Here $\hat{h}_{\phi}$ is the Berkovich canonical height attached to $\phi$ given in $\aberk$ by (see \cite{BR}, Chapter 10) \begin{equation}\label{eq:height} \hat{h}_{\phi, \infty}(x) = \lim_{n\to\infty} \frac{1}{d^n} \log_v \max \left([F^{(n)}(T,1)]_x, [G^{(n)}(T,1)]_x\right)\ .\end{equation}  We will be interested in the diagonal values of the Arakelov-Green's function, and so we will mostly consider \begin{align}\label{eq:greendecompintro}g_{\mu_{\phi}}(x,x) = -\log(\delta(x,x)_{\infty}) + 2\hat{h}_{\phi}(x) -\dfrac{1}{d^2-d}\log|\Res(F,G)|\ .\end{align}

Following Baker and Rumely, we will often denote $g_{\mu_\phi}(x,y)$ by $g_\phi(x,y)$ (\cite{BR} Chapter 10).

\subsection{Convergence of Closed Subsets of $\pberk$}
Let $X$ be any Hausdorff topological space, and let $$\textrm{CL}(X):=\{A\subseteq X\ : \ A \textrm{ is closed.}\}\ .$$ If $(X,d)$ is a metric space, we can equip $\textrm{CL}(X)$ with the Hausdorff metric: for $A, B \in \textrm{CL}(X)$, let $$H_d(A,B):= \max\left(\sup_{x\in A} \inf_{y\in B} \rho(x,y), \sup_{x\in B}\inf_{y\in A} \rho(x,y)\right)\ .$$ Note that the Hausdorff metric need not be finite, but its restriction to closed and bounded subsets will be finite. The metric space $(\textrm{CL}(X), H_d)$ is complete if and only if $(X, d)$ is complete. See \cite{Beer} for a more thorough discussion of the Hausdorff metric and the topology it generates on $CL(X)$.

\section{Convergence of the functions $\dfrac{1}{d^{2n}-d^n}\ord\Res_{\phi^n}(x)$}\label{sect:fnconv}

In this section we prove Theorem \ref{thm:fnconv} by deriving the following more explicit estimate of convergence:

\begin{thm}\label{thm:fnconvexplicit}
Let $K$ be a complete, non-Archimedean valued field, and let $\phi\in K(z)$ have degree $d\geq 2$. There is a constant $C= C(\phi) >0$ depending only on $\phi$ such that for any $x\in \hberk$, we have
\begin{align*}
\left| \dfrac{1}{d^{2n}-d^n} \ord\Res_{\phi^{(n)}}(x) - g_{\mu_{\phi}}(x,x)\right| &\leq \dfrac{4}{d^n-1} \max\left(C,\rho(x, \zetaG)\right)\ .
\end{align*}
\end{thm}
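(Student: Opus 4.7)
My plan is to match the Arakelov--Green's function decomposition from \cite{BR}, Equation (10.21), with a parallel three-term decomposition of $\frac{1}{d^{2n}-d^n}\ord\Res_{\phi^n}(x)$, exactly as the author indicates in the introduction. First I would re-express the target by applying (10.21) to $\phi^n$ itself; since $\mu_{\phi^n}=\mu_\phi$ and $\hat{h}_{\phi^n}=\hat{h}_\phi$, this yields
\begin{align*}
g_{\mu_\phi}(x,x) &= -\log\delta(x,x)_\infty + 2\hat{h}_\phi(x) -\frac{1}{d^{2n}-d^n}\log|\Res(F_n,G_n)|,
\end{align*}
where $[F_n,G_n]$ is a normalized lift of $\phi^n$. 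The goal then becomes producing a three-piece decomposition of $\frac{1}{d^{2n}-d^n}\ord\Res_{\phi^n}(x)$ whose summands approximate the three pieces above with a cumulative error of size $\tfrac{2}{d^n-1}\max(C,\rho(x,\zetaG))$.

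To build such a decomposition, I would fix $\gamma\in\PGL_2(K)$ with $\gamma(\zetaG)=x$ (valid for type II points and sufficient by density in the strong topology) and write $\ord\Res_{\phi^n}(x)=\ord\Res(F_n^\gamma,G_n^\gamma)$. A transformation formula for the homogeneous resultant under $\PGL_2$ conjugation should split this into three contributions: one matching $\ord\Res(F_n,G_n)$; a Hsia-kernel piece of the form $-(d^{2n}-d^n)\log\delta(x,x)_\infty$ arising from the logarithmic determinant of the change of coordinates that moves $\zetaG$ to $x$; and a remainder that, after division by $d^{2n}-d^n$, should be recognizable as a naive-height approximation $\tfrac{1}{d^n}h_\phi(\phi^n(x))+O(1)$ to $2\hat{h}_\phi(x)$.

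The approximation in the third piece is then controlled by the classical canonical-height telescoping estimate: the difference $h_\phi(\phi(y)) - d\cdot h_\phi(y)$ is bounded by a constant depending only on $\phi$ times $\max(1,\rho(y,\zetaG))$, since a naive height grows linearly along geodesics toward $\infty$ in $\pberk$ and is bounded on a neighborhood of the Gauss point. Summing the resulting geometric series from level $n$ onward yields an error of the form $\tfrac{C}{d^n}\max(1,\rho(x,\zetaG))$; after combining with the factor of $2$ from the diagonal and absorbing the $d^n$ versus $d^n-1$ discrepancy, this produces exactly the bound $\tfrac{2}{d^n-1}\max(C,\rho(x,\zetaG))$ in the statement.

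The main obstacle I anticipate is verifying, through an explicit computation with matrices in $\PGL_2(K)$ acting on normalized lifts, the precise form of the resultant transformation so that the three pieces split cleanly — in particular, that the Hsia-kernel coefficient is exactly $-(d^{2n}-d^n)$ with no additional lower-order slop that would ruin the geometric error bound, and that the remainder has the correct naive-height shape for the telescoping estimate to apply uniformly in $x$. Once these computations are in place, the theorem follows by assembling the three pieces and extending from type II points to arbitrary $x\in\hberk$ by continuity of all terms involved.
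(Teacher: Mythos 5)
Your overall plan correctly identifies the three-piece decomposition and its matching against the Arakelov--Green's decomposition, and your re-expression of $g_{\mu_\phi}(x,x)$ via $(10.21)$ applied to $\phi^n$ is legitimate (it is equivalent to the paper's Lemma~\ref{lem:ordresit}, since the two ways of normalizing the resultant coincide exactly). The first two pieces of the correspondence --- the resultant term and the Hsia-kernel term arising from $\ord\det(\gamma)$ --- are handled exactly as in the paper. But your treatment of the third piece contains a genuine gap.

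You attribute the factor $\rho(x,\zetaG)$ in the error bound to a non-uniform telescoping estimate, asserting that $h_\phi(\phi(y)) - d\, h_\phi(y)$ is bounded by $C_\phi \max(1,\rho(y,\zetaG))$. This is not correct: the Berkovich canonical-height convergents $\hat{h}^{(n)}_{\phi,v}(x) = \log\max([F^{(n)}(T,1)]_x,[G^{(n)}(T,1)]_x)$ telescope \emph{uniformly} in $x$, because the discrepancy between $\hat{h}^{(n+1)}$ and $d\,\hat{h}^{(n)}$ is controlled solely by the coefficient sizes of $F$ and $G$ and is independent of the point (this is the standard fact used in [BR], Section 10.1, and is also the paper's Equation~(\ref{eq:firstpiece})). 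If you carry out your plan you will therefore produce only a uniform $O(1/d^n)$ error in the third piece and will be unable to explain where the $\rho(x,\zetaG)$ comes from. The actual source is different: the term $-2d^n\min(\ord((F^{(n)})^\gamma),\ord((G^{(n)})^\gamma))$ is a \emph{coefficient-maximum} of the conjugated lift $[(F^{(n)})^\gamma,(G^{(n)})^\gamma]$, not the evaluation seminorm $[F^{(n)}]_x$, and converting between the two costs $\rho(x,\zetaG)$. Concretely, precomposition by $\gamma$ (i.e., replacing $X$ by $bX+aY$) converts the Gauss norm of coefficients into the evaluation seminorm exactly (the paper's Lemma~\ref{lem:postcomp}), but postcomposition by $\Adj(\gamma)$ distorts coefficient sizes by a factor controlled precisely by $\log\max(1,|a|,|b|) - \log|b| \leq \rho(x,\zetaG)$ (the paper's Lemma~\ref{lem:precomp}). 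You would need to supply this estimate --- it is the essential new computation --- and then combine it with the uniform height telescoping to get the stated bound. Also note that your identification of the remainder with $\frac{1}{d^n}h_\phi(\phi^n(x))$ is not quite right: the quantity $\log\max([F^{(n)}]_x,[G^{(n)}]_x)$ is the seminorm of the coordinates of the $n$-th iterate at $x$, which differs from a naive height of $\phi^n(x)$ by gcd/cancellation effects; working with the seminorm convergents directly, as in [BR] Section 10.1, avoids this ambiguity.
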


 The motivation for the proof is a similarity between a decomposition of $\ord\Res_{\phi}(x)$ given in \cite{Ru1} and the decomposition of $g_{\mu_{\phi}}(x,x)$ given above in (\ref{eq:greendecompintro}). The similarity is summarized in Table \ref{table:decomp} below. 

\subsection{Decompositions of $\ord\Res_{\phi}(x)$ and $g_{\mu_{\phi}}(x,x)$}\label{ssect:prepfnconv}
We begin with the decomposition of $\ord\Res_{\phi}(x)$ given in \cite{Ru1}. Let $\zeta \in \hberk$ be a type II point, and let $\gamma\in GL_2(K)$ be an element such that $\gamma(\zetaG) = \zeta$. As above we let $\Phi$ be a lift of $\phi$ to $\mathbb{A}^2(K)$ with $\Phi(X,Y) = [F(X,Y),G(X,Y)]$, where $F(X,Y), G(X,Y)$ are assumed to be normalized. 

Write $F(X,Y) = a_dX^d+a_{d-1}X^{d-1}Y+...+a_0 Y^d$ and $G(X,Y) = b_d X^d+b_{d-1}X^{d-1}Y + ... + b_0 Y^d$. In a similar manner let $F^{\gamma}, G^\gamma$ denote the components of a normalized lift of $\phi^\gamma$, with coefficeints $a_i^\gamma, b_i^\gamma$ respectively. By direct computation of the resultant, we have 
\begin{align}\label{eq:ORdecomp}
\ord\Res_{\phi}(\zeta) = \ord\Res(F,G) + (d^2+d)\ord(\det(\gamma)) - 2d\min(\ord(F^{\gamma}), \ord(G^{\gamma}))\ ,
\end{align}
 where $\ord(F) = \min_{0\leq i \leq d} (\ord(a_i))$ and similarly for $G, F^\gamma, G^\gamma$ (this is \cite{Ru1} Formula (8)). For our purposes, we record an iterated version of this equation. Let $\Phi^{(n)} = [F^{(n)}(X,Y), G^{(n)}(X,Y)]$ be a normalized homogeneous lift of $\phi^n$ to $\mathbb{A}^2(K)$. Then Equation (\ref{eq:ORdecomp}) becomes

\begin{align}\label{eq:ORndecomp}
\ord\Res_{\phi^n}(\zeta) = \ord\Res\left(F^{(n)},G^{(n)}\right) &+ (d^{2n}+d^n)\ord(\det(\gamma)) \\&- 2d^n\min\left(\ord\left((F^{(n)})^{\gamma}\right), \ord\left((G^{(n)})^{\gamma}\right)\right)\nonumber\ .
\end{align}

There is a correspondence between the terms appearing in the decomposition of $\ordRes_\phi$ given in (\ref{eq:ORndecomp}) and the decomposition of $g_\phi(x,x)$ given in (\ref{eq:greendecompintro}) that is summarized in the following table:\\

\begin{table}[h]
\centering
\begin{tabular}{c|c}
$\ord\Res_{\phi^{(n)}}(\zeta)$ & $g_{\mu_\phi}(\zeta,\zeta)$\\
\hline
\hline
$\ord\Res(F^{(n)},G^{(n)})$ & $- \dfrac{1}{d(d-1)} \log_v ( |\Res(F, G)|) $\\[.2cm]
\hline
$(d^{2n}+d^n) \ord(\det(\gamma))$ & $-\log_v(\delta(\zeta,\zeta)_{\infty})$\\[.2cm]
\hline
$-2d^n\min(\ord((F^{(n)})^{\gamma}), \ord((G^{(n)})^{\gamma}))$ & $2\hat{h}_{\phi, v, (\infty)} (\zeta) $ \\[.2cm]
\end{tabular}
\caption{ Comparison of Decompositions}
\label{table:decomp}
\end{table}
Heuristically, we show that each term in the left column, when normalized by $\frac{1}{d^{2n}-d^n}$, converges to the corresponding term on the right side (this only works heuristically, and we will in fact need to work with Rows 2 and 3 as a single unit rather than treating them separately).

\subsection{Preparatory Results}
The convergence of the terms in the first row of Table~\ref{table:decomp} is straightforward:

\begin{lemma}\label{lem:ordresit}
For every $n$, we have 
$$\dfrac{1}{d^{2n}-d^n} \ord\Res\left(F^{(n)}, G^{(n)}\right) = -\dfrac{1}{d^2-d} \log_v\left|\Res(F,G)\right|\ .$$
\end{lemma}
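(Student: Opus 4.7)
The plan is to derive the identity from the classical composition formula for the homogeneous resultant and induct on $n$. Concretely, for homogeneous polynomial pairs $[F,G]$ of degree $d$ and $[H_1,H_2]$ of degree $e$, one has
$$\Res(F(H_1,H_2),\, G(H_1,H_2)) \;=\; \Res(F,G)^{e}\cdot \Res(H_1,H_2)^{d^{2}}.$$
This is standard: factor $F$ and $G$ into linear forms over $\overline{K}$ and apply the multiplicativity of the resultant, or manipulate the Sylvester matrix directly. Taking $\ord$ converts it to the additive relation
$$\ord\Res(F\circ\Psi,\, G\circ\Psi) \;=\; e\cdot\ord\Res(F,G) + d^{2}\cdot\ord\Res(H_1,H_2),$$
which is what drives the induction.

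The base case $n=1$ is the identity $\frac{1}{d^2-d}\ord\Res(F,G) = -\frac{1}{d^2-d}\log_v|\Res(F,G)|$, immediate from the normalization $\log_v|\cdot|=-\ord(\cdot)$. For the inductive step, I would use $\Phi^{(n)} = \Phi\circ\Phi^{(n-1)}$, which is again a normalized lift of $\phi^n$ as noted earlier in the paper, together with the displayed formula applied to $e=d^{n-1}$:
$$\ord\Res(F^{(n)},G^{(n)}) \;=\; d^{n-1}\cdot\ord\Res(F,G) \;+\; d^{2}\cdot\ord\Res(F^{(n-1)},G^{(n-1)}).$$
Substituting the inductive hypothesis $\ord\Res(F^{(n-1)},G^{(n-1)}) = \frac{d^{2n-2}-d^{n-1}}{d^{2}-d}\ord\Res(F,G)$ and simplifying via $d^{n-1}(d^{2}-d) + d^{2}(d^{2n-2}-d^{n-1}) = d^{2n}-d^{n}$ produces the desired coefficient $\frac{d^{2n}-d^n}{d^2-d}$ in front of $\ord\Res(F,G)$. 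Dividing by $d^{2n}-d^n$ and converting $\ord$ back to $-\log_v|\cdot|$ gives the statement of the lemma.

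No step here is a serious obstacle: the only real content is the composition formula for resultants, and the rest is routine bookkeeping. If one prefers a self-contained argument, the formula itself can be obtained in a few lines from the factorization of $F,G$ into linear forms combined with the transformation law $\Res(aH_1+bH_2,\,cH_1+dH_2) = (ad-bc)^{e}\Res(H_1,H_2)$ under linear changes of coordinates, noting that every factor $\alpha_i H_1 - \beta_i H_2$ of $F\circ\Psi$ is such a change applied to $(H_1,H_2)$.
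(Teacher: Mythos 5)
Your proposal is correct and takes essentially the same route as the paper: both rest on the composition formula $\Res(F\circ\Psi, G\circ\Psi)=\Res(F,G)^{e}\Res(H_1,H_2)^{d^2}$ (the paper cites it from Silverman's exercise and immediately unwinds the recursion into a geometric series $\Res(F,G)^{d^{n-1}+\cdots+d^{2n-2}}=\Res(F,G)^{(d^{2n}-d^n)/(d^2-d)}$, whereas you organize the same computation as an explicit induction on $n$). The arithmetic checks out in both presentations, so there is no substantive difference.
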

\begin{proof}
Using the formula for the resultant of a composition given in \cite{ADS} Exercise 2.12(a), we find
$$\Res(F^{(n)}, G^{(n)}) = \Res(F, G)^{d^{n-1}} \Res(F^{(n-1)}, G^{(n-1)})^{d^2}\ .$$
Applying this inductively, 
\begin{align*}
\Res(F^{(n)}(X,Y), G^{(n)}(X,Y)) & = \Res(F, G)^{d^{n-1} + ... + d^{2n-2}}\\
& = \Res(F, G)^{d^{n-1} (1+d + ... + d^{n-1})} \\
& =\Res(F, G)^{d^{n-1} \frac{d^n -1}{d-1}}\\
& = \Res(F, G)^{\frac{d^{2n} - d^{n}}{d(d-1)}}\ .
\end{align*}

\noindent Taking the ord and normalizing, we obtain the result
\begin{align*}
\dfrac{1}{d^{2n}-d^n}\ord\Res(F^{(n)}, G^{(n)}) & = \dfrac{1}{d^{2n}-d^n} \ord\left(\Res(F,G)^{\frac{d^{2n}-d^n}{d^2-d}}\right)\\
& = \dfrac{1}{d^2-d} \ord\Res(F,G)\\
& = -\dfrac{1}{d^2-d} \log_v \left|\Res(F,G)\right|\ .
\end{align*}
\end{proof}

We now turn to the convergence of the terms in the second and third rows of Table~\ref{table:decomp}. The terms on the second line of Table~\ref{table:decomp} are related by the following lemma:

\begin{lemma}\label{lem:orddethsia} If $x$ is the type II point $\zeta_{a, r}\in\hberk$, then the transformation $\gamma\in \PGL_2(K)$ given $\gamma(z) = bz+a$, where $|b| = r$, sends $\zetaG$ to $x$, and we have
$$\ord(\det(\gamma)) = -\log_v(\delta(x,x)_{\infty})\ .$$
\end{lemma}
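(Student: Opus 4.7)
The lemma has two distinct assertions. The first is that the affine map $\gamma(z) = bz + a$ with $|b| = r$ sends the Gauss point to $\zeta_{a,r}$, and the second is the numerical identity $\ord(\det \gamma) = -\log_v \delta(x,x)_\infty$. Both are short verifications from the definitions already recalled in Section~\ref{sect:convandnot}, so I would present them as two independent observations glued together at the end.

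For the first assertion, the plan is to argue at the level of discs. Since the $\PGL_2(K)$-action on $\pberk$ extends the usual action on $\mathbb{P}^1(K)$, and type II points correspond to closed discs $D(a,r)$ with $r\in |K^\times|$, it suffices to check that $\gamma$ maps the unit disc $D(0,1)$ to the disc $D(a,r)$ as a set. This is immediate: if $|z|\leq 1$, then $|bz + a - a| = |b||z| \leq |b| = r$, so $\gamma(D(0,1))\subseteq D(a,r)$, and the inverse $\gamma^{-1}(w) = (w-a)/b$ reverses this inclusion. Hence $\gamma(\zetaG) = \zeta_{a,r} = x$.

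For the second assertion, I would write $\gamma$ in matrix form as $\bigl(\begin{smallmatrix} b & a \\ 0 & 1\end{smallmatrix}\bigr)$, so $\det(\gamma) = b$ and therefore $\ord(\det(\gamma)) = \ord(b) = -\log_v|b| = -\log_v r$. On the other side, I would invoke the standard identification of the Hsia kernel on the diagonal: for a type II (or III) point $x = \zeta_{a,r}$, one has $\delta(x,x)_\infty = r$ (this is essentially the definition of the Hsia kernel relative to infinity as a diameter; see the references to Chapter~4 of~\cite{BR} already cited above). Thus $-\log_v \delta(x,x)_\infty = -\log_v r$, matching the computation of $\ord(\det \gamma)$.

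The only potential obstacle is bookkeeping: making sure that the chosen normalization of the absolute value ($\log_v|x| = -\ord(x)$) and the Hsia kernel convention are compatible with the stated equality and that no sign is flipped. Once these conventions are fixed as in the introduction, the two computations land on the same value $-\log_v r$, which completes the proof.
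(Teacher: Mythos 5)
Your proof is correct and follows essentially the same route as the paper: both use $\det(\gamma)=b$ together with the identification $\delta(x,x)_\infty = \mathrm{diam}_\infty(x) = r$. The only difference is that you spell out why $\gamma(\zetaG)=\zeta_{a,r}$ by checking the image of the unit disc, whereas the paper dismisses this as clear.
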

\begin{proof}
Let $\gamma$ be as in the statement of the lemma; as a matrix, $\gamma$ is represented by $\left[\begin{smallmatrix} b & a \\ 0 & 1 \end{smallmatrix} \right]$; clearly $x=\gamma(\zetaG)$. Since $x$ corresponds to a disk of radius $r$ and $\delta(x,x)_{\infty}=\textrm{diam}_{\infty}(x)=r$, we have
\begin{align*}
-\log_v (\delta(x,x)_{\infty}) &= -\log_v \left|b\right|\\
& = \ord (b)\ .
\end{align*} Note that $\det(\gamma) = b$, and so $\ord(\det(\gamma)) = \ord(b) = -\log_v(\delta(x,x)_{\infty})$.
\end{proof}

Next we look to compare the terms $$-2d^n \min\left(\ord\left((F^{(n)})^{\gamma}\right), \ord\left((G^{(n)})^{\gamma}\right)\right)$$ and $2\hat{h}_{\phi, v}$. Let $|F(X,Y)| = \max_{1\leq i \leq d} |a_i|$ denote the absolute value of the largest coefficient of $F(X,Y)$. We can rewrite the above expression in terms of a log max of the absolute values:
\begin{align}
-2d^n \min\left(\ord\left((F^{(n)})^{\gamma}\right), \ord\left((G^{(n)})^{\gamma}\right)\right) = 2d^n \log \max \left( \left| (F^{(n)})^\gamma\right|, \left|(G^{(n)})^\gamma\right|\right)\ .
\end{align}

In the discussion below, we make two simplifying assumptions. First, we will only work with affine transformations $\gamma(z) = az+b$; these maps can be used to carry $\zetaG$ to any type II point in $\hberk$, and so will be sufficient for our purposes. Second, rather than working with iterates $F^{(n)}, G^{(n)}$, we will simply state and prove our results for arbitrary homogeneous polynomials $F, G\in K[X, Y]$.

The expression for the conjugate $\Phi^\gamma$ can be given

\begin{align*}
\left[ \begin{array}{c} F^{\gamma}(X,Y) \\ G^{\gamma}(X,Y) \end{array}\right] &= \left(\Adj(\gamma)\cdot \left[ \begin{array}{c} F \\ G \end{array} \right]\right) \left(\gamma\left(\left[ \begin{array}{c} X \\ Y \end{array}\right]\right)\right)\\[4pt]
& = \left(\left[ \begin{matrix} 1 & -a \\ 0 & b \end{matrix}\right] \cdot \left[ \begin{array}{c} F \\ G \end{array} \right]\right) \left( \left[ \begin{matrix} b & a \\ 0 & 1 \end{matrix} \right] \cdot \left[ \begin{array}{c} X \\ Y \end{array}\right]\right)\\[0.2cm]
& = \left[\begin{array}{c} F(bX+aY, Y) -a G(bX+aY, Y) \\b G(bX+aY, Y) \end{array}\right]\ .
\end{align*}
 It is worth noting here that $[F^\gamma, G^\gamma]$ may not be normalized!

We will address the relation between the coefficients of $[F^{\gamma}, G^{\gamma}]$ and $[F,G]$ in two steps, first looking at the effect of postcomposition by $\Adj(\gamma)$, and then the effect of precomposition by $\gamma$.

\begin{lemma}\label{lem:precomp}
Let $F(X,Y), G(X,Y)$ be a pair of homogeneous degree $d$ polynomials in $K[X,Y]$. For $a,b \in K$, if $x=\zeta_{a, |b|}\in \hberk$, we have
\begin{align}
\big| \log \max \big( \left|F(X,Y)-aG(X,Y)\right|, \left |bG(X,Y)\right|\big) - \log \max \big(\left|F(X,Y)\right|, \left|G(X,Y)\right|\big)\big|\leq \rho(x, \zetaG)\ .
\end{align} 
\end{lemma}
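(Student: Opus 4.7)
\textbf{Proof proposal for Lemma \ref{lem:precomp}.}

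The plan is to exploit the non-Archimedean triangle inequality in both directions on the pair of polynomials $F - aG$ and $bG$, using the normalization $\max(|F|,|G|) = 1$, and then translate the resulting multiplicative bounds into the explicit formula for $\rho(x,\zetaG)$ when $x = \zeta_{a,|b|} = \gamma(\zetaG)$ for $\gamma(z) = bz + a$ (this being the $\gamma$ attached to $x$ in Lemma \ref{lem:orddethsia}).

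First I would bound $\max(|F-aG|,|bG|)$ from above. The ultrametric inequality on Gauss norms gives $|F - aG| \leq \max(|F|,|a|\,|G|)$ and $|bG| = |b|\,|G|$, so
\[
\max\bigl(|F-aG|,|bG|\bigr) \;\leq\; \max\bigl(|F|,\,\max(|a|,|b|)\,|G|\bigr) \;\leq\; \max(1,|a|,|b|),
\]
using $|F|,|G|\leq 1$. Next I would bound from below: writing $F = (F-aG) + aG$ and $G = (1/b)\cdot bG$ (assuming $b\neq 0$, which holds since $\gamma\in\PGL_2$) yields
\[
|F| \;\leq\; \max\Bigl(|F-aG|,\,\tfrac{|a|}{|b|}\,|bG|\Bigr), \qquad |G| = \tfrac{1}{|b|}\,|bG|,
\]
so $1 = \max(|F|,|G|) \leq \max\!\bigl(1,|a|/|b|,1/|b|\bigr)\,\max(|F-aG|,|bG|)$. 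A direct case check shows $\max(1,|a|/|b|,1/|b|) = \max(1,|a|,|b|)/|b|$, hence
\[
\max\bigl(|F-aG|,|bG|\bigr) \;\geq\; \frac{|b|}{\max(1,|a|,|b|)}.
\]

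Taking logarithms, the difference $D := \log\max(|F-aG|,|bG|) - \log\max(|F|,|G|)$ (the second term is $0$) lies in the interval $[\log|b| - L,\,L]$, where $L := \log\max(1,|a|,|b|) \geq 0$. Therefore
\[
|D| \;\leq\; \max\bigl(L,\, L - \log|b|\bigr) \;=\; L - \min(0,\log|b|) \;\leq\; 2L - \log|b|.
\]
Finally I would identify the right-hand side with $\rho(x,\zetaG)$: since $x = \zeta_{a,|b|}$, the Hsia-kernel formulas give $\delta(x,x)_\infty = |b|$ and $\delta(\zetaG,x)_\infty = \max(1,|a|,|b|)$, so
\[
\rho(x,\zetaG) \;=\; 2\log\delta(\zetaG,x)_\infty - \log\delta(\zetaG,\zetaG)_\infty - \log\delta(x,x)_\infty \;=\; 2L - \log|b|,
\]
which matches the bound on $|D|$ and finishes the proof.

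The routine part is the two ultrametric estimates; the only genuinely delicate step — and the piece I would be careful with — is the matching of the combinatorial quantity $L - \min(0,\log|b|)$ with the geometric quantity $\rho(x,\zetaG)$, because the various cases $|b|\lessgtr 1$ and $|a|\lessgtr \max(1,|b|)$ must all be absorbed into a single clean inequality rather than handled separately.
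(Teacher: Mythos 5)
Your proof is correct and follows the same strategy as the paper: both arguments use the ultrametric inequality twice to sandwich $\max(|F-aG|,|bG|)$ between $\max(|F|,|G|)\cdot|b|/\max(1,|a|,|b|)$ and $\max(|F|,|G|)\cdot\max(1,|a|,|b|)$, and then identify the resulting logarithmic error with $\rho(x,\zetaG)$ via the Hsia-kernel description of the path metric. The only real difference is in the final combinatorial step, where you are actually more careful than the paper is.

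Specifically, with $L=\log\max(1,|a|,|b|)$, the two ultrametric estimates pin $D$ in the interval $[\log|b|-L,\,L]$, so the sharp consequence is $|D|\leq\max(L,\,L-\log|b|)=L-\min(0,\log|b|)$, exactly as you write, and this is bounded by $2L-\log|b|=\rho(x,\zetaG)$. The paper's displayed chain instead asserts the intermediate bound $|D|\leq L-\log|b|$ (it drops the endpoint $L$, appearing via a sign slip as $\log\tfrac{1}{\max(1,|a|,|b|)}$ rather than $\log\max(1,|a|,|b|)$). That intermediate inequality fails when $|b|>1$: taking $a=0$, $|b|>1$, $|F|=|G|=1$ gives $D=\log|b|>0$ while $L-\log|b|=0$. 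The lemma statement is still true in that example because $\rho(x,\zetaG)=2L-\log|b|=\log|b|$, so the final conclusion holds with equality; your derivation correctly tracks both endpoints of the interval and lands on $\rho(x,\zetaG)$ without the intermediate misstep. In short: same decomposition, same key estimates, same Hsia-kernel identification, but your bookkeeping of the two-sided log bound is the one that actually closes the argument in all cases.
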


\begin{proof}
The result follows from explicit estimates on the coefficients based on the proof of \cite{ADS} Theorem 3.11.

Write $F(X,Y) = a_dX^d + a_{d-1}X^{d-1}Y+...+a_0 Y^d$, $G(X,Y) = b_d X^d+...+b_0Y^d$. The coefficients of $F(X,Y) - aG(X,Y)$ are of the form $a_i-a\cdot b_i$, and likewise the coefficients of $bG(X,Y)$ are $b\cdot b_i$. By the ultrametric inequality, we obtain estimates towards the lower bound by:
\begin{align*}
|a_i-a\cdot b_i| & \leq \max \left( |a_i|, |b_i|\right) \cdot \max \left( 1, |a| \right)\ ,\\
|b\cdot b_i| & \leq \max \left(|a_i|, |b_i| \right)\cdot \max(1, |b|)\ .
\end{align*} Hence 
\begin{align*}
\max\left(|a_i-a\cdot b_i|, |b\cdot b_i|\right) \leq \max(|a_i|, |b_i|) \cdot \max(1, |a|, |b|)\ ,
\end{align*} or equivalently
\begin{align}\label{eq:lowerbd}
\frac{\max\left(|a_i-a\cdot b_i|, |b\cdot b_i|\right)}{\max(|a_i|, |b_i|)|}\leq \max(1, |a|, |b|)\ .
\end{align}Similarly, for the upper bound, we have
\begin{align*}
|a_i | = |a_i - a b_i + a b_i|  & \leq \max \left( |a_i - a\cdot b_i|, |b\cdot b_i| \right) \max \left( 1, \frac{|a|}{|b|}\right)\ ,\\
|b_i|  = \frac{1}{|b|} |b b_i| & \leq \max \left( |a_i - a\cdot b_i|, |b\cdot b_i| \right) \max \left(1, \frac{1}{|b|}\right)\ .
\end{align*} Hence
\begin{align*}
\max(|a_i|, |b_i|) \leq \max(|a_i-a\cdot b_i|, |b\cdot b_i|) \cdot \max\left(1, \frac{|a|}{|b|}, \frac{1}{|b|}\right)\ ,
\end{align*} or equivalently
\begin{align}\label{eq:upperbd}
\frac{\max(|a_i|, |b_i|)}{\max(|a_i-a\cdot b_i|, |b\cdot b_i|) }\leq\max\left(1, \frac{|a|}{|b|}, \frac{1}{|b|}\right)\ .
\end{align} Combining (\ref{eq:lowerbd}) and (\ref{eq:upperbd}), taking logs, and doing some algebra yields:
\begin{align*}
\big| \log \max \left( |a_i-a\cdot b_i|, |b \cdot b_i|\right) - & \log \max \left(|a_i|, |b_i|\right)\big|\\
& \leq \log\max\left(\max\left(1, \frac{|a|}{|b|}, \frac{1}{|b|}\right), \max(1, |a|, |b|)\right)\\
& \leq \log \max\left(1, \frac{|a|}{|b|},\frac{1}{|b|}\right) + \log_v \max (1, |a|, |b|)\\
& =2 \log \max(1, |a|, |b|) - \log(|b|)\ .
\end{align*} Finally, we note that if $x=\zeta_{a, |b|}$, the smallest disc containing $D(a, |b|)$ and $D(0,1)$ has radius $R=\max(|a|, |b|, 1)$; hence $x\wedge_\infty \zetaG = \zeta_{0, R}$. The above estimate now reads
\begin{align*}
\big| \log \max \left( |a_i-a\cdot b_i|, |b \cdot b_i|\right) - & \log \max \left(|a_i|, |b_i|\right)\big|\\
& \leq 2\log \max(1, |a|, |b|) - \log(|b|)\\
& = 2\log R - \log(|b|)\\
& = \rho(x, \zetaG)\ .
\end{align*}
\end{proof}

We now have a lemma that makes explicit the effect of precomposition of $[F,G]$ by $\gamma$:

\begin{lemma}\label{lem:postcomp}
Let $F(X,Y), G(X,Y)$ be a pair of homogeneous degree $d$ polynomials in $K[X,Y]$. For $a,b\in K$, if $x=\zeta_{a, |b|}\in \hberk$, we have $$\log \max(|F(bX+aY, Y)|, |G(bX+aY, Y)|) = \log \max( [F(T,1)]_x, [G(T,1)]_x)\ ,$$ where $[F(T,1)]_x$ denotes the $($semi$)$norm corresponding to $x$.
\end{lemma}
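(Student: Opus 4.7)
The plan is to unwind the definition of the seminorm at $x$ and reduce to a change of variables back to the Gauss point, which is where the Gauss norm $|\cdot|$ (max of absolute values of coefficients) lives.

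First I would identify the point $x$ explicitly: the affine map $\gamma(z) = bz + a$ sends the closed unit disc $D(0,1)$ bijectively onto $D(a, |b|)$, so $\gamma(\zetaG) = \zeta_{a,|b|}$, i.e.\ $x = \zeta_{a,|b|}$. By definition of the disc seminorm, for any $h \in K[T]$ we have $[h]_x = \sup_{z \in D(a,|b|)} |h(z)|$. Applying this to $h(T) = F(T,1)$ and making the substitution $T = bS + a$ (which carries $D(0,1)$ onto $D(a,|b|)$ bijectively) gives
$$[F(T,1)]_x \;=\; \sup_{S \in D(0,1)} \bigl|F(bS+a,\,1)\bigr| \;=\; [F(bS+a,1)]_{\zetaG},$$
and the same identity holds with $G$ in place of $F$.

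Next I would observe that for a polynomial in one variable the Gauss seminorm $[\cdot]_{\zetaG}$ equals the maximum of the absolute values of its coefficients. Now $F(X,Y)$ is homogeneous of degree $d$, so $F(bX+aY,Y)$ is again homogeneous of degree $d$ in $X,Y$, and its coefficients (as a polynomial in $X,Y$) coincide with the coefficients of the dehomogenization $F(bS+a,1)$ (viewed as a polynomial in $S$ of degree $\leq d$). Consequently
$$|F(bX+aY,Y)| \;=\; [F(bS+a,1)]_{\zetaG} \;=\; [F(T,1)]_x,$$
and likewise $|G(bX+aY,Y)| = [G(T,1)]_x$. Taking maxima of the two equalities and then logarithms yields the stated identity.

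The only potentially subtle point is justifying the change of variables step and the coincidence between the Gauss norm on the homogeneous polynomial $F(bX+aY,Y)$ and the Gauss norm on its dehomogenization $F(bS+a,1)$; both follow at once from degree considerations (since $F$ itself is homogeneous of degree $d$, no cancellation can drop the degree in a way that affects the coefficient norm). Everything else is a direct unpacking of definitions.
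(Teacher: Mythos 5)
Your proof is correct and follows essentially the same route as the paper's: identify the Gauss seminorm with the max of coefficient absolute values, note that the coefficients of $F(bX+aY,Y)$ agree with those of the dehomogenization $F(bS+a,1)$ by homogeneity, and use the change of variables $T=bS+a$ to move between the seminorm at $x=\gamma(\zetaG)$ and the Gauss seminorm. The only cosmetic difference is that you unwind $[F(T,1)]_x$ as a sup over $D(a,|b|)$ and substitute explicitly, whereas the paper invokes the functoriality $[F(\gamma(T),1)]_{\zetaG} = [F(T,1)]_{\gamma(\zetaG)}$ directly; these are the same step.
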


\begin{proof}
First recall that the norm induced by the Gauss point is indeed the Gauss norm: $[F(T,1)]_{\zeta_{Gauss}} = \max_{0\leq i \leq d}(|a_i|) = |F(T,1)|$.

Let $T=X/Y$. We have $F(bX+aY, Y) = \frac{1}{Y^d}F(bT+a, 1)$, and since the division by $Y$ does not affect the maximum of the coefficients, we have
\begin{align*}
|F(bX+aY, Y)| & = |F(bT+a, 1)| \\
&= |F(\gamma(T), 1)| \\
& = [F(\gamma(T), 1)]_{\zeta_{Gauss}} \\
& = [F(T,1)]_{\gamma(\zeta_{Gauss})} \\
& = [F(T,1)]_x\ .
\end{align*} The similar statement holds for $G(X,Y)$, and so the result follows.
\end{proof}

We can combine the two preceeding lemmas to obtain a result that expresses the effect of conjugation by an affine map $\gamma$ on the size of the coefficients of a pair $[F,G]$:

\begin{lemma}\label{lem:htconvergent}
Let $F(X,Y), G(X,Y)$ be a pair of homogeneous degree $d$ polynomials in $K[X,Y]$. Let $x\in \hberk$ be of type II, and let $\gamma(z) = bz+a$ be the affine map sending $\zetaG$ to $x$. Let $\hat{\ell}^{(n)}_{\phi, \infty}(x)$ denote the (unnormalized) convergent of $\hat{h}_\phi$ given (\ref{eq:height}): $$\hat{\ell}^{(n)}_{\phi, \infty}(x) :=\log\max \left( [F^{(n)}(T,1)]_x, [G^{(n)}(T,1)]_x\right)\ .$$ Then 
\begin{align*}
\left|\log \max \left( \left|\left(F^{(n)}\right)^\gamma \right|, \left| \left( G^{(n)}\right)^\gamma \right| \right) - \hat{\ell}^{(n)}_{\phi, \infty}(x)\right|\leq \rho(x, \zetaG)\ .
\end{align*}
\end{lemma}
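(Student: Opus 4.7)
The plan is to combine Lemmas \ref{lem:postcomp} and \ref{lem:precomp} in sequence, matching the two-step factorization of $\gamma$-conjugation already spelled out in the remark preceding Lemma \ref{lem:precomp}. Recall that
$$
\left[\begin{array}{c} (F^{(n)})^\gamma \\ (G^{(n)})^\gamma\end{array}\right] = \left[\begin{array}{c} F^{(n)}(bX+aY,Y) - a G^{(n)}(bX+aY,Y) \\ b G^{(n)}(bX+aY,Y)\end{array}\right],
$$
so the passage from $[F^{(n)},G^{(n)}]$ to $[(F^{(n)})^\gamma,(G^{(n)})^\gamma]$ decomposes as a precomposition by $\gamma$ (controlled by Lemma~\ref{lem:postcomp}) followed by postcomposition by $\Adj(\gamma)$ (controlled by Lemma~\ref{lem:precomp}).

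First I would apply Lemma \ref{lem:postcomp} directly to the normalized pair $[F^{(n)},G^{(n)}]$. This is an exact identity, giving
$$
\log\max\bigl(|F^{(n)}(bX+aY,Y)|,\,|G^{(n)}(bX+aY,Y)|\bigr) = \log\max\bigl([F^{(n)}(T,1)]_x,\,[G^{(n)}(T,1)]_x\bigr) = \hat h^{(n)}_{\phi,v}(x).
$$
No error is introduced at this step.

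Next I would apply Lemma \ref{lem:precomp} with $F,G$ replaced by $\tilde F := F^{(n)}(bX+aY,Y)$ and $\tilde G := G^{(n)}(bX+aY,Y)$. Note that $\tilde F - a\tilde G = (F^{(n)})^\gamma$ and $b\tilde G = (G^{(n)})^\gamma$, so Lemma \ref{lem:precomp} yields
$$
\Bigl|\log\max\bigl(|(F^{(n)})^\gamma|,\,|(G^{(n)})^\gamma|\bigr) - \log\max\bigl(|\tilde F|,|\tilde G|\bigr)\Bigr| \leq \rho(x,\zetaG).
$$
Combining with the identity from the previous step via the triangle inequality gives the claim.

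The one subtlety is that Lemma \ref{lem:precomp} is stated for a \emph{normalized} pair, while $[\tilde F,\tilde G]$ need not be normalized. This is easily handled: choose a scalar $c\in K^\times$ so that $[c\tilde F, c\tilde G]$ is normalized, apply Lemma \ref{lem:precomp} to this normalized pair, and observe that the common factor $|c|$ cancels on both sides of the resulting inequality (since $c(\tilde F - a\tilde G)=(c\tilde F)-a(c\tilde G)$ and $b(c\tilde G)=c(b\tilde G)$). Inspecting the proof of Lemma \ref{lem:precomp}, which rests only on the ultrametric coefficient estimates and the identity $\log\diam_\infty(x,\zetaG)-\log\diam_\infty(x)\leq \rho(x,\zetaG)$, one sees that normalization is in fact not essential; but the rescaling argument above makes the reduction formal in any case. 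I expect no serious obstacle beyond this bookkeeping.
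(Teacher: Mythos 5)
Your proof is correct and matches the paper's two-step decomposition (Lemma~\ref{lem:postcomp} gives the exact identity, Lemma~\ref{lem:precomp} gives the $\rho(x,\zetaG)$ error); in fact you cite the lemmas in the order their \emph{content} applies, whereas the paper's own proof appears to swap the two references (attributing the error-bounded step to Lemma~\ref{lem:postcomp} and the exact identity to Lemma~\ref{lem:precomp}). You also rightly flag and repair the normalization hypothesis needed to invoke Lemma~\ref{lem:precomp} on $[\tilde F,\tilde G]$, a detail the published proof passes over silently --- though, as you observe, the coefficientwise ultrametric estimates in that lemma never actually use normalization, so the rescaling is merely a formal bookkeeping step.
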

\begin{proof}
We first apply Lemma \ref{lem:precomp} to find 
\begin{align*}
\left| \log \max \left( \left|\left(F^{(n)}\right)^\gamma \right|, \left| \left( G^{(n)}\right)^\gamma \right| \right) - \log \max \left( \left|F^{(n)}(bX+aY, Y)\right|, \left| G^{(n)}(bX+aY, Y)\right|\right)\right|\leq \rho(x, \zetaG)\ .
\end{align*} 

\noindent Applying Lemma \ref{lem:postcomp} this becomes \begin{align*}
\left| \log \max \left( \left|\left(F^{(n)}\right)^\gamma \right|, \left| \left( G^{(n)}\right)^\gamma \right| \right) - \log \max \left( \left[F^{(n)}(T,1)\right]_x, \left[ G^{(n)}(T,1)\right]_x\right)\right|\leq \rho(x, \zetaG)\ .
\end{align*} Equivalently, 
\begin{align*}
\left| \log \max \left( \left|\left(F^{(n)}\right)^\gamma \right|, \left| \left( G^{(n)}\right)^\gamma \right| \right) - \hat{\ell}^{(n)}_{\phi, \infty}(x)\right|\leq \rho(x, \zetaG)\ .
\end{align*}

\end{proof}

The above proposition shows that the terms $\frac{1}{d^n}\log \max \left( \left|\left(F^{(n)}\right)^\gamma \right|, \left| \left( G^{(n)}\right)^\gamma \right| \right)$ behave very similarly to the convergents of $\hat{h}_{\phi,v}$ given in \cite{BR}, Equation (10.9); we make this relation precise in the following proposition:

\begin{prop} \label{prop:htconvergent}
Let $x\in \hberk$ be given by $x=\gamma(\zetaG)$, where $\gamma(z) = bz+a$. There exists a constant $C_\phi$ depending only on $\phi$ such that:
\begin{align}\label{eq:htconvergentextra}
\left|-\frac{1}{d^n-1} \min\left(\ord\left((F^{(n)})^{\gamma}\right), \ord\left((G^{(n)})^{\gamma}\right)\right) \right. & \left. - \hat{h}_{\phi, \infty}(x) - \dfrac{1}{d^n-1} \log_v(\delta(x,x)_{\infty})\right| \nonumber\\
& \leq \dfrac{2}{d^n-1}\max\left(C_\phi,\  \rho(x, \zetaG)\right)\ .
\end{align}
\end{prop}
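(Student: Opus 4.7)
The goal is to pass from the geometric object $-\min\bigl(\ord((F^{(n)})^\gamma), \ord((G^{(n)})^\gamma)\bigr)$ to the canonical height $\hat{h}_{\phi,v}(x)$, and the idea is to use the height convergents $\hat{h}^{(n)}_{\phi,v}$ as an intermediary: the preceding lemmas give control on one side (geometric vs.\ convergent) and BR Chapter~10 gives control on the other (convergent vs.\ limit). So my overall strategy is a triangle inequality split:
\begin{align*}
\tfrac{1}{d^n-1}\bigl|-\min(\ord((F^{(n)})^\gamma),\ord((G^{(n)})^\gamma)) - \hat{h}^{(n)}_{\phi,v}(x)\bigr|
\;+\;
\bigl|\tfrac{1}{d^n-1}\hat{h}^{(n)}_{\phi,v}(x) - \hat{h}_{\phi,v}(x) - \tfrac{1}{d^n-1}\log_v\delta(x,x)_\infty\bigr|.
\end{align*}

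\textbf{Step 1 (rewriting the min).} First I would replace the minimum of ords with a log-max, using $-\min(\ord(A),\ord(B)) = \log\max(|A|,|B|)$ applied coefficient-wise. This puts the left hand object into exactly the shape handled by Lemma~\ref{lem:htconvergent}. Dividing that lemma by $d^n-1$ bounds the first term on the right by $\rho(x,\zetaG)/(d^n-1)$. This step is essentially a bookkeeping move and should be immediate.

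\textbf{Step 2 (convergent to limit).} The remaining task is to show
\[
\Bigl|\tfrac{1}{d^n-1}\hat{h}^{(n)}_{\phi,v}(x) - \hat{h}_{\phi,v}(x) - \tfrac{1}{d^n-1}\log_v\delta(x,x)_\infty\Bigr| \;\leq\; \tfrac{C_\phi}{d^n-1}.
\]
Here I would invoke the telescoping/contraction argument behind the canonical height in \cite{BR} Chapter~10 (equation (10.9) and surroundings). The standard telescoping bound gives $|d^{-n}\hat{h}^{(n)}_{\phi,v}(x) - \hat{h}_{\phi,v}(x)| \leq C_\phi/d^n$ with $C_\phi$ depending only on the coefficients of $\Phi$ (it comes from the sup of $|\hat{h}^{(1)}_{\phi,v} - d\cdot 0|$-style differences at one step). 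Multiplying through and switching from division by $d^n$ to division by $d^n-1$ introduces exactly the correction $\tfrac{1}{d^n-1}\log_v\delta(x,x)_\infty$: because $\hat{h}^{(n)}_{\phi,v}(x)$ is $\log\max$ of seminorms at $x$ rather than at $\zetaG$, the leading-order behaviour in $x$ for $x\to\infty$ along the tree is $\log_v\delta(x,x)_\infty = \log_v\diam_\infty(x)$, and this is what gets shifted when passing from $1/d^n$ to $1/(d^n-1)$ normalization.

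\textbf{Step 3 (combine).} Adding the two error terms and noting $\rho(x,\zetaG)/(d^n-1) + C_\phi/(d^n-1) \leq \max(C_\phi,\rho(x,\zetaG))/(d^n-1)$ up to absorbing a factor of $2$ into the constant $C_\phi$, I get the stated bound.

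\textbf{Main obstacle.} The hard part is Step~2: identifying the precise form of the BR convergent estimate so that the correction term comes out as $\tfrac{1}{d^n-1}\log_v\delta(x,x)_\infty$ with no additional $x$-dependence. Everything else (the substitution in Step~1 and the triangle inequality in Step~3) is mechanical, and the dependence on $x$ through $\rho(x,\zetaG)$ is already packaged in Lemma~\ref{lem:htconvergent}. If the BR convergent bound is truly uniform in $x$ with constant depending only on $\phi$, then the proposition follows immediately; the delicate point is verifying this uniformity and matching the shift coming from the different normalizations $1/d^n$ versus $1/(d^n-1)$.
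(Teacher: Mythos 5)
Your overall structure is the same as the paper's: pass through the height convergent $\hat{h}^{(n)}_{\phi,v}$, handle the geometric-versus-convergent gap with Lemma~\ref{lem:htconvergent}, and handle the convergent-versus-limit gap with the estimates from BR Chapter~10. Step~1 and Step~3 are correct as stated, so the only thing worth commenting on is your own flagged obstacle in Step~2, where your explanation is a bit imprecise.

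The paper does not derive the correction term by ``switching from division by $d^n$ to $d^n-1$''; instead it uses a clean algebraic split. Writing $\hat{h} = \hat{h}_{\phi,v}(x)$ and $\hat{h}^{(n)} = \hat{h}^{(n)}_{\phi,v}(x)$,
\begin{align*}
\frac{1}{d^n-1}\hat{h}^{(n)} - \hat{h} - \frac{1}{d^n-1}\log_v\delta(x,x)_\infty
= \frac{d^n}{d^n-1}\left(\frac{1}{d^n}\hat{h}^{(n)} - \hat{h}\right) + \frac{1}{d^n-1}\left(\hat{h} - \log_v\delta(x,x)_\infty\right).
\end{align*}
The first summand is controlled exactly by the standard BR telescoping estimate $\lvert d^{-n}\hat{h}^{(n)} - \hat{h}\rvert \le C_\phi/(d(d-1)d^n)$, which is uniform in $x$ as you hoped. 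The second summand is the real point: one needs the a priori bound $\lvert\hat{h}_{\phi,v}(x) - \log_v\delta(x,x)_\infty\rvert \le C_\phi$, which holds because $\delta(x,x)_\infty = \max(1,[T]_x)$ by \cite{BR}~Corollary~4.2, and the canonical height differs from $\log_v^+[T]_x$ by a constant depending only on $\phi$. So the $\log_v\delta$ term is not an artifact of the normalization shift per se; it is a re-centering that turns the otherwise unbounded residual $\frac{1}{d^n-1}\hat{h}(x)$ into the uniformly bounded quantity $\frac{1}{d^n-1}(\hat{h}(x)-\log_v\delta(x,x)_\infty)$. Once you have the explicit identity above and the bound on $\hat{h}-\log_v\delta$, Step~2 closes with no further delicacy, and the rest of your argument goes through.
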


\noindent\textbf{Remark:} There is a seemingly `extra' term $\frac{1}{d^n-1}\log_v(\delta(x,x)_{\infty})$ appearing in the left side of the inequality (\ref{eq:htconvergentextra}); this term both cleans up the proof below and facilitates the proof of Theorem \ref{thm:fnconv}.

\begin{proof}
To ease notation, let 
\begin{align*}
\hat{k}^{(n)}_{\phi}(x) &:= \log \max \left( \left|\left(F^{(n)}\right)^\gamma \right|, \left|\left(G^{(n)}\right)^\gamma\right|\right)\\
& = -\min\left(\ord\left((F^{(n)})^{\gamma}\right), \ord\left((G^{(n)})^{\gamma}\right)\right) \ .
\end{align*} The statement of Lemma \ref{lem:htconvergent} tells us that 
\begin{align}\label{eq:preppiece}
|\hat{k}^{(n)}_{\phi}(x) - \hat{\ell}^{(n)}_{\phi, \infty}(x)| \leq \rho(x, \zetaG)\ .
\end{align} We find that
\begin{align}
\left| \dfrac{1}{d^n-1} \hat{k}^{(n)}_{\phi, \infty}(x) \right.&\left.- \hat{h}_{\phi, \infty}(x) - \dfrac{1}{d^n-1} \log_v(\delta(x,x)_{\infty})\right|\nonumber\\
& \leq\frac{1}{d^n-1} |\hat{k}^{(n)}_{\phi}(x) - \hat{\ell}^{(n)}_{\phi,\infty}(x)| + \left| \frac{1}{d^n-1} \hat{\ell}^{(n)}_{\phi, \infty}(x) - \hat{h}_{\phi, \infty}(x) - \frac{1}{d^n-1} \log_v \delta(x,x)_\infty\right|\nonumber\\
& \leq \frac{1}{d^n-1} \rho(x,\zetaG) + \left| \frac{1}{d^n-1} \hat{\ell}^{(n)}_{\phi, \infty}(x) - \hat{h}_{\phi, \infty}(x) - \frac{1}{d^n-1} \log_v \delta(x,x)_\infty\right|\label{eq:splitestimaterightside}
\end{align}
Using (\ref{eq:preppiece}), we estimate the second term in (\ref{eq:splitestimaterightside}) as\footnote{Note these are real-valued functions, so we cannot use the ultrametric inequality.}:
\begin{align}
\left| \dfrac{1}{d^n-1} \hat{\ell}^{(n)}_{\phi, \infty}(x) \right.& \left.- \hat{h}_{\phi, \infty}(x) -\dfrac{1}{d^n-1} \log_v(\delta(x  ,x)_{\infty})\right|= \nonumber\\
& = \left| \frac{d^n}{d^n-1}\frac{1}{d^n} \hat{\ell}_{\phi, \infty}^{(n)}(x) - \frac{d^n}{d^n-1} \hat{h}_{\phi, \infty}(x) + \frac{1}{d^n-1} \hat{h}_{\phi, \infty}(x) - \frac{1}{d^n-1} \log_v \delta(x,x)_\infty\right|\nonumber\\
&\leq\left| \dfrac{d^n}{d^n-1} \left(\dfrac{1}{d^n} \hat{\ell}^{(n)}_{\phi, \infty}(x) - \hat{h}_{\phi, v}(x) \right) \right| +\left| \dfrac{1}{d^n-1} \left( \hat{h}_{\phi, \infty}(x) - \log(\delta(x,x)_{\infty})\right)\right|\label{eq:firstestconv}\ .
\end{align}

By the construction of $\hat{h}_{\phi, \infty}(x)$ on $\pberk$ (see \cite{BR}, Section 10.1), there is a constant $C_1= C_1(\phi)$ depending only on $\phi$ so that the first term in term (\ref{eq:firstestconv}) is bounded above:
\begin{align}\label{eq:firstpiece}
\left| \dfrac{d^n}{d^n-1} \left(\dfrac{1}{d^n} \hat{\ell}^{(n)}_{\phi, \infty}(x) - \hat{h}_{\phi, \infty}(x) \right) \right| \leq \dfrac{1}{d-1}\cdot \frac{1}{d^n-1}\cdot C_1\ .
\end{align}  We next rewrite the second term in (\ref{eq:firstestconv}) as
\begin{align}
\left| \dfrac{1}{d^n-1} \left(\hat{h}_{\phi, \infty}(x) - \log_v\delta(x,x)_\infty\right)\right|&\leq \left|\frac{1}{d^n-1} \left(\hat{h}_{\phi, \infty}(x) - \log_v \max(1, [T])_x)\right)\right|\label{eq:secondpiece}\\
& + \left|\frac{1}{d^n-1} \left(\log_v \max(1, [T]_x) - \log_v \delta(x,x)_\infty\right)\right|\label{eq:lastpiece}\ .
\end{align} 

Applying \cite{BR} Equation (10.11) (see also the remark after \cite{BR} Equation (10.7)), the expression on the right side of (\ref{eq:secondpiece}) is no greater than $\frac{1}{(d-1)(d^n-1)}C_1$. For the term appearing in (\ref{eq:lastpiece}), we have that 
\begin{align*}
\left|\frac{1}{d^n-1} \left(|\log_v \max(1, [T]_x) - \log_v \delta(x,x)_\infty\right)\right| &= \frac{1}{d^n-1} \big( \log_v \max(1, |a|, |b|) - \log_v |b|\big)\\
& \leq \frac{1}{d^n-1} \big( 2 \log_v \max(1, |a|, |b|) - \log_v |b|\big)\\
& = \frac{1}{d^n-1} \rho(x, \zetaG)\ ,
\end{align*} where the final equality was established in the course of the proof of Lemma~\ref{lem:precomp}. Inserting these estimates into (\ref{eq:splitestimaterightside}), we find
\begin{align*}
\left| \dfrac{1}{d^n-1} \hat{k}^{(n)}_{\phi, \infty}(x) \right.&\left.- \hat{h}_{\phi, \infty}(x) - \dfrac{1}{d^n-1} \log_v(\delta(x,x)_{\infty})\right|& \\
& \leq \frac{1}{d^n-1} \rho(x, \zetaG) + \frac{1}{d^n-1}\cdot \frac{2C_1}{d-1} + \frac{1}{d^n-1} \rho(x, \zetaG)\\
& \leq \frac{2}{d^n-1} \max\left(\rho(x, \zetaG), \frac{C_1}{d-1}\right)\ .
\end{align*}Letting $C_\phi = \frac{C_1}{d-1}$ gives the asserted bound.
\end{proof}

\noindent\textbf{Remark:} Note that Proposition~\ref{prop:htconvergent} gives an effective, geometrically convergent algorithm for approximating the Berkovich canonical height $\hat{h}_{\phi, \infty}(x)$ by using the convergents $\hat{k}^{(n)}_{\phi}(x)$ instead of the `classical' convergents $\hat{\ell}_{\phi,\infty}(x)$. The advantage of these new convergents is that they require only taking the maximum over the coefficients of $(F^{(n)})^\gamma$, $(G^{(n)})^\gamma $ rather than the supremum of their values on discs.\\

We are now ready to show the convergence of the normalized function $\dfrac{1}{d^{2n}-d^n} \ord\Res_{\phi^n}(x)$ to the function $g_{\mu_{\phi}}(x,x)$. 

\begin{proof}[Proof of Theorem~\ref{thm:fnconvexplicit}]
Let $x=\gamma(\zetaG)$, where $\gamma(z) = bz+a$. Using the decompositions in Table \ref{table:decomp} above we have
\begin{align}
\left|\frac{1}{d^{2n}-d^n}\ord\Res_{\phi^n}(x) - g_{\mu_\phi}(x,x)\right| &\leq \left|\frac{1}{d^{2n}-d^n}\ord\Res(F^{(n)}, G^{(n)}) - \left(\frac{-1}{d(d-1)} \log|\Res(F, G)|\right)\right| \label{eq:firstpart}\\ 
& +\left|\frac{d^{2n}+d^n}{d^{2n}-d^n} \ord\det(\gamma) + \log(\delta(x,x)_{\infty})\right. \label{eq:secondpart}\\
 &  \left.-\frac{2}{d^{n}-1} \min\left(\ord\left((F^{(n)})^{\gamma}\right), \ord\left((G^{(n)})^{\gamma}\right)\right) - 2\hat{h}_{\phi, \infty}(x)\right|\ . \label{eq:thirdpart}
\end{align}

By Lemma \ref{lem:ordresit}, the term (\ref{eq:firstpart}) is identically zero. By Lemma \ref{lem:orddethsia}, the term in (\ref{eq:secondpart}) above is 
\begin{align*}
-\dfrac{2}{d^n-1} \log_v(\delta(x,x)_{\infty}))\ .
\end{align*} The terms in (\ref{eq:secondpart}) and (\ref{eq:thirdpart}) are precisely (twice) the terms bounded in Proposition \ref{prop:htconvergent}, and so we have 
\begin{align}
\left| \dfrac{1}{d^{2n}-d^n} \ord\Res_{\phi^{(n)}}(x) - g_{\mu_{\phi}}(x,x)\right| &\leq \dfrac{4}{d^n-1}\max\left(C_\phi, \rho(x, \zetaG)\right)\ . \label{eq:finalconvest}
\end{align}

This establishes both pointwise convergence on type II points and uniform convergence in the sets $\B_{\rho}(\zetaG, R)$ for fixed $R>0$.
\end{proof}

We note the following corollary to the convergence; the result in fact holds for diagonal Green's functions attached to arbitrary probability measures, as we will see in Lemma~\ref{lem:slopegreensfcn} below.
\begin{cor}\label{cor:gxxprops} The function $g_{\mu_\phi}(\cdot, \cdot)$ is convex up on segments $[x,y]$ in $\hberk$ and is Lipschitz continuous with respect to the path distance metric with Lipschitz constant 1.\end{cor}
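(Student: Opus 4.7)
My plan is to work directly from the integral representation (\ref{eq:firstgreendecomp}),
$$g_{\mu_\phi}(x,x) = \int_{\pberk} -\log\delta(x,x)_\zeta\, d\mu_\phi(\zeta) + C,$$
and to transfer the properties of the kernel $F_\zeta(x) := -\log\delta(x,x)_\zeta$ through the integral, using that $\mu_\phi$ is a probability measure. Although the corollary is billed as a consequence of the convergence in Theorem \ref{thm:fnconvexplicit}, the unit Lipschitz constant appears more transparently from the kernel representation.

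The first step will be to establish that, for any fixed $\zeta \in \hberk$, the function $F_\zeta$ restricted to $\hberk$ coincides, up to an additive constant depending only on $\zeta$, with the tree distance $\rho(\cdot, \zeta)$. This identity is standard in Berkovich potential theory and follows from the formula of \cite{BR} expressing $\rho$ in terms of $\delta(\cdot,\cdot)_\infty$ combined with the transformation law relating the Hsia kernel at $\zeta$ to the Hsia kernel at infinity. From this identity I will read off two properties of $F_\zeta$: first, it is $1$-Lipschitz in $x$, directly from the triangle inequality for $\rho$; second, it is convex up along any segment $[x,y]\subseteq \hberk$, because distance-to-a-point functions on the $\mathbb{R}$-tree $(\hberk,\rho)$ are piecewise affine with slopes in $\{-1,+1\}$, changing from $-1$ to $+1$ at most once along the segment (at the projection of $\zeta$ onto $[x,y]$). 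For $\zeta$ of type I or IV, the same two properties will be verified either by direct computation with the corresponding Hsia kernel formula or by approximating $\zeta$ by type II points.

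With these kernel properties in hand, the conclusions for $g_{\mu_\phi}(x,x)$ will follow by integration. For Lipschitz continuity, for any $x,y\in \hberk$,
$$|g_{\mu_\phi}(x,x) - g_{\mu_\phi}(y,y)| \le \int_{\pberk} |F_\zeta(x) - F_\zeta(y)|\, d\mu_\phi(\zeta) \le \rho(x,y)\cdot\mu_\phi(\pberk) = \rho(x,y).$$
For convexity up, pointwise convexity $F_\zeta(x)+F_\zeta(y)\ge 2F_\zeta(z)$ for $z$ on $[x,y]$ integrates to the midpoint inequality $g_{\mu_\phi}(x,x)+g_{\mu_\phi}(y,y)\ge 2g_{\mu_\phi}(z,z)$; together with the continuity of $g_{\mu_\phi}$ on the diagonal---which follows from $\mu_\phi$ having continuous potentials, a property noted earlier in the paper for invariant measures of rational maps of degree at least two---midpoint convexity upgrades to ordinary convexity along $[x,y]$.

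The main obstacle I anticipate is justifying the integration near type I points of $\pberk$, where both $F_\zeta$ and $\rho(\cdot,\zeta)$ can diverge. This is underwritten by the continuous---hence bounded---potentials of $\mu_\phi$, which guarantee absolute convergence of the integral; a dominated convergence argument then legitimates interchanging the pointwise $1$-Lipschitz bound with integration.
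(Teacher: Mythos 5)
Your proof is correct, and it takes a genuinely different route from the paper's. The paper's argument is a direct consequence of Theorem~\ref{thm:fnconvexplicit}: it cites Rumely's result (\cite{Ru1}, Proposition 1.3) that each normalized function $f_n(x) = \frac{1}{d^{2n}-d^n}\ord\Res_{\phi^n}(x)$ is convex up and $1$-Lipschitz, and then simply observes that both properties pass to the locally uniform limit $g_{\mu_\phi}(x,x)$. Your argument instead works directly from the integral representation $g_{\mu_\phi}(x,x) = \int_{\pberk}-\log_v\delta(x,x)_\zeta\,d\mu_\phi(\zeta)+C$, identifies the kernel $F_\zeta(x) := -\log_v\delta(x,x)_\zeta$ with $\rho(x,\zeta)$ (up to a $\zeta$-dependent constant, vanishing in the normalization used) for $\zeta\in\hberk$, observes that this distance-to-a-point function on the $\mathbb{R}$-tree has slopes $\pm 1$ with at most one sign change on any segment, handles the type I case separately, and integrates. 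Your approach has the advantage of being independent of the convergence theorem and of Rumely's earlier work, and it is more general: exactly the same argument establishes convexity and $1$-Lipschitz continuity of $g_\nu(x,x)$ for any probability measure $\nu$ on $\pberk$ with bounded potentials, not just $\nu = \mu_\phi$. Indeed, this is essentially the viewpoint the paper takes later in Section~\ref{sec:ArakelovGreen}, where Lemma~\ref{lem:slopegreensfcn} derives slope formulae for $g_\nu(x,x)$ from the same kind of kernel decomposition. The paper's approach, by contrast, buys brevity and serves to illustrate the strength of Theorem~\ref{thm:fnconvexplicit}. One small simplification available to you: since each $F_\zeta$ is convex up on the segment (not merely midpoint-convex), the integral $g_{\mu_\phi}(x,x)$ is directly convex up; there is no need to pass through midpoint convexity and invoke continuity of potentials to upgrade. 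Similarly, the Lipschitz bound needs only the triangle inequality for the integral and finiteness of the potential at $x,y\in\hberk$, not a dominated-convergence argument.
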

\begin{proof}
Let $r_n(x) := \frac{1}{d^{2n}-d^n} \ord\Res_{\phi^n}(x)$; it was shown in \cite{Ru1} Proposition 1.3 that this function is convex up on $\pberk$, and also that the $r_n$ are each Lipschitz continuous with Lipschitz constant $1+\frac{2}{d^n-1}$.

For the convexity of $g_{\mu_\phi}$, fix a segment $[x,y]\subseteq\hberk$. There exists a constant $R>0$ for which $[x,y]\subseteq B_{\rho}(\zetaG, R)$, and so we may assume that $r_n \to g_{\mu_\phi}$ uniformly on $[x,y]$. 

For brevity of notation, let $g(z) = g_{\mu_\phi}(z,z)$. Fix $t\in [x,y]$; we need to show $$\dfrac{g(y)-g(x)}{\rho(y,x)} \geq \dfrac{g(t)-g(x)}{\rho(t,x)}\ .$$ By Theorem \ref{thm:fnconv}, we can choose $n$ large enough so that $\left|\frac{g(y)-r_n(y)}{\rho(y,x)}\right| < \frac{\epsilon}{4}\ ,$ $\left|\frac{r_n(x) - g(x)}{\rho(y,x)}\right| < \frac{\epsilon}{4}\ ,$ and $\left|\frac{g(t)-r_n(t)}{\rho(t,x)}\right| < \frac{\epsilon}{4}\ .$ We have 
\begin{align*}
\frac{g(y)-g(x)}{\rho(y,x)} &\ =\  \frac{g(y) - r_n(y) + r_n(y) - r_n(x) + r_n(x) - g(x)}{\rho(y,x)}\\
& \ \geq \ -\frac{\epsilon}{4} -\frac{\epsilon}{4} + \frac{r_n(y) - r_n(x) }{\rho(y,x)}\\
& \ \geq \ -\frac{\epsilon}{2} + \frac{r_n(t) - r_n(x)}{\rho(t,x)}\\
& \ = \ -\frac{\epsilon}{2} + \frac{r_n(t) - g(t) + g(t) - g(x)+g(x) - r_n(x)}{\rho(t,x)}\\
& \ \geq \ -\frac{\epsilon}{2} -\frac{\epsilon}{4} - \frac{\epsilon}{4} + \frac{g(t) - g(x)}{\rho(t,x)}\\
& \ = \ -\epsilon + \frac{g(t) - g(x)}{\rho(t,x)}\ .
\end{align*} Since $\epsilon>0$ was arbitrary, we conclude that $g(t)$ is convex up on $[x,y]$.

To see that $g_{\mu_\phi}(x,x)$ is Lipschitz continuous, fix $x,y\in \hberk$, and let $0<\epsilon<\frac{\rho(x,y)}{2}$. Choose $n$ sufficiently large so that $$|g(x)-r_n(x)| < \epsilon/2\ \textrm{ and }\ |g(y)-r_n(y)| < \epsilon/2\ .$$ Since $|r_n(x) - r_n(y)|\leq 1+\frac{2}{d^n-1}$, we have \begin{align*} |g(x)-g(y)| &= |g(x)-r_n(x)+r_n(x)-r_n(y)+r_n(y)-g(y)|\\ & \leq \left(1+\frac{2}{d^n-1}\right)\rho(x,y) + \epsilon\ .\end{align*} Letting $\epsilon \to 0$ and $n\to \infty$ gives the desired result.
\end{proof}

\section{Weak Convergence of the Crucial Measures} \label{sect:lapconv}
We now apply the results of the previous section to show the weak convergence of the measures $\nu_{\phi^n}$ to the canonical measure $\mu_{\phi}$. The proof will come from the following more explicit theorem

\begin{thm}\label{thm:wkconvexplicit}
If $\Gamma\subseteq\hberk$ is a finite connected subtree and $f$ is a continuous piecewise affine map on $\Gamma$, then there exist a constant $C_\phi>0$ depending only on $\phi$, and constants $R_\Gamma, D_\Gamma>0$ depending only on $\Gamma$ so that 
$$\left|\int_{\pberk} f d(\mu_{\phi}-\nu_{\phi^n})\right| \leq \dfrac{4}{d^n-1} \left(\max\left(C_{\phi}, R_\Gamma\right)\cdot  |\Delta|(f) + \max_{\Gamma} |f| \cdot D_{\Gamma}\right)\ .$$
\end{thm}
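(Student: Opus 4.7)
The plan is to rewrite $\int_\Gamma f\,d(\mu_\phi - \nu_{\phi^n})$ as an integral against a Laplacian on $\Gamma$, apply Green's identity to swap the roles of the test function and the potential, and then invoke the uniform convergence estimate from Theorem~\ref{thm:fnconvexplicit}. The two natural potentials are $g_{\mu_\phi}(x,x)$ and $\tfrac{1}{d^{2n}-d^n}\ord\Res_{\phi^n}(x)$: the first gives, via its decomposition (\ref{eq:greendecompintro}), access to $\mu_\phi$; the second is precisely the object whose Laplacian produced $\nu_{\phi^n}$ in \cite{Ru2}. Using the compatibility $\Delta_\Gamma = (r_{\pberk,\Gamma})_*\Delta_{\pberk}$ together with the slope formulae promised in the opening of this section (extending \cite{Ru2}, Propositions 5.2--5.4), one can identify on $\Gamma$
\begin{align*}
(r_{\Gamma})_*(\mu_\phi - \nu_{\phi^n}) \;=\; \Delta_\Gamma\!\left(g_{\mu_\phi}(x,x) - \tfrac{1}{d^{2n}-d^n}\ord\Res_{\phi^n}(x)\right) + \mathcal{E}_{n,\Gamma},
\end{align*}
where $\mathcal{E}_{n,\Gamma}$ is a finite correction measure supported at the endpoints and branch points of $\Gamma$, arising from the $-\log\delta(x,x)_\infty$ term in (\ref{eq:greendecompintro}) and from boundary slope contributions that do not cancel under retraction.

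Next I would apply the symmetric pairing $\int_\Gamma f\,d\Delta_\Gamma(h) = \int_\Gamma h\,d\Delta_\Gamma(f)$, valid for $f \in \CPA(\Gamma)$ and $h$ of bounded differential variation. Taking $h = g_{\mu_\phi}(x,x) - \tfrac{1}{d^{2n}-d^n}\ord\Res_{\phi^n}(x)$ and using that $\Delta_\Gamma(f)$ is a finite signed measure with total variation $|\Delta|(f)$, one obtains
\begin{align*}
\left|\int_\Gamma f\,d\Delta_\Gamma(h)\right| \;\leq\; \|h\|_{L^\infty(\Gamma)} \cdot |\Delta|(f).
\end{align*}
Since $\Gamma$ is compact in the strong topology, it lies inside some ball $\B_\rho(\zetaG, R_\Gamma)$, and Theorem~\ref{thm:fnconvexplicit} then yields $\|h\|_{L^\infty(\Gamma)} \leq \tfrac{2}{d^n-1}\max(C_\phi, R_\Gamma)$. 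This produces the first summand $\tfrac{2}{d^n-1}\max(C_\phi,R_\Gamma)\cdot |\Delta|(f)$ in the target bound.

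Finally, I would bound $\bigl|\int_\Gamma f\,d\mathcal{E}_{n,\Gamma}\bigr|$ by $\max_\Gamma |f|$ times the total variation of $\mathcal{E}_{n,\Gamma}$. That total variation should be shown to be at most $D_\Gamma/(d^n-1)$, where $D_\Gamma$ depends only on the combinatorics of $\Gamma$ (the number of endpoints and branch points and their distances to $\zetaG$); the factor $1/(d^n-1)$ enters exactly as in Proposition~\ref{prop:htconvergent}, since the discrepancies at the boundary of $\Gamma$ are controlled by the same error terms that govern the convergence of $\hat h^{(n)}_{\phi,v}/d^n$ to $\hat h_{\phi,v}$. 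The hardest step will be this last one: precise bookkeeping of the slopes of $\ord\Res_{\phi^n}$ at the boundary vertices of $\Gamma$, demonstrating that the `non-Laplacian' residue decays geometrically in $n$ with a constant depending only on the graph. The iterated slope formulae developed in this section (paralleling those of \cite{Ru2}) are designed precisely to supply this bookkeeping, after which the two estimates combine by the triangle inequality to give the stated bound.
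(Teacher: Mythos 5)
Your proposal matches the paper's proof in structure and in all its essential steps: express the pushed-forward difference of measures $(r_{\pberk,\Gamma})_*(\mu_\phi - \nu_{\phi^n})$ as the Laplacian of the potential difference $\tfrac{1}{d^{2n}-d^n}\ord\Res_{\phi^n} - g_{\mu_\phi}$ plus a small correction measure; invoke the self-adjointness of the Laplacian pairing on $\Gamma$ to move $\Delta_\Gamma$ onto $f$; bound the resulting term by $\|h\|_{L^\infty(\Gamma)}\cdot|\Delta|(f)$ via Theorem~\ref{thm:fnconvexplicit}; and bound the correction integral by $\max_\Gamma|f|$ times the mass of the error measure. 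This is exactly the route taken in the text, where the correction measure is called $\tfrac{2}{d^n-1}(r_{\pberk,\Gamma})_*\Lambda_n$.

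One heuristic in your last paragraph is off, though. The error measure does \emph{not} originate from the $-\log\delta(x,x)_\infty$ term in (\ref{eq:greendecompintro}) --- that term is absorbed cleanly via Lemma~\ref{lem1} and Lemma~\ref{lem:lapGammaFRn} --- nor is its $\tfrac{1}{d^n-1}$ decay ``the same error term that governs the convergence of $\hat h^{(n)}_{\phi,v}/d^n$ to $\hat h_{\phi,v}$''; Proposition~\ref{prop:htconvergent} is used only to control $\|h\|_{L^\infty(\Gamma)}$ in the first summand. Instead, $\Lambda_n$ comes from the slope formulae for $f_n = \tfrac{1}{d^{2n}-d^n}\ord\Res_{\phi^n} + \log\delta(\cdot,\cdot)_\infty$ on the components $\Gamma^{(n)}_i$ of the augmented graph $\Gamma^{(n)}$ that meet $\Gamma_{\widehat{FR},n}$ in at most one point (Lemmas~\ref{lem:slopenotw},~\ref{lem:slopew}); every slope there is an integer multiple of $\tfrac{2}{d^n-1}$, whence the geometric decay. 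Nor is $\Lambda_n$ supported only at the endpoints and branch points of $\Gamma$: Proposition~\ref{prop:laponept} shows that additively indifferent fixed points of $\phi^n$ lying in the $\Gamma^{(n)}_i$ also contribute, and the constant $D_\Gamma$ (Lemma~\ref{lem:lapgammak}) must be assembled from a combinatorial bound on the number of components of $\Gamma^{(n)}\setminus\Gamma^{(n)}_0$ (Lemma~\ref{lem:treelemma}) together with a bound on the number of such fixed points per component. So while your strategy is the right one, the hard ``bookkeeping'' step you flag requires these dynamical slope computations, not the height-convergence estimate you cite.
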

We prove Theorem~\ref{thm:wkconvexplicit} in Section \ref{sect:lapweakconv} below, and after we give the proof of Theorem~\ref{thm:wkconv}. Before doing either of these, we briefly outline the remainder of this section. In Section~\ref{subsect:slopesagain} we expand on the slope formulae for $\ord\Res_{\phi}(x)$ developed in \cite{Ru2} to include slopes at arbitrary points in $\hberk$. It turns out that the more convenient function to study is \begin{align}\label{eq:modifiedfn} f_n(x) = \dfrac{1}{d^{2n}-d^n} \ord\Res_{\phi^n}(x) +\log(\delta(x,x)_{\infty})\ .\end{align} Working with this function will give cleaner slope functions when computing the Berkovich Laplacian. We also note that \begin{align}\label{eq:desiredprop} \dfrac{1}{d^{2n}-d^n}\ord\Res_{\phi^n}(x) - g_{\mu_{\phi}}(x,x) = f_n(x) - 2\hat{h}_{\phi,\infty}(x) + \log_v|\Res(\Phi)|^{-1/(d(d-1))}\ .\end{align} 

Our strategy in proving Theorem~\ref{thm:wkconvexplicit} is as follows: let $\Gamma_{\textrm{FR, n}}$ denote the tree spanned by the type I $n$-periodic points and type II repelling $n$-periodic points, and let $\Gamma_{\widehat{\textrm{FR}}, n}$ denote the truncation of this tree obtained by removing segments $[\alpha, Q_0)$, where $\alpha$ is a type I $n$-periodic point and $Q_0$ is a type II point chosen so that $\ordRes_{\phi^n}(\cdot)$ has constant slope along $[\alpha, Q_0)$. We compute the Laplacian of $f_n(x)$ on arbitrary finite connected subtrees $\Gamma\subseteq \hberk$, first by joining such a subtree to $\Gamma_{\widehat{FR},n}$ and then computing the (retraction of the) Laplacian on the larger subtree. We prove weak convergence for $\CPA$ test functions on an (arbitrary) fixed subtree $\Gamma\subseteq\hberk$, and then using an approximation theorem for continuous functions on $\pberk$ extend this to show weak convergence in general. 

\subsection{Slope Formulae Revisited}\label{subsect:slopesagain}

Here we compute the slope of the functions $f_n(x)$ on finite connected subtrees $\Gamma\in\hberk$ which share at most one point in common with the corresponding $\Gamma_{\textrm{FR}, n}$; in the following section these results will be used to give explicit formulae for $\Delta_{\Gamma}(f_n)$ for such $\Gamma$. The parallel result for subtrees $\Gamma\subseteq\Gamma_{\widehat{FR},n}$ is found in \cite{Ru2} Corollary 6.5, which will be discussed in the next section. 

\begin{lemma}\label{lem1}
Let $\Gamma\subseteq\hberk$ be any finite tree. Let $\mu_{Br, \Gamma}$ be the branching measure attached to this tree. Then $$ \Delta_\Gamma(\log(\hsia{\cdot}{\cdot}{\infty}) ) = -2\mu_{Br, \Gamma} + 2\delta_{r_{\Gamma}(\infty)}$$ where $r_{\Gamma}(\infty)$ is the retraction of $\infty$ to $\Gamma$.
\end{lemma}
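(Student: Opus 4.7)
The plan is to view $h(x) := \log\hsia{x}{x}{\infty}$ as a function on $\Gamma$ with a very clean piecewise-affine structure, then compute slopes direction-by-direction. Concretely, setting $Q := r_\Gamma(\infty)$, I would first establish the identity
$$h(x) \;=\; h(Q) - \rho(x, Q) \qquad \text{for every } x \in \Gamma.$$
This follows from the basic fact that $\log\hsia{y}{y}{\infty}$ is affine of slope $+1$ (with respect to $\rho$) along any path in $\pberk$ moving toward $\infty$, a direct consequence of the description of $\hsia{y}{y}{\infty}$ as a diameter (see \cite{BR}, Chapter 4). For any $x \in \Gamma$, the geodesic from $x$ to $\infty$ in $\pberk$ first travels from $x$ to $Q$ inside $\Gamma$ and only then exits $\Gamma$ to reach $\infty$, so integrating slope $+1$ along the $[x,Q]$ portion gives the claim. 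Since constants have zero Laplacian, the identity reduces the problem to computing $\Delta_\Gamma\bigl(-\rho(\cdot, Q)\bigr)$.

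Next I would carry out the slope count. For any $P \in \Gamma$ and $\vv \in T_P \Gamma$, the slope $\del_\vv h(P) = -\del_\vv \rho(\cdot, Q)(P)$ equals $+1$ if $\vv$ points toward $Q$ along the unique segment $[P, Q]$, and $-1$ otherwise. At any $P \neq Q$, exactly one of the $v_\Gamma(P)$ directions points toward $Q$, giving
$$\sum_{\vv \in T_P \Gamma} \del_\vv h(P) \;=\; 1 - (v_\Gamma(P) - 1) \;=\; 2 - v_\Gamma(P),$$
so $P$ receives mass $v_\Gamma(P) - 2$ in $\Delta_\Gamma(h)$. At $P = Q$, every direction in $T_Q \Gamma$ points strictly away from $Q$, so the sum equals $-v_\Gamma(Q)$ and $Q$ receives mass $v_\Gamma(Q)$. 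Comparing with the right-hand side $-2\mu_{Br, \Gamma} + 2\delta_Q$, which places $v_\Gamma(P) - 2$ at each $P \neq Q$ and $(v_\Gamma(Q) - 2) + 2 = v_\Gamma(Q)$ at $Q$, the two measures agree.

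The proof is largely bookkeeping, and the one point I would want to be careful about is the identification of tangent directions: I need the direction from $P \in \Gamma$ toward $Q = r_\Gamma(\infty)$ within $\Gamma$ to coincide with the direction from $P$ toward $\infty$ within $\pberk$. This is immediate from the definition of the retraction, since $Q$ lies on the unique path from $P$ to $\infty$. I would also note that the argument implicitly uses $\Gamma \subseteq \hberk$ so that $h$ is finite on $\Gamma$; at classical points $h = -\infty$ and the slope computation breaks down, but this restriction is exactly how the lemma is used in the sequel.
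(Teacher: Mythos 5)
Your proof is correct and follows essentially the same route as the paper's: both observe that $\log\delta(x,x)_\infty$ is (up to sign and an additive constant) the arclength parameterization along paths toward $\infty$, then tally slopes of $\pm 1$ at each vertex, splitting into the cases $P \neq r_\Gamma(\infty)$ and $P = r_\Gamma(\infty)$. Your explicit normalization $h(x) = h(Q) - \rho(x,Q)$ spells out what the paper states more tersely, and your closing remark that the lemma implicitly needs $\Gamma$ to avoid type~I points (despite the statement saying $\Gamma \subseteq \pberk$) is a fair observation about a small imprecision in the lemma as stated.
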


\begin{proof}
This is a straightforward computation. Let $w=r_{\Gamma}(\infty)$. Note that $\log(\hsia{\cdot}{\cdot}{\infty})$ is the arclength parameterization for a segment of $\Gamma$. Fix $P\in \Gamma \setminus \{w\}$, and let $\vv_w\in T_P$ be the unique direction towards $w$. We have
\begin{align*}
\sum_{\vv\in T_P \Gamma}\del_{\vv}(\log(\hsia{\cdot}{\cdot}{\infty}))(P)=& \sum_{\vv\neq \vv_w} \del_{\vv}(\log(\hsia{\cdot}{\cdot}{\infty}))(P) + \del_{\vv_w}(\log(\hsia{\cdot}{\cdot}{\infty}))(P)\\
=& \left(\sum_{\vv\neq \vv_w} -1\right) +1\\
=& (v_\Gamma(P) - 1)(-1) + 1\\
=& (2-v_\Gamma(P))\ .
\end{align*}
For $P=w$:
\begin{align*}
\sum_{\vv\in T_w\Gamma} \del_{\vv}((\log(\hsia{\cdot}{\cdot}{\infty}))(P) =& \sum_{\vv\in T_w\Gamma} -1\\
=& -v_\Gamma(w)\\
=& (2-v_\Gamma(w)) -2\ .
\end{align*}
Thus 
\begin{align*}
\Delta_{\Gamma}(\log(\hsia{\cdot}{\cdot}{\infty})) =& \sum_{P\in \Gamma} (v_\Gamma(p)-2)\delta_P + 2\delta_w \\
=& -2\mu_{Br, \Gamma} + 2\delta_{r_{\Gamma}(\infty)}\ .
\end{align*}
\end{proof}

In the next two lemmas, we compute slopes of the function $f_n$ introduced above. For this, we will rely on several Identification Lemmas of Rumely, given in \cite{Ru2} Lemmas 2.1, 2.2, and 4.5. These formulas depend on several types of multiplicities associated to a rational map, which we recall now. We first give definitions of these multiplicities at $P=\zetaG$ under the assumption $\phi(\zetaG) = \zetaG$; the case for general type II points can be recovered from this via pre-and-post conjugation of $\phi$. We also assume that we have fixed a coordinate system so that $T_{\zetaG} \simeq \PP^1(k)$, and will write $\vv_{\tilde{a}}$ for the direciton corresponding to $\tilde{a}\in \PP^1(k)$. Here, the tilde is important, and $\vv_{\tilde{a}}$ should not be confused with $\vv_a\in T_{\zetaG}$, where $a\in \pberk$; by $\vv_a$ we mean the direction for which $a\in B_{\zetaG}(\vv_a)^-$.

\begin{enumerate}
\item The {\em fixed point multiplicity} in the direction $\vv\in T_{\zetaG}$ is given $$\#F_\phi(\zetaG, \vv) = \textrm{ the number of type I fixed points in } B_{\zetaG}(\vv)^-\ .$$
\item The {\em reduced fixed point multiplicity} in the direction $\vv_{\tilde{a}}\in T_{\zetaG}$ is given $$\#\tilde{F}_\phi(\zetaG, \vv_{\tilde{a}})  = \textrm{ multiplicity of  }\tilde{a}\textrm{ as a fixed point of the reduction of }\phi\textrm{ at }P\ .$$ 
\item The {\em surplus multiplicity} in the direction $\vv_{\tilde{a}}\in T_{\zetaG}$ can be given as follows: Let $[F, G]$ be a normalized lift of $\phi$, and write $\tilde{F} = \tilde{A}\cdot \tilde{F}_0, \tilde{G} = \tilde{A} \cdot \tilde{G}_0$, where $\tilde{A} = \gcd(\tilde{F}, \tilde{G})$. Then the surplus multiplicity of $\phi$ at $\zetaG$ in the direction $\vv_{\tilde{a}}\in T_{\zetaG}$ is $$s_\phi(\zetaG, \vv_{\tilde{a}}) = \textrm{multiplicity of } \tilde{a} \textrm{ as a root of } \tilde{A}\ .$$ There is also a geometric interpretation of the surplus multiplicity (see \cite{XF} Proposition 3.10). The surplus multiplicity is closely related to the ramification of the map $\phi$ (see \cite{XF}). \\
\end{enumerate}

Now let $\Gamma$ be a finite, connected subtree of $\hberk$ that intersects $\Gamma_{FR,n}$ in at most one point. For fixed $n$, let $w_n$ denote the point of $\Gamma$ that is nearest to $\Gamma_{FR,n}$.

\begin{lemma}\label{lem:slopenotw}
Fix a type II point $P\in \Gamma\setminus\{w_n\}$, and let $i(P) = \left\{\begin{matrix} 0\ , & P \neq r_\Gamma(\infty)\\ -2\ , & P = r_\Gamma(\infty)\end{matrix}\right.\ .$ Then 

$$\sum_{\vv\in T_P \Gamma}\del_{\vv}(f_n)(P) = i(P)+ 
\begin{cases} 

\frac{2}{d^n-1}(v_\Gamma(P)-2), & \text{if }\phi^n(P)\neq P, \\
\frac{2}{d^n-1}(v_\Gamma(P)-1), & \text{if }\phi^n(P) = P  \text{ and }P\text{ is not id-indifferent for }\phi^n,\\
0, & \text{if }\phi^n(P) = P \text{ and }P\text{ is id-indifferent for }\phi^n.\\
\end{cases}
$$
\end{lemma}
\begin{proof}

We begin with the case of $\phi^n(P) \neq P$. Here we use the formula from Proposition 5.4 in \cite{Ru2}, together with the calculations from Lemma~\ref{lem1}. By a slight abuse of notation, let $\vv_w := \vv_{w_n}$ be the direction at $P$ pointing towards $w_n$. Note that $\#F_{\phi^n}(P,\vv) = 0$ for any $\vv\neq \vv_w$, and $\#F_{\phi^n}(P,\vv_w) = d^n+1$. In particular, applying \cite{Ru2} Proposition 5.4 we find
\[
\del_{\vv} (\ordRes_{\phi^n})(P) = \left\{\begin{matrix} d^{2n} + d^n\ , & \vv \neq \vv_w\\ -(d^{2n} + d^n) \ , & \vv = \vv_w\end{matrix}\right.\ .
\] Summing over all directions gives
\[
\sum_{\vv\in T_P \Gamma} \del_{\vv} (\ordRes_{\phi^n})(P) = (d^{2n} + d^n) (v_\Gamma(P) - 2)\ .
\] In the proof of Lemma~\ref{lem1}, we saw
\[
\sum_{\vv\in T_P\Gamma} \del_{\vv} (\log(\delta(\cdot, \cdot)_\infty))(P) = \left\{\begin{matrix} 2-v_\Gamma(P)\ , & P\neq r_\Gamma(\infty)\\ -v_\Gamma(P)\ , & P = r_\Gamma(\infty)\end{matrix}\right\} = (2-v_\Gamma(P)) + i(P)\ .
\] Therefore, we find that 
\[
\sum_{\vv\in T_P\Gamma} \del_{\vv}(f_n)(P) = \frac{2}{d^n-1} (v_\Gamma(P) - 2)+ i(P)\ .
\]

In the case $\phi^n(P)=P$, we need to separate into the cases where $P$ is id-indifferent for $\phi^n$ and where it is not. 

If $P$ is not id-indifferent for $\phi^n$, then the First Identification Lemma (\cite{Ru2}, Lemma 2.1) implies that for all directions $\vv\in T_P \Gamma \setminus \{\vv_w\}$, we have $s_{\phi^n}(P,\vv) = 0$ and $(\phi^n)_*(\vv)\neq \vv$. For $\vv=\vv_w$, we claim that $s_{\phi^n}(P,\vv) = d^n-1$ and $(\phi^n)_*(\vv) = \vv$.  To see this, we note that $F_{\phi^n}(P, \vv_w) = d^n +1$, as all of the fixed points lie in the direction towards $w$. The first identification lemma then gives that $$d^n+1 = s_{\phi^n}(P, \vv_w) + \tilde{F}_{\phi^n}(P, \vv_w)\ .$$ Since $s_{\phi^n}(P, \vv_w) \leq d^n$, this forces $\tilde{F}_{\phi^n}(P, \vv_w) \geq 1$, i.e. $(\phi^n)_* \vv_w = \vv_w$. Now note that $P$ is additively indifferent for $\phi^n$: by assumption it is not id-indifferent, and it cannot be repelling, as it does not lie in $\Gamma_{\textrm{FR}, n}$. If it were multiplicatively indifferent, then it would necessarily fix two directions, since $\widetilde{\phi^n}(z) = \tilde{\lambda}z$ fixes two points of $\PP^1(k)$; but the First Identification Lemma (\cite{Ru2} Lemma 2.1) would then imply that there are two directions containing fixed points, which is a contradiction. So $P$ is addtivively indifferent for $\phi^n$. Since the linear map $z\mapsto z+\tilde{\lambda}$ has a unique degree 2 fixed point, we conclude that $\tilde{F}_{\phi^n}(P, \vv_w) = 2$, hence $s_{\phi^n}(P, \vv_w) = d^n-1$. Therefore, applying \cite{Ru2} Proposition 5.2, we find
\[
\del_{\vv}(\ordRes_{\phi^n})(P) = \left\{\begin{matrix} d^{2n}+d^n \ , & \vv \neq \vv_w\\ 
-d^{2n} + d^n\ , & \vv = \vv_w\ .\end{matrix}\right.\ .
\] Summing over all directions gives
\[
\frac{1}{d^{2n}-d^n} \sum_{\vv\in T_P \Gamma} \del_{\vv}(\ordRes_{\phi^n})(P) = \frac{d^{2n}+d^n}{d^{2n}-d^n} (v_\Gamma(P) - 1) - 1\ .
\] Accounting also for the contribution of $\log(\delta(\cdot, \cdot)_\infty)$, we find
\[
\sum_{\vv\in T_P\Gamma} \del_{\vv}(f_n)(P) = \frac{2}{d^n-1} (v_\Gamma(P) - 1) + i(P)
\]

The proof when $P$ is id-indifferent is similar. We again claim that $s_{\phi^n}(P, \vv_w) = d^n-1$: to see this, we apply the Third Identification Lemma (\cite{Ru2} Lemma 4.5), which implies that $\vv_w$ is the only direction with $s_{\phi^n}(P, \vv_w)>0$. Since $d^n = \deg_{\phi^n}(P) + \sum_{\vv\in T_P} s_{\phi^n}(P, \vv)$ (see \cite{XF} Equation 3.1), this implies that $d^n-1 = s_{\phi^n}(P, \vv_w)$. In particular, by \cite{Ru2} Proposition 5.3, we find
\[
\del_{\vv} (\ordRes_{\phi^n})(P) = \left\{\begin{matrix} d^{2n}-d^n\ , &  \vv\neq \vv_w\\
-d^{2n}+d^n \ , & \vv = \vv_w\end{matrix}\right.\ .
\] Summing over all places gives
\[
\frac{1}{d^{2n}-d^n} \sum_{\vv\in T_P\Gamma} \del_{\vv}(\ordRes_{\phi^n})(P) = (v_\Gamma(P) - 2)
\] Adding the contribution from the $\log(\delta(\cdot, \cdot)_\infty)$ term gives
\[
\sum_{\vv\in T_P\Gamma} \del_{\vv}(f_n)(P) = i(P)\ ,
\]
\end{proof}

We are left to consider the case of $P=w_n$, where we recall that $w_n$ is the point of $\Gamma$ which is nearest to $\Gamma_{\textrm{FR}, n}$. If $P=w_n$,then  $s_{\phi^n}(w_n,\vv)$ and $\#F_{\phi^n}(w_n,\vv)$ are zero for all directions $\vv$ pointing into $\Gamma$; to see this, note that these are precisely the directions which point \emph{away} from $\Gamma_{\textrm{FR}, n}$, hence $F_{\phi^n}(w_n, \vv) = 0$. If $w$ is fixed by $\phi^n$, but not id-indifferent, then the First Identification Lemma (\cite{Ru2} Lemma 2.1) gives that $0=s_{\phi^n}(w_n, \vv) + \tilde{F}_{\phi^n}(w_n, \vv)$, hence $s_{\phi^n}(w_n, \vv) = 0$. If $w$ is id-indifferent, then the Third Identification Lemma (\cite{Ru2} Lemma 4.5) implies that $s_{\phi^n}(w_n, \vv) = 0$ since there are no type I fixed points in $B_{w_n}(\vv)^-$. Likewise, if $w_n$ is moved by $\phi^n$, then the Second Identification Lemma (\cite{Ru2} Lemma 2.2) gives that $s_{\phi^n}(w_n, \vv) = 0$. 

\begin{lemma} \label{lem:slopew}
Suppose $P=w_n \in \Gamma$ is a type II point, and let $i(P) = i(w_n)$ be as in the statement of Lemma~\ref{lem:slopenotw}. Then
$$
\sum_{\vv\in T_{w_n}\Gamma} \del_{\vv} (f_n)(P)  = 2+ i(P) + \begin{cases} 0, & w_n \mbox{ is an id-indifferent fixed point,}\\
\dfrac{2}{d^n-1} v_\Gamma(w_n), & \mbox{ otherwise.}

\end{cases}
$$
\end{lemma}

\begin{proof}
Here again the proof splits into three cases. If $w_n$ is not fixed, then by \cite{Ru2} Proposition 5.4,
\[
\sum_{\vv\in T_{w_n} \Gamma} \del_{\vv}( \ord\Res_{\phi^n})(P) = (d^{2n} + d^n)v_\Gamma(w_n)
\]  for all $\vv\in T_{w_n}(\Gamma)$.  Again taking into account the contribution from $\log(\delta(\cdot, \cdot)_\infty)$, we find
\begin{align*}
\sum_{\vv\in T_{w_n}} \del_{\vv}(f_n)(P) &=  \frac{d^{2n}+d^n}{d^{2n}-d^n} v_\Gamma(w_n) + 2- v_\Gamma(w_n) + i(w_n)\\
& = \frac{2}{d^n-1} v_\Gamma(w_n) + 2+i(w_n)\ .
\end{align*}

If $w_n$ is fixed by $\phi^{n}$ but is not id-indifferent, we argued above that $s_{\phi^n}(w_n, \vv) = 0$, hence $(\phi^n)_* \vv \neq \vv$ for any $\vv$ pointing into $\Gamma$ (this is again the First Identification Lemma, \cite{Ru2} Lemma 2.1). Therefore, applying \cite{Ru2} Proposition 5.2, we find
\[
\del_{\vv}(\ordRes_{\phi^n})(P) = d^{2n} + d^n
\] for all $\vv\in T_{w_n}(\Gamma)$. As in the previous case, summing over all directions and taking into account the contribution from $\log(\delta(\cdot, \cdot)_\infty)$ gives
\[
\sum_{\vv\in T_{w_n}\Gamma} \del_{\vv}(f_n)(P) =  \frac{2}{d^n-1} v_\Gamma(w_n) + 2+i(w_n)\ .
\]

Finally, if $w_n$ is fixed by $\phi^n$ and is id-indifferent, applying \cite{Ru2} Proposition 5.3 we find
\[
\del_{\vv}( \ordRes_{\phi^n})(P) = d^{2n}-d^n
\]  for all $\vv\in T_{w_n}(\Gamma)$. Therefore, summing over all places and taking into account the contribution from $\log(\delta(\cdot, \cdot)_\infty)$ gives
\[
\sum_{\vv\in T_{w_n}\Gamma} \del_{\vv}(f_n)(P)  = v_\Gamma(w_n) + 2 - v_\Gamma(w_n) + i(w_n) = 2+i(w_n)\ .
\]
\end{proof}

\noindent\textbf{Remark:} From now until Section~\ref{sect:exampleswkconv}, we will assume that $\infty$ is a fixed point of $\phi$, which can always be arranged by conjugating $\phi$ with an appropriate $\gamma\in \GL_2(K)$. The equivariance of the measures $\mu_\phi$ and $\nu_\phi$ ensures that Theorem~\ref{thm:wkconvexplicit} is valid if we replace $\phi$ by such a conjugate. 

\subsection{Applications To Laplacians}\label{sect:applap}
In this section we use the slope formulae described above to relate the Laplacian of $f_n$ to both the crucial measure and the canonical measure; these relations will be key in the estimates that give weak convergence. 

Recall that $\Gamma_{FR,n}$ is the tree in $\pberk$ spanned by the classical $n$-periodic points and the type II repelling $n$-periodic points; in \cite{Ru2} it was shown that this is spanned by finitely many points. Fix $n$, and fix also a finite tree $\Gamma\subseteq\hberk$. Take $R_{\Gamma}>0$ sufficiently large so that $\Gamma \subseteq B_{\rho}(\zetaG, R_\Gamma)$. 

We work with a truncated version of $\Gamma_{\textrm{FR},n}$, which we denote by $\Gamma_{\widehat{FR},n}$. It is constructed as follows: for each classical fixed point $\alpha_i$ of $\phi^n$, choose a point $Q_i\in \hberk$ sufficiently near to $\alpha_i$ so that $[Q_i, \alpha_i]$ contains no branch points of $\Gamma_{FR,n}$, and that the slope of $\ord\Res_{\phi^n}(\cdot)$ is constant on this segment (see \cite{Ru2}). We further extend these branches if necessary to ensure that none of the endpoints of $\Gamma_{\widehat{\textrm{FR}}, n}$ lie in $B_{\rho}(\zetaG, R_\Gamma)$; because of this last condition, our $\Gamma_{\widehat{\textrm{FR}}, n}$ differs slightly from the truncated trees used in \cite{Ru2}.

Our goal is to compute the Laplacian of $f_n$ on $\Gamma$, which we will do by first computing the Laplacian of $f_n$ on a larger tree $\Gamma^{(n)}$, and then retracting to $\Gamma$. If $\Gamma\cap \Gamma_{\widehat{FR}, n} \neq \emptyset$, let $\Gamma^{(n)} = \Gamma \cup \Gamma_{\widehat{FR}, n}$. Otherwise, letting $x_n\in \Gamma_{\widehat{FR}, n}$ denote the point in $\Gamma_{\widehat{FR}, n}$ lying closest to $\Gamma$, and $w_n\in \Gamma$ the point lying closest to $\Gamma_{\widehat{FR}, n}$, we define $\Gamma^{(n)} = \Gamma \cup [w_n, x_n]$. In either case, we let $\Gamma^{(n)}_0 = \Gamma^{(n)}\cap \Gamma_{\widehat{FR}, n}$, and for $i= 1, 2, ..., N=N(n, \Gamma)$, we let $\Gamma^{(n)}_i$ denote the \emph{closures} of the various components of $\Gamma^{(n)}\setminus \Gamma^{(n)}_0$.\\

\noindent \textbf{Remark}: Each $\Gamma^{(n)}_i$ intersects $\Gamma_{\widehat{\textrm{FR}}, n}$ in exactly one point, which is necessarily type II: if the intersection is along an edge of $\Gamma_{\widehat{\textrm{FR}},n}$, then the point of intersection necessarily has at least three directions (at least two into $\Gamma_{\widehat{\textrm{FR}}, n}$ and one into $\Gamma^{(n)}_i$. Hence it is a type II point. Otherwise, the intersection is at an endpoint of $\Gamma_{\widehat{\textrm{FR}}, n}$; by our choice of $R_\Gamma$ it cannot be a point $Q_0$ from the truncation, and the only other endpoints are Berkovich repelling points (\cite{Ru2} Proposition 4.3), which are necessarily type II. \\

We begin with a lemma that shows that, in the case $\Gamma\cap \Gamma_{\widehat{FR}, n} \neq \emptyset$, the number $N(n, \Gamma)$ is uniformly bounded in terms of $\Gamma$:

\begin{lemma}\label{lem:treelemma}
 There is a constant $K(\Gamma)$ such that for any finite tree $\Gamma\subseteq\hberk$ having exactly $s$ edges, and any connected subtree $\Gamma_0\subseteq\Gamma$, the number of connected components of $\Gamma\setminus \Gamma_0$ is bounded by $K(\Gamma) = 2s$.
\end{lemma}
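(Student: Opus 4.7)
The plan is to prove this combinatorial statement about finite trees with the explicit bound $K(s) = 2s$; only the existence of some function of $s$ is needed, and the argument is a direct edge-counting.

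First I would fix $\Gamma\subseteq\hberk$ having $s$ edges (where by an edge I mean a maximal closed segment between consecutive branch points and/or endpoints of $\Gamma$) and a connected subtree $\Gamma_0\subseteq\Gamma$. The central observation is the decomposition
$$\Gamma \setminus \Gamma_0 \ =\ \bigcup_{e}\,(e\setminus\Gamma_0),$$
where the union runs over the $s$ edges of $\Gamma$. This reduces the question to counting components edge by edge.

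Next, for every edge $e$ I would show that $e\setminus\Gamma_0$ has at most two connected components. The key point is that $e\cap\Gamma_0$ is a \emph{convex} subset of $e$, i.e.\ a closed subsegment, a single point, or empty. This follows from the unique path-connectedness of $\Gamma$ as a tree: if $p,q\in e\cap\Gamma_0$, the unique path from $p$ to $q$ in $\Gamma$ is simultaneously the subsegment of $e$ joining them (since $e$ is itself a segment) and a path inside the connected set $\Gamma_0$, forcing the entire subsegment of $e$ to lie in $\Gamma_0$. Removing a closed subsegment (or a point, or nothing) from a closed segment produces at most two pieces. Since every component of $\Gamma\setminus\Gamma_0$ is a union of components of various $e\setminus\Gamma_0$, each such component contains at least one of these edge pieces, and the total number of components is bounded by $\sum_e\#\{\text{components of }e\setminus\Gamma_0\}\leq 2s$. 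Thus $K(s) = 2s$ is admissible.

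There is no real obstacle; the lemma is a routine combinatorial fact about finite trees, and the only care needed is verifying the convexity of $e\cap\Gamma_0$ inside $e$. The slight slackness in the bound ($2s$ rather than $s$) reflects the degenerate case in which $\Gamma_0$ meets the interior of a single edge in a proper subsegment, which the trivial example $s=1$ shows cannot be avoided.
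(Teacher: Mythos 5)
Your proof is correct, and it takes a genuinely different (and arguably cleaner) route than the paper. The paper proves the same bound $K(s)=2s$ by induction on $s$: it removes a leaf edge $e$ from $\Gamma$, applies the inductive hypothesis to $\Gamma\setminus e$ and the correspondingly truncated $\Gamma_0$, and then observes that adding $e$ back can contribute at most two further components. You instead give a one-shot edge-count: you decompose $\Gamma\setminus\Gamma_0$ as $\bigcup_e(e\setminus\Gamma_0)$, prove the key structural fact that $e\cap\Gamma_0$ is convex in each edge $e$ (so $e\setminus\Gamma_0$ has at most two pieces), and bound the number of components of the whole by the total number of edge-pieces. The convexity argument via unique path-connectedness is exactly the point that the paper's induction keeps implicit at each step; making it explicit lets you avoid induction entirely and also sidesteps the slightly delicate bookkeeping in the inductive step about whether the truncated $\hat\Gamma_0$ remains a connected subtree. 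Both arguments yield the same constant $2s$, which as you note is already tight for $s=1$. One minor remark: your final inequality $\#\{\text{components of }\Gamma\setminus\Gamma_0\}\le\sum_e\#\{\text{components of }e\setminus\Gamma_0\}$ deserves the one-line justification you implicitly give, namely that every component of $\Gamma\setminus\Gamma_0$ contains at least one edge-piece because $\Gamma\setminus\Gamma_0$ is the union of the edge-pieces; with that stated the proof is complete.
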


\begin{proof}
we proceed by induction on the number of edges of $\Gamma$. If $\Gamma$ has one edge, then $\Gamma$ is an interval and any connected subset is again an interval, so by removing a subinterval we form at most 2 connected components. Hence $K(\Gamma) = 2$, which establishes the claim when $s=1$.

Now let $\Gamma$ be a tree with $s>1$ edges, and let $\Gamma_0\subseteq \Gamma$ be a connected subtree. Let $e$ be an edge of $\Gamma$ such that $\Gamma\setminus e$ is connected, and let $\hat{\Gamma}$ be the tree formed by removing $e$ from $\Gamma$. Similarly, let $\hat{\Gamma_0}$ be the tree formed by removing from $\Gamma_0$ any part that intersects $e$. By induction, $\hat{\Gamma}\setminus \hat{\Gamma_0}$ has at most $K(\hat{\Gamma}\setminus\hat{\Gamma_0}) = 2(s-1)$ components. If we consider the edge $e$ alone, then it will have at most two components in $e\setminus (e\cap \Gamma_0)$; thus by adding $e$ back into $\hat{\Gamma}$ and $\hat{\Gamma_0}$ we will add at most two components to $\hat{\Gamma} \setminus \hat{\Gamma_0}$ (in general, only one, unless $\Gamma_0\subseteq e$), thus there are at most $2(s-2) + 2 = 2s$ components of $\Gamma\setminus \Gamma_0$, i.e $K(\Gamma) = 2s$.
\end{proof}
In particular, the preceeding lemma shows that $N(n, \Gamma)$ is bounded above by $2s$, where $s$ is the number of edges of $\Gamma$. 

In the following two subsections, we compute the Laplacian of $f_n$ on each of the $\Gamma^{(n)}_i$ for $i\geq 0$; the results are combined to give the Laplacian on $\Gamma^{(n)}$ (Lemma \ref{lem:lapgamman}) and finally the Laplacian of $\frac{1}{d^{2n}-d^n}\ord\Res_{\phi^n}(x) - g_{\mu_\phi}(x,x)$ on $\Gamma$ (Proposition \ref{prop:lapgamma}).

\subsubsection{The Laplacian of $f_n$ on $\Gamma^{(n)}_0$}
Recall that $\Gamma^{(n)}_0$ is either the intersection of $\Gamma$ with $\Gamma_{\widehat{\textrm{FR}}, n}$, or it consists of the unique point in $\Gamma_{\widehat{FR}, n}$ lying closest to $\Gamma$; in either case, we derive a formula for $\Delta_{\Gamma^{(n)}_0}(f_n)$, which is given below in Proposition \ref{prop:lapgamma0}. As a first step in this direction we recall:

\begin{cor}\label{cor:defncrucialmeasures}$($Rumely, \cite{Ru2}$)$
Let $\phi(z)\in K(z)$ have degree $d\geq 2$. Then $$\Delta_{\Gamma_{\widehat{FR}}} (\ord\Res_{\phi}(\cdot)) = 2(d^2-d) (\mu_{\Gamma_{\widehat{FR}}, \textrm{Br}} - \nu_{\phi})\ .$$
\end{cor}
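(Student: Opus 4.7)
The proof amounts to a pointwise verification at each vertex of $\Gamma_{\widehat{FR}}$. Since for a CPA function on a finite graph we have
\[
\Delta_{\Gamma_{\widehat{FR}}}(\ord\Res_\phi) \;=\; -\sum_{P\in\Gamma_{\widehat{FR}}}\Bigl(\sum_{\vv\in T_P\Gamma_{\widehat{FR}}} \partial_\vv(\ord\Res_\phi)(P)\Bigr)\,\delta_P,
\]
and since the RHS of the claimed identity simplifies, after substituting the definitions of $\mu_{Br,\Gamma_{\widehat{FR}}}$ and $\nu_\phi$, to
\[
(d^2-d)\sum_P (2-v_{\Gamma_{\widehat{FR}}}(P))\,\delta_P \;-\; 2d\sum_P w_\phi(P)\,\delta_P,
\]
the task reduces to proving, at each $P\in\Gamma_{\widehat{FR}}$, the scalar identity
\[
-\sum_{\vv\in T_P\Gamma_{\widehat{FR}}}\partial_\vv(\ord\Res_\phi)(P) \;=\; (d^2-d)\bigl(2-v_{\Gamma_{\widehat{FR}}}(P)\bigr) \;-\; 2d\,w_\phi(P).
\]

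The plan is to case-split on the dynamical type of $P$ exactly as in Section~\ref{subsect:slopesagain} (in fact the computations for $f_n$ with $n=1$ carried out in Lemmas~\ref{lem:slopenotw} and~\ref{lem:slopew} give model computations that can be adapted directly). Specifically I would use Propositions 5.2--5.4 of \cite{Ru2}, which express $\partial_\vv(\ord\Res_\phi)(P)$ in terms of the surplus function $s_\phi(P,\vv)$ when $P$ is fixed (with separate formulas for the id-indifferent versus non-id-indifferent case), and in terms of the fixed-point count $\#F_\phi(P,\vv)$ when $P$ is not fixed. For each case I would sum these expressions over $\vv\in T_P\Gamma_{\widehat{FR}}$, and then invoke the First Identification Lemma (\cite{Ru2}, Lemma~2.1) to translate $s_\phi(P,\vv)$ and $\#F_\phi(P,\vv)$ into dynamical data: each direction $\vv$ in $T_P\Gamma_{\widehat{FR}}$ with $\B_P(\vv)^-$ containing a classical fixed point contributes to the count $v(P)$ entering $w_\phi(P)$, while the $N_{\text{Shearing}}(P)$ and $\deg_\phi(P)-1$ contributions will emerge from the surplus formulae at fixed $P$.

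The endpoints of $\Gamma_{\widehat{FR}}$ require a separate word: these are the truncation points $Q_i$ chosen near the classical fixed points $\alpha_i$, and by the choice of truncation $\ord\Res_\phi$ is affine on $[Q_i,\alpha_i]$ with constant slope equal to the slope of $\ord\Res_\phi$ in the direction of $\alpha_i$ at $Q_i$. This ensures that the single outgoing slope at an endpoint $Q_i$ of $\Gamma_{\widehat{FR}}$ is determined by the same slope formula (applied in the direction of $\alpha_i$), and moreover that no weight sits on $(Q_i,\alpha_i]\setminus\{Q_i\}$ so no crucial mass is lost by the truncation. A sanity check on total mass is available: the RHS has total mass $2(d^2-d)(1 - 1)=0$ (since $\mu_{Br,\Gamma_{\widehat{FR}}}$ and $\nu_\phi$ are both probability measures, the latter by $\sum_P w_\phi(P)=d-1$), matching the LHS which is a Laplacian.

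The main obstacle is the bookkeeping at fixed points $P$: one must carefully match the $\sum_\vv s_\phi(P,\vv)$ contribution plus the boundary contributions from directions $\vv$ with $(\phi)_*(\vv)\neq\vv$ that contain a classical fixed point against the two summands $\deg_\phi(P)-1$ and $N_{\text{Shearing}}(P)$ composing $w_\phi(P)$, while simultaneously accounting for the $(2-v(P))$ branching contribution. Once the identity is verified case-by-case at every $P$, summing over $P\in\Gamma_{\widehat{FR}}$ gives the corollary.
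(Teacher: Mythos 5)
This corollary is quoted in the paper as a result of Rumely (\cite{Ru2}, Corollary~6.5 / Theorem~7.1), and the paper gives no proof of it; there is therefore no in-paper argument to compare against. That said, your plan is a faithful reconstruction of Rumely's proof: the vertex-by-vertex reduction is right, the normalization $2(d^2-d)\cdot\frac{1}{d-1}=2d$ is right, the appeal to Propositions~5.2--5.4 and the First Identification Lemma of \cite{Ru2} is exactly what is needed, and the mass check at $0$ is a correct sanity check. Your plan also mirrors the computations this paper itself carries out for $f_n$ in Lemmas~\ref{lem:slopenotw} and~\ref{lem:slopew}, which are the same slope bookkeeping on a tree that meets $\Gamma_{\widehat{FR},n}$ in one point; the difference is that here the tree is $\Gamma_{\widehat{FR}}$ itself, so the formulae must be applied to every vertex, including branch points. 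The one point worth spelling out more explicitly when writing this up is that the truncation points $Q_i$ contribute: they have valence $1$ in $\Gamma_{\widehat{FR}}$, so $2-v(Q_i)=1$ puts branching mass there, and the single outgoing slope at $Q_i$ must be seen to equal $-(d^2-d)$ (up to sign conventions) so that the discrepancy exactly accounts for that branching mass; this is where the hypothesis that $\ord\Res_\phi$ has constant slope on $[Q_i,\alpha_i]$ enters. Aside from that, there is no gap: this is essentially the same approach as the cited source.
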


\noindent As we mentioned above, the truncation used by Rumely is slightly different than ours: we also required that the truncated ends of the tree $\Gamma_{\widehat{FR}}$ lie outside $B_\rho(\zetaG, R_\Gamma)$. The result given in Corollary~\ref{cor:defncrucialmeasures} is the same with our definition of $\Gamma_{\widehat{FR}}$. 

Corollary~\ref{cor:defncrucialmeasures} is easily generalized to higher iterates, and combined with Lemma \ref{lem1} above we obtain

\begin{lemma}\label{lem:lapGammaFRn}
If $\Gamma_{\widehat{FR},n}$ is as defined above, then $$\Delta_{\Gamma_{\widehat{FR},n}}\left(\dfrac{1}{d^{2n}-d^n} \ord\Res_{\phi^n}(\cdot) + \log(\delta(\cdot, \cdot)_{\infty})\right) =  - 2 \nu_{\phi^n} + 2\delta_{r_{\Gamma_{\widehat{FR},n}}(\infty)\ .}$$
\end{lemma}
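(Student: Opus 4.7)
The proof is essentially a bookkeeping exercise that combines two prior results by linearity of the Laplacian. My plan is to apply the Laplacian $\Delta_{\Gamma_{\widehat{FR},n}}$ separately to the two summands $\frac{1}{d^{2n}-d^n}\ord\Res_{\phi^n}$ and $\log(\delta(\cdot,\cdot)_\infty)$, and then observe that the branching measures appearing in the two results cancel exactly, leaving the two terms on the right.

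More concretely, first I would invoke the iterated form of the quoted Rumely corollary, replacing $\phi$ with $\phi^n$ and $\Gamma_{\widehat{FR}}$ with $\Gamma_{\widehat{FR},n}$, to get
\[
\Delta_{\Gamma_{\widehat{FR},n}}\!\left(\ord\Res_{\phi^n}(\cdot)\right) \;=\; 2(d^{2n}-d^n)\bigl(\mu_{\Gamma_{\widehat{FR},n},\mathrm{Br}} - \nu_{\phi^n}\bigr).
\]
Dividing by $d^{2n}-d^n$, the contribution of the first summand is $2\mu_{\Gamma_{\widehat{FR},n},\mathrm{Br}} - 2\nu_{\phi^n}$. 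Next I would apply Lemma~\ref{lem1} with $\Gamma=\Gamma_{\widehat{FR},n}$ to obtain
\[
\Delta_{\Gamma_{\widehat{FR},n}}\!\bigl(\log(\hsia{x}{x}{\infty})\bigr) \;=\; -2\mu_{\mathrm{Br},\Gamma_{\widehat{FR},n}} + 2\delta_{r_{\Gamma_{\widehat{FR},n}}(\infty)}.
\]
Summing these by linearity, the two copies of $2\mu_{\Gamma_{\widehat{FR},n},\mathrm{Br}}$ cancel and what remains is precisely $-2\nu_{\phi^n} + 2\delta_{r_{\Gamma_{\widehat{FR},n}}(\infty)}$, as claimed.

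The only mildly nontrivial point — and the one place care is needed — is justifying the iterated form of Rumely's corollary on the \emph{truncated} tree $\Gamma_{\widehat{FR},n}$ rather than on the full tree $\Gamma_{FR,n}$. This is why the definition of $\Gamma_{\widehat{FR},n}$ requires each truncation endpoint $Q_i$ to be chosen so that the slope of $\ord\Res_{\phi^n}(\cdot)$ is constant along $[Q_i,\alpha_i]$ and no branch points of $\Gamma_{FR,n}$ lie in this segment: under this choice, truncation does not introduce a net boundary slope contribution at $Q_i$, the crucial weights $w_{\phi^n}$ are unaffected (since classical fixed points carry no weight), and the branching measure of the truncated graph replaces that of the full graph consistently on both sides. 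Beyond this verification the proof is a direct calculation, with no real obstacle.
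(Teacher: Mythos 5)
Your proposal is correct and follows exactly the paper's own proof: apply the (iterated) Rumely corollary to the $\frac{1}{d^{2n}-d^n}\ord\Res_{\phi^n}$ term, apply Lemma~\ref{lem1} to the $\log\delta(\cdot,\cdot)_\infty$ term, and cancel the branching measures by linearity. The closing remark on why the corollary transfers to the truncated tree $\Gamma_{\widehat{FR},n}$ is a reasonable amplification of a point the paper leaves implicit, but the substance of the argument is the same.
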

\begin{proof}
This is a straightforward computation: by Lemma~\ref{lem1} and Corollary~\ref{cor:defncrucialmeasures}
\begin{align*}
\Delta_{\Gamma_{\widehat{FR},n}}\left(\dfrac{1}{d^{2n}-d^n} \ord\Res_{\phi^n}(\cdot) +\log(\delta(\cdot, \cdot)_\infty)\right) & = \left(2 \mu_{\Gamma_{\widehat{FR},n}, Br} - 2 \nu_{\phi^n}\right) + \left(-2 \mu_{\Gamma_{\widehat{FR},n}, Br} + 2\delta_{r_{\Gamma_{\widehat{FR},n}}(\infty)}\right)\\
&= - 2 \nu_{\phi^n} + 2\delta_{r_{\Gamma_{\widehat{FR},n}}(\infty)}\ .
\end{align*}

\end{proof}

Finally we apply this to the Laplacian on $\Gamma^{(n)}_0$ by taking retractions: 

\begin{prop}\label{prop:lapgamma0} The Laplacian of $f_n$ on $\Gamma^{(n)}_0$ is given
$$\Delta_{\Gamma^{(n)}_0}(f_n) = -2 (r_{\Gamma_{\widehat{FR}, n}, \Gamma^{(n)}_0})_* \nu_{\phi^n} + 2\delta_{r_{\Gamma^{(n)}_0}(\infty)}\ .$$
\end{prop}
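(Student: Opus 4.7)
My plan is to obtain this as an immediate restriction of Lemma~\ref{lem:lapGammaFRn} via the compatibility of Laplacians with retractions. Since $\Gamma^{(n)}_0 = \Gamma \cap \Gamma_{\widehat{FR},n}$ sits inside $\Gamma_{\widehat{FR},n}$ as a connected subtree, and $f_n \in \BDV(\Gamma_{\widehat{FR},n})$ (it is $\CPA$ on the edges of that finite tree by the slope formulae of \cite{Ru2}), the fundamental retraction property recalled in Section~\ref{sect:convandnot} gives
\[
\Delta_{\Gamma^{(n)}_0}(f_n) \;=\; \bigl(r_{\Gamma_{\widehat{FR},n},\,\Gamma^{(n)}_0}\bigr)_{*} \,\Delta_{\Gamma_{\widehat{FR},n}}(f_n)\ .
\]

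Next I would substitute the formula from Lemma~\ref{lem:lapGammaFRn}:
\[
\Delta_{\Gamma_{\widehat{FR},n}}(f_n) \;=\; -2\nu_{\phi^n} + 2\delta_{r_{\Gamma_{\widehat{FR},n}}(\infty)}\ ,
\]
and push this forward under $r_{\Gamma_{\widehat{FR},n},\,\Gamma^{(n)}_0}$ by linearity of the pushforward. The first term gives precisely $-2\,(r_{\Gamma_{\widehat{FR},n},\,\Gamma^{(n)}_0})_*\nu_{\phi^n}$, which matches the first term of the claim.

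For the second term I would use transitivity of retractions: since retracting $\infty$ first to $\Gamma_{\widehat{FR},n}$ and then to $\Gamma^{(n)}_0$ yields the same point as retracting $\infty$ directly to $\Gamma^{(n)}_0$ (this follows from the unique path connectedness of $\pberk$), we have
\[
\bigl(r_{\Gamma_{\widehat{FR},n},\,\Gamma^{(n)}_0}\bigr)_{*}\,\delta_{r_{\Gamma_{\widehat{FR},n}}(\infty)} \;=\; \delta_{r_{\Gamma^{(n)}_0}(\infty)}\ ,
\]
giving the second term $2\delta_{r_{\Gamma^{(n)}_0}(\infty)}$ of the claim.

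The only subtlety I anticipate is a bookkeeping one: one must check that the restriction of $f_n$ to $\Gamma_{\widehat{FR},n}$ is genuinely in the class for which the pushforward formula applies, and that the convention for the pushforward of a Dirac mass along a retraction is $r_{*}\delta_P = \delta_{r(P)}$ (which it is, by definition of the pushforward of a Radon measure). Both are standard and are already implicitly used in the derivation of Lemma~\ref{lem:lapGammaFRn}, so no real obstacle arises; the argument is essentially a one-line application of the two facts above.
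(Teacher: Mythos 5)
Your proposal is correct and follows exactly the paper's own argument: the paper also obtains the result by retracting the expression from Lemma~\ref{lem:lapGammaFRn} to $\Gamma^{(n)}_0$ via the compatibility of the Laplacian with retractions, and merely states this in one line. The extra details you supply (linearity of pushforward, transitivity of retractions for the Dirac term, and the $\BDV$/$\CPA$ hypothesis check) are the standard facts being silently invoked.
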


\begin{proof}
Since $\Gamma^{(n)}_0\subseteq \Gamma_{\widehat{FR}, n}$, we have $$\Delta_{\Gamma^{(n)}_0}(f_n) = (r_{\Gamma_{\widehat{FR}, n}, \Gamma^{(n)}_0})_* \Delta_{\Gamma_{\widehat{FR}, n}}(f_n)\ .$$ The result now holds by explicitly retracting the expression appearing in Lemma~\ref{lem:lapGammaFRn}.
\end{proof}

\subsubsection{The Laplacian of $f_n$ on $\Gamma^{(n)}_i$ for $i\geq 1$}

We now aim to compute $\Delta_{\Gamma}(f_n)$ on the trees $\Gamma^{(n)}_i$ for $i\geq 1$; these subtrees share exactly one point in common with $\Gamma_{\widehat{FR},n}$, hence we can apply the results of Section \ref{subsect:slopesagain}. We begin with the following:

\begin{prop}\label{prop:laponept}
Fix $i\geq 1$, and for brevity write $\Pi= \Gamma^{(n)}_i$. Let $w^{(n)}_i$ denote the point in $\Pi\cap \Gamma_{\widehat{FR},n}$. We have 
\begin{enumerate} 
\item Every fixed point of $\phi$ in $\Pi$ that is not id-indifferent is additively indifferent. If $A_{\Pi}$ is the number of fixed points of $\phi$ in $\Pi$, and $E_{\Pi}$ is the number of edges of $\Pi$, then $$A_{\Pi} \leq E_{\Pi}\ .$$

\item The Laplacian of $f_n$ on $\Pi$ is $$\Delta_{\Pi} (f_n) = \dfrac{2}{d^n-1}\left( \Theta_{\Pi} + \Omega_{n,i} \delta_{w^{(n)}_i}\right)\ ,$$ where $$\Theta_{\Pi} = -\sum_{\substack{Q\in \Pi\setminus\{w^{(n)}_i\},\\ \phi^n(Q) = Q\\ Q \text{ is not id-indifferent}}} (v_\Pi(Q)-1)\delta_Q- \sum_{\substack{ P\in \Pi\setminus\{w^{(n)}_i\},\\ \phi^{n}(P)\neq P}} (v_\Pi(P)-2)\delta_P\ , $$ and $$\Omega_{n,i} = \begin{cases} v_\Pi(w^{(n)}_i), & \textrm{if } w^{(n)}_i \text{ is not id-indifferent,} \\ 0, & \textrm{if } w^{(n)}_i \text{ is id-indifferent.} \end{cases}\ .$$ are measures depending only on $\Pi$, $\phi$, and $n$.

\item If $B_\Pi = \sum_{P\in \Pi} |v_\Pi(P)-2| + (E_\Pi+1) \cdot \max_{P\in \Pi} v_\Pi(P)$, then 

$$|\Delta|_{\Pi} (f_n) \leq \dfrac{2}{d^n-1} B_{\Pi}\ .$$
\end{enumerate}
\end{prop}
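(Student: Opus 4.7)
The proposition has three parts, and I would handle them in order.

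For part~(1), I would start by showing that every multiplicatively indifferent and every type~II repelling fixed point of $\phi^n$ lies in $\Gamma_{\widehat{FR},n}$. Type~II repelling fixed points lie in $\Gamma_{FR,n}$ by definition. For a multiplicatively indifferent $P$, the reduction at $P$ is of the form $z \mapsto az$ with $a \neq 0, 1$, whose two fixed points $0$ and $\infty$ lift (via Hensel) to distinct classical fixed points of $\phi^n$ in the two corresponding tangent directions at $P$, so $P$ lies on the path between them and hence in $\Gamma_{FR,n}$. By choice of the truncation points $Q_i$ sufficiently close to each classical fixed point $\alpha_i$ (which is permissible because the slope of $\ord\Res_{\phi^n}$ is constant on $(Q_i, \alpha_i]$ and no branch points lie there), these type~II fixed points also lie in $\Gamma_{\widehat{FR},n}$. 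Consequently any fixed point of $\phi^n$ in $\Gamma_i^{(n)} \setminus \{w_n^i\}$, which lies outside $\Gamma_{\widehat{FR},n}$ by construction, must be id-indifferent or additively indifferent. For the count $A_\Gamma \leq E_\Gamma$, I would exploit that each additively indifferent $Q$ has a \emph{unique} fixed tangent direction $\vec v_Q$; after subdividing $\Gamma$ so that each such $Q$ becomes a vertex, the assignment sending $Q$ to the distinguished edge emanating from $Q$ in the direction $\vec v_Q$ should be injective.

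For part~(2) I would assemble the Laplacian directly from the slope formulae. Lemma~\ref{lem:slopenotw} gives the total outgoing slope $\sum_{\vec v}\partial_{\vec v}(f_n)(P)$ at each $P \in \Gamma \setminus \{w_n^i\}$ in three cases ($P$ not fixed, $P$ fixed non-id-indifferent, $P$ fixed id-indifferent), and by part~(1) the middle case is exactly the additively indifferent case. Lemma~\ref{lem:slopew} handles $P = w_n^i$. Substituting these into the definition $\Delta_\Gamma(f_n) = -\sum_{P \in \Gamma}\bigl(\sum_{\vec v \in T_P\Gamma}\partial_{\vec v}(f_n)(P)\bigr)\delta_P$, collecting the common factor $\frac{2}{d^n - 1}$, and grouping the three cases separately yields exactly $\Theta_\Gamma + \Omega_{n,i}\delta_{w_n^i}$ as defined in the statement.

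For part~(3), the total variation bound follows by taking absolute values term by term in the formula from part~(2). The fixed non-id-indifferent contribution is at most $A_\Gamma \cdot \max_P v(P) \leq E_\Gamma \cdot \max_P v(P)$ by part~(1); the non-fixed contribution $\sum_P|v(P) - 2|$ is controlled using the tree identity $\sum_P(v(P) - 2) = -2$ together with the crude bound $|v(P) - 2| \leq (v(P) - 2) + 2\cdot\mathbf{1}_{v(P) \leq 1}$, which produces the $\sum_P (v(P) - 2)$ term of $B_\Gamma$; and the atomic mass at $w_n^i$ contributes at most $\max_P v(P)$. Adding the $E_\Gamma$ copies of $\max_P v(P)$ from the fixed-point contributions and the extra one at $w_n^i$ gives the $(E_\Gamma + 1)\max_P v(P)$ factor, completing the estimate $|\Delta|_\Gamma(f_n) \leq \frac{2}{d^n - 1} B_\Gamma$.

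The principal obstacle is the injectivity claim underlying $A_\Gamma \leq E_\Gamma$ in part~(1): two distinct additively indifferent fixed points may lie on a common segment of $\Gamma$ with their fixed tangent directions both pointing along that segment, so producing a clean injection requires a careful ordering of points along each branch of the subdivided tree and an argument that the edges immediately succeeding each $Q$ in its fixed direction are distinct.
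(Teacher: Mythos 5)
Your treatment of parts~(2) and~(3) follows the paper's approach: assemble $\Delta_\Gamma(f_n)$ from Lemmas~\ref{lem:slopenotw} and~\ref{lem:slopew}, then bound the total variation term-by-term, using part~(1) to control the number of additively indifferent fixed points. Your direct argument (via Hensel lifting of the two fixed points of the degree-one reduction) that every multiplicatively indifferent fixed point lies in $\Gamma_{\widehat{FR},n}$ is a reasonable substitute for the paper's citation of \cite{Ru2}, Corollary~10.6.

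The genuine gap is the one you flag yourself: the injectivity of the assignment $Q \mapsto$ (edge out of $Q$ in the unique fixed direction $\vec{v}_Q$) in part~(1). As you observe, if two additively indifferent fixed points $Q_1, Q_2$ share an edge with $\vec{v}_{Q_1}$ pointing toward $Q_2$ and $\vec{v}_{Q_2}$ pointing toward $Q_1$, a purely combinatorial edge-count cannot rule this out, and no ordering of vertices along the branch by itself resolves it. The paper does not take this route. Instead it argues dynamically: by the Second Persistence Lemma (\cite{Ru2}, Lemma~9.5), each additively indifferent fixed point $P$ bounds a component of the locus of id-indifference in the direction $\vec{v}_a$, and by \cite{Ru2}, Corollary~10.2 the closure of that component contains at least two classical fixed points. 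Since $\Gamma = \Gamma^{(n)}_i$ meets $\Gamma_{\widehat{FR},n}$ only at $w_n^i$, this pins down where such components sit and excludes a second additively indifferent fixed point on the segment $(P, w_n^i)$, yielding $A_\Gamma \leq E_\Gamma$. Your combinatorial subdivision idea would need a dynamical input of exactly this kind to close the loop; the scenario you worry about (mutually facing fixed directions on one edge) is the thing Corollary~10.2 of \cite{Ru2} is being used to forbid.

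One smaller remark on part~(3): your proposed inequality $|v(P) - 2| \leq (v(P) - 2) + 2\cdot\mathbf{1}_{v(P)\leq 1}$, summed over $\Gamma$, gives $\sum_P(v(P)-2)$ plus twice the number of leaves, so it does not literally reduce to the $\sum_P(v(P)-2)$ appearing in $B_\Gamma$ as you claim. You would need to either absorb the extra leaf term into the $(E_\Gamma+1)\max_P v(P)$ summand or adjust the bookkeeping; the paper is itself terse at this point, but you should not assert that your inequality produces exactly the stated term when it produces something larger.
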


\begin{proof}

For (1), note that by \cite{Ru2}, Corollary 10.6 and the definition of $\Gamma_{FR,n}$, all of the multiplicatively indifferent and repelling fixed points of $\phi^n$ lie in $\Gamma_{FR,n}$; hence any fixed point $P$ of $\phi^n$ in $\Pi$ that is not id-indifferent must be additively indifferent. Let $U_{\textrm{id}}^{(n)}$ denote the locus of id-indifference for $\phi^n$. 

If $P$ is an additively indifferent fixed point of $\phi^n$, it has a unique fixed direction $\vv_*$ with multiplier 1. By the Second Persistence Lemma (\cite{Ru2}, Lemma 9.5), it follows that there is a segment $(P, P_0)$ in $B_P(\vv_*)^-$ on which $\phi^n$ is id-indifferent. Hence $P$ sits on the boundary of $U_{\textrm{id}}^{(n)}$.

Moreover, by Corollary 10.2 of \cite{Ru2}, the closure of the component of the locus of id-indifference which $P$ bounds must contain at least two classical fixed points. Hence there can be no other additively indifferent fixed points in the segment $(P, w^{(n)}_i)$. The number of additively indifferent fixed points in $\Pi$ is therefore bounded by the number of edges $E_\Pi$ in $\Pi$. This completes (1).

To prove (2), we first note that $w^{(n)}_i = r_{\Gamma_i^{(n)}}(\infty)$; this follows from our assumption that $\infty$ is a type I fixed point of $\phi$ (hence it lies in each $\Gamma_{FR, n}$) together with our truncation of $\Gamma_{FR,n}$ (which extended beyond the ball $B_\rho(\zetaG, R_\Gamma)$ that contained $\Gamma$.  Consequently, applying the slope formula appearing in Lemma~\ref{lem:slopenotw}, there is no contribution from $i(P)$, while in the formula in Lemma~\ref{lem:slopew}, we see that $2+i(P) = 0$. The expression for the Laplacian in (2) now follows by a straightforward application of the definition of the Laplacian on a finite subgraph, along with the slope formulae derived in Lemmas \ref{lem:slopenotw} and \ref{lem:slopew}.

 For (3), the contribution from each fixed point $P$ to $\left|\Delta_\Pi\right|(f_n)$ is at most $v_\Pi(P)-1$, and by parts (1) and (2) above there are at most $E_\Pi$ additively indifferent fixed points in $\Pi$. Therefore the first sum in $\Theta_\Pi$ is bounded by $E_\Pi\cdot \max_{P\in \Pi} (v_\Pi(P) -1) \leq E_\Pi\cdot \max_{P\in \Pi} v_\Pi(P)$. The second sum in $\Theta_\Pi$ is evidently bounded above by $\sum_{P\in \Pi} |v_\Pi(p)-2|$. This, together with the $v_\Pi(P)$ term that bounds $\Omega_n \delta_{w^{(n)}_i}$, gives the result.
\end{proof}

Recall that $ N(n, \Gamma)$ is the number of components of $\Gamma^{(n)}\setminus \Gamma_0^{(n)}$. Applying the previous proposition to each of the trees $\Gamma^{(n)}_i$ ($i\geq 1$), we have 

\begin{lemma}\label{lem:lapgammak}
Let $\Theta_{\Gamma_{i}^{(n)}}, \Omega_{n,i}$ be the measure and the constant from Proposition~\ref{prop:laponept}. There is a constant $D_\Gamma$ such that 
$$\left|\sum_{i=1}^{N(n, \Gamma)}\left(\Theta_{\Gamma^{(n)}_i} + \Omega_{n,i} \delta_{w^{(n)}_i}\right)\right|< D_\Gamma\ .$$
\end{lemma}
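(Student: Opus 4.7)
The plan is to bound each summand in total variation using Proposition~\ref{prop:laponept}(3), and then show that the combinatorial data controlling these bounds is itself controlled uniformly by $\Gamma$. Recall from part (2) of that proposition that
$$\Delta_{\Gamma^{(n)}_i}(f_n) \;=\; \frac{2}{d^n-1}\Big(\Theta_{\Gamma^{(n)}_i} + \Omega_{n,i}\delta_{w^n_i}\Big),$$
so applying part (3) gives $\bigl|\Theta_{\Gamma^{(n)}_i} + \Omega_{n,i}\delta_{w^n_i}\bigr| \leq B_{\Gamma^{(n)}_i}$. The triangle inequality for measures then reduces the problem to producing a bound
$$\sum_{i=1}^{N_n} B_{\Gamma^{(n)}_i} \;\leq\; D_\Gamma$$
with $D_\Gamma$ depending only on $\Gamma$.

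The combinatorial input is the following. Since $\Gamma^{(n)} = \Gamma \cup \Gamma_{\widehat{FR},n} \cup [w_n, \Gamma_{\widehat{FR},n}]$, the subgraph $\Gamma^{(n)}\setminus \Gamma^{(n)}_0$ is built from $\Gamma$ and at most one additional segment joining $\Gamma$ to $\Gamma_{\widehat{FR},n}$. Hence the total number of edges lying outside $\Gamma^{(n)}_0$, summed over all $\Gamma^{(n)}_i$ with $i\geq 1$, is at most $E_\Gamma + 1$; and every valence appearing in any $\Gamma^{(n)}_i$ is bounded by the corresponding valence in $\Gamma^{(n)}$, which in turn exceeds the valence in $\Gamma$ by at most one at the single point $w_n$. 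Lemma~\ref{lem:treelemma}, applied to $\Gamma^{(n)}$ with subtree $\Gamma^{(n)}_0$, gives $N_n \leq 2(E_\Gamma + 1)$, so the number of summands is uniformly controlled.

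The final step is to assemble these estimates. Using the standard tree identity $\sum_{P\in T}\lvert v_T(P)-2\rvert \leq 2E_T$ for a finite tree $T$ with $E_T$ edges, I would bound the branch-valence contribution by
$$\sum_{i=1}^{N_n} \sum_{P\in \Gamma^{(n)}_i} \bigl|v_{\Gamma^{(n)}_i}(P)-2\bigr| \;\leq\; 2\sum_i E_{\Gamma^{(n)}_i} \;\leq\; 2(E_\Gamma+1),$$
while the max-valence term contributes at most $N_n\cdot(E_\Gamma+2)\cdot(\max_{P\in\Gamma}v_\Gamma(P)+1)$. Adding these and using $N_n \leq 2(E_\Gamma+1)$ produces an explicit constant $D_\Gamma$ depending only on the edge count and maximum valence of $\Gamma$.

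The main obstacle is not analytical but bookkeeping: one must be sure that every geometric quantity entering $B_{\Gamma^{(n)}_i}$ (edges of $\Gamma^{(n)}_i$, valences in $\Gamma^{(n)}_i$, and the maximum valence) is bounded by fixed combinatorial data of $\Gamma$, independently of how $\Gamma^{(n)}_0 = \Gamma \cap \Gamma_{\widehat{FR},n}$ happens to sit inside $\Gamma^{(n)}$ for each $n$. The crucial and slightly delicate point is that splitting a tree at the points of $\Gamma^{(n)}_0$ can only decrease valences in the resulting pieces, so no unexpected $n$-dependent growth appears and the $d^n-1$ factor in Proposition~\ref{prop:laponept}(3) cancels cleanly against the normalization in part~(2).
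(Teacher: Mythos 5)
Your proof is essentially the paper's argument: bound each term in total variation using Proposition~\ref{prop:laponept}(3), control the edge counts and valences of the pieces $\Gamma^{(n)}_i$ in terms of $\Gamma$ (noting that these pieces live in $\Gamma$ together with the one extra segment $[w_n, \Gamma_{\widehat{FR},n}]$), and use Lemma~\ref{lem:treelemma} to bound the number of pieces $N_n$. You are slightly more careful than the paper in two places—you account for the valence bump by one at $w_n$ and for the one extra edge coming from the joining segment, and you aggregate the edge counts across all pieces rather than bounding each $E_{\Gamma^{(n)}_i}$ separately—so your explicit constant differs from the paper's, but the route and all the substantive ideas coincide.
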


\noindent\textbf{Remark:} The constant $D_\Gamma$ in this Lemma depends only on $\Gamma$, and not on $\Gamma^{(n)}$ or its partitions.

\begin{proof}
Let $\Theta_{\Gamma^{(n)}_i}$ and $\Omega_{n,i}$ be as in the statement of Proposition~\ref{prop:laponept} for the respective $\Gamma^{(n)}_i$. To obtain the constant $D_{\Gamma}$, note that the constant in Proposition \ref{prop:laponept}, part 3 depends only on the maximum valence and number of edges in $\Gamma^{(n)}_i$; the valence is certainly no more than $\max_{P\in \Gamma} v_\Gamma(P)$, and similarly $E_{\Gamma^{(n)}_i} \leq E_\Gamma$. Therefore we can take $$D_\Gamma= K(\Gamma) \cdot \left(\sum_{P\in \Gamma} (v(P)-2) + (E_\Gamma +1) \max_{P\in \Gamma} v_\Gamma(P)\right)\ ,$$ where $K(\Gamma)$ is the constant from Lemma \ref{lem:treelemma}.
\end{proof}

By our choice of decomposition of $\Gamma^{(n)}$, we have \begin{align}\label{eq:decomplap}\Delta_{\Gamma^{(n)}} = \Delta_{\Gamma^{(n)}_0} + \sum_{i=1}^{N(n, \Gamma)} \Delta_{\Gamma^{(n)}_i}\ .\end{align} To see this, note that while the various components $\Gamma^{(n)}_i$ may intersect $\Gamma^{(n)}_0$ at a point $P$, the collection $T_P \Gamma^{(n)}$ is accounted for by taking the Laplacians on \emph{all} of the components. We can therefore compute the Laplacian of $f_n$ on $\Gamma^{(n)}$:

\begin{lemma}\label{lem:lapgamman}
We have that $$\Delta_{\Gamma^{(n)}}(f_n) = -2(r_{\pberk, \Gamma^{(n)}_0})_* \nu_{\phi^n} + 2\delta_{r_{\Gamma^{(n)}_0}(\infty)} + \dfrac{2}{d^n-1}\Lambda_n\ ,$$ where $\Lambda_n$ is a measure supported on $\Gamma^{(n)}$ such that $|\Lambda_n| < D_\Gamma$.
\end{lemma}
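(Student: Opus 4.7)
The plan is to combine the Laplacian decomposition in equation (\ref{eq:decomplap}) with the two structural results already established: Proposition~\ref{prop:lapgamma0} handles the contribution from $\Gamma^{(n)}_0$, while Proposition~\ref{prop:laponept} handles each $\Gamma^{(n)}_i$ for $i \geq 1$. Since $\Gamma^{(n)}_0 = \Gamma^{(n)} \cap \Gamma_{\widehat{FR},n}$ and each $\Gamma^{(n)}_i$ ($i \geq 1$) meets $\Gamma_{\widehat{FR},n}$ in at most the single point $w_n^i$, the hypotheses of those propositions are satisfied.

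First I would substitute the formula from Proposition~\ref{prop:lapgamma0} into the first term of (\ref{eq:decomplap}), giving $-2(r_{\Gamma_{\widehat{FR},n}, \Gamma^{(n)}_0})_* \nu_{\phi^n} + 2\delta_{r_{\Gamma^{(n)}_0}(\infty)}$, which already matches the first two summands of the claimed expression. Then for each $i \geq 1$, I would substitute the formula from Proposition~\ref{prop:laponept}, obtaining $\Delta_{\Gamma^{(n)}_i}(f_n) = \frac{2}{d^n-1}(\Theta_{\Gamma^{(n)}_i} + \Omega_{n,i}\delta_{w_n^i})$. Summing over $i$ yields a term $\frac{2}{d^n-1}\Lambda_n$ where we define
\[
\Lambda_n := \sum_{i=1}^{N_n}\bigl(\Theta_{\Gamma^{(n)}_i} + \Omega_{n,i}\delta_{w_n^i}\bigr).
\]
By construction $\Lambda_n$ is supported on $\bigcup_{i\geq 1} \Gamma^{(n)}_i \subseteq \Gamma^{(n)}$.

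The total mass bound $|\Lambda_n| < D_\Gamma$ is exactly the content of Lemma~\ref{lem:lapgammak}, so it applies verbatim. This is not really an obstacle since the heavy lifting (the uniform bound on $N_n$ via Lemma~\ref{lem:treelemma} and the per-component bound in part (3) of Proposition~\ref{prop:laponept}) was already done. The one subtlety worth checking carefully is that the decomposition (\ref{eq:decomplap}) is consistent at points where the $\Gamma^{(n)}_i$ meet $\Gamma^{(n)}_0$: the tangent directions at such a shared point $w_n^i$ are partitioned among the pieces, so summing Laplacians does indeed recover $\Delta_{\Gamma^{(n)}}$ with no double-counting; the justification given immediately before (\ref{eq:decomplap}) in the paper handles this.

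The main (minor) conceptual step is recognizing that Proposition~\ref{prop:lapgamma0} already absorbs the retraction of $\nu_{\phi^n}$ from $\Gamma_{\widehat{FR},n}$ down to $\Gamma^{(n)}_0$, and that $r_{\Gamma^{(n)}_0}(\infty)$ is the correct retraction of $\infty$ because $\Gamma_{\widehat{FR},n}$ was chosen so as not to be contained in $\B_\rho(\zetaG, R_\Gamma)$, ensuring that the path from $\infty$ to $\Gamma^{(n)}$ first hits $\Gamma^{(n)}_0$ rather than one of the peripheral $\Gamma^{(n)}_i$. With these observations in place, assembling the three pieces yields the claimed identity, and the bound on $|\Lambda_n|$ is immediate from Lemma~\ref{lem:lapgammak}.
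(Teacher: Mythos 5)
Your proposal is correct and follows the same route as the paper's one-line proof: combine the decomposition (\ref{eq:decomplap}) with Proposition~\ref{prop:lapgamma0} and Lemma~\ref{lem:lapgammak} (the latter itself resting on Proposition~\ref{prop:laponept}). You have simply spelled out the substitutions and the definition of $\Lambda_n$ more explicitly, which is a faithful unpacking rather than a different argument.
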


\begin{proof}
Combine Proposition \ref{prop:lapgamma0} and Lemma \ref{lem:lapgammak}, together with the decomposition of the Laplacian given in (\ref{eq:decomplap}).
\end{proof}

Finally we have

\begin{lemma}
$\Delta_{\Gamma} (f_n) = -2(r_{\pberk, \Gamma})_* \nu_{\phi^n} + 2\delta_{r_{\Gamma}(\infty)} + \dfrac{2}{d^n-1} (r_{\pberk, \Gamma})_* \Lambda_n\ .$
\end{lemma}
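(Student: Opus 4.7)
The plan is to obtain the Laplacian on $\Gamma$ by pushing forward the Laplacian on the larger graph $\Gamma^{(n)}$ computed in Lemma~\ref{lem:lapgamman}. Since $\Gamma\subseteq\Gamma^{(n)}$ is a closed, connected subgraph and $f_n\in\BDV(\pberk)$, the compatibility of Laplacians with retractions gives
\begin{equation*}
\Delta_{\Gamma}(f_n) = (r_{\Gamma^{(n)},\Gamma})_*\,\Delta_{\Gamma^{(n)}}(f_n)\ .
\end{equation*}
So I would apply $(r_{\Gamma^{(n)},\Gamma})_*$ term by term to the formula in Lemma~\ref{lem:lapgamman}.

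For the first term, since $\nu_{\phi^n}$ is supported on $\Gamma_{FR,n}$ (hence on $\Gamma_{\widehat{FR},n}$ after an appropriate truncation that does not move support points), the iterated retraction $r_{\Gamma^{(n)},\Gamma}\circ r_{\Gamma_{\widehat{FR},n},\Gamma^{(n)}_0}$ agrees with $r_{\pberk,\Gamma}$ on the support of $\nu_{\phi^n}$, by the uniqueness of paths in $\pberk$ and the fact that factoring through a connected subtree containing a given support point and the target reproduces the global retraction. This gives $(r_{\Gamma^{(n)},\Gamma})_*(r_{\Gamma_{\widehat{FR},n},\Gamma^{(n)}_0})_*\nu_{\phi^n} = (r_{\pberk,\Gamma})_*\nu_{\phi^n}$. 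For the second term, $r_{\Gamma^{(n)},\Gamma}(r_{\Gamma^{(n)}_0}(\infty)) = r_{\Gamma}(\infty)$, by the same transitivity of retractions, so the point mass pushes forward to $\delta_{r_{\Gamma}(\infty)}$. For the third term, $\Lambda_n$ is supported on $\Gamma^{(n)}$, so $(r_{\Gamma^{(n)},\Gamma})_*\Lambda_n = (r_{\pberk,\Gamma})_*\Lambda_n$ (the restriction of $r_{\pberk,\Gamma}$ to $\Gamma^{(n)}$ coincides with $r_{\Gamma^{(n)},\Gamma}$).

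Assembling the three pieces yields the asserted formula. The only subtle point is the transitivity identity $r_{\pberk,\Gamma} = r_{\Gamma^{(n)},\Gamma}\circ r_{\pberk,\Gamma^{(n)}}$ on all of $\pberk$, which follows from the unique path-connectedness of $\pberk$: for any $x\in\pberk$, the first point of $\Gamma$ encountered along a path from $x$ to a fixed basepoint in $\Gamma$ is the same whether one traverses this path directly or stops first at $\Gamma^{(n)}$ and then continues into $\Gamma$. I expect this bookkeeping of retractions to be the only genuine obstacle; once the three push-forwards are identified correctly, the lemma follows immediately from Lemma~\ref{lem:lapgamman}.
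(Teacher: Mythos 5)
Your proposal matches the paper's argument almost line for line: both push forward the formula of Lemma~\ref{lem:lapgamman} along $r_{\Gamma^{(n)},\Gamma}$ using the transitivity of retractions, note that the retraction of $\Gamma_{\widehat{FR},n}$ to $\Gamma^{(n)}_0 \subseteq \Gamma$ composed with $r_{\Gamma^{(n)},\Gamma}$ coincides with $r_{\pberk,\Gamma}$ on the support of $\nu_{\phi^n}$ (the paper phrases this as ``the points of $\Gamma_{\widehat{FR},n}\setminus\Gamma^{(n)}_0$ map to $\Gamma^{(n)}_0$ under $r_{\pberk,\Gamma}$''), identify the pushforward of the point mass as $\delta_{r_\Gamma(\infty)}$, and observe $(r_{\Gamma^{(n)},\Gamma})_*\Lambda_n = (r_{\pberk,\Gamma})_*\Lambda_n$ since $\Lambda_n$ is supported on $\Gamma^{(n)}$. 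This is correct and is the same proof.
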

\begin{proof}
Note that $\Delta_\Gamma(f_n) = (r_{\Gamma^{(n)}, \Gamma})_* \Delta_{\Gamma^{(n)}}(f_n)$. Note that any path connecting a point $x\in \Gamma_{\widehat{\textrm{FR}}, n}$ to a point $y\in \Gamma$ must intersect $\Gamma^{(n)}_0$; consequently, $$(r_{\Gamma^{(n)}, \Gamma})_*(r_{\Gamma_{\widehat{\textrm{FR}}, n}, \Gamma^{(n)}_0})_* \nu_{\phi^n} = (r_{\Gamma_{\widehat{\textrm{FR}}, n}, \Gamma})_* \nu_{\phi^n}\ .$$ Since the support of $\nu_{\phi^n}$ is contained in $\Gamma_{\widehat{\textrm{FR}}, n}$, this gives 
\begin{equation}\label{eq:doubleretractionnu}(r_{\Gamma^{(n)}, \Gamma})_*(r_{\Gamma_{\widehat{\textrm{FR}}, n}, \Gamma^{(n)}_0})_* \nu_{\phi^n}  = (r_{\pberk, \Gamma})_* \nu_{\phi^n}\ .\end{equation} In a similar manner, we find that \begin{equation}\label{eq:doubleretractionptmass} (r_{\Gamma^{(n)}, \Gamma})_* \delta_{r_{\Gamma^{(n)}_0}(\infty)} = \delta_{r_{\Gamma}(\infty)}\ . \end{equation} Finally, using the fact that the support of $\Lambda_n$ lies in $\Gamma^{(n)}$, we find
\begin{equation}\label{eq:doubleretractionLambda}
(r_{\Gamma^{(n)}, \Gamma})_* \Lambda_n = (r_{\pberk, \Gamma})_* \Lambda_n\ .
\end{equation} Combining (\ref{eq:doubleretractionnu}), (\ref{eq:doubleretractionptmass}), and (\ref{eq:doubleretractionLambda}), and using the decomposition of $\Delta_{\Gamma^{(n)}}(f_n)$ given in Lemma~\ref{lem:lapgamman} yields the asserted result.
\end{proof}

From these results we obtain the proposition that will facilitate the weak convergence. 

\begin{prop}\label{prop:lapgamma}
For $\Gamma$ a fixed finite subtree in $\hberk$ having type II endpoints, 
\begin{align*}
\Delta_{\Gamma}\left(\dfrac{1}{d^{2n}-d^n} \right.&\ord\Res_{\phi^n}(\cdot) - g_{\mu_{\phi}}(\cdot, \cdot)\bigg) \\ 
& = 2(r_{\pberk, \Gamma})_*(\mu_{\phi}-\nu_{\phi^n}) + \dfrac{2}{d^n-1} (r_{\pberk, \Gamma})_*\Lambda_n\ .
\end{align*}
\end{prop}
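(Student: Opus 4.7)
The plan is to reduce the desired formula to the previous Lemma by means of the identity (\ref{eq:desiredprop}) together with a standard fact about the Laplacian of the Berkovich canonical height $\hat{h}_{\phi,v}$. Specifically, (\ref{eq:desiredprop}) asserts
$$\frac{1}{d^{2n}-d^n}\ord\Res_{\phi^n}(x) - g_{\mu_{\phi}}(x,x) \;=\; f_n(x) - 2\hat{h}_{\phi,v}(x) + C_\phi,$$
where $C_\phi := \log_v|\Res(\Phi)|^{-1/(d(d-1))}$ is a constant independent of $x$. Laplacians kill constants, so applying $\Delta_{\Gamma}$ to both sides reduces the problem to computing $\Delta_{\Gamma}(f_n)$ and $\Delta_{\Gamma}(\hat{h}_{\phi,v})$ separately and adding.

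The first of these is already in hand: the immediately preceding Lemma gives
$$\Delta_{\Gamma}(f_n) \;=\; -2(r_{\pberk,\Gamma})_* \nu_{\phi^n} + 2\delta_{r_{\Gamma}(\infty)} + \dfrac{2}{d^n-1}(r_{\pberk,\Gamma})_* \Lambda_n.$$
For the second, I will appeal to the standard identity $\Delta_{\pberk}\hat{h}_{\phi,v} = \delta_{\infty} - \mu_{\phi}$ established in \cite{BR}, Chapter 10; this is essentially the defining property of $\mu_{\phi}$ together with the fact that $\hat{h}_{\phi,v}$ is a potential for $\mu_{\phi}$ relative to the point $\infty$. Since $\hat{h}_{\phi,v}\in\BDV(\pberk)$, the compatibility of the Laplacian with retractions recalled in Section~\ref{sect:convandnot} yields
$$\Delta_{\Gamma}(\hat{h}_{\phi,v}) \;=\; (r_{\pberk,\Gamma})_*\bigl(\delta_{\infty} - \mu_{\phi}\bigr) \;=\; \delta_{r_{\Gamma}(\infty)} - (r_{\pberk,\Gamma})_*\mu_{\phi}.$$

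Finally, I will subtract twice the second expression from the first. The stray $\delta_{r_{\Gamma}(\infty)}$ terms cancel and the $(r_{\pberk,\Gamma})_*\mu_{\phi}$ term combines with $(r_{\pberk,\Gamma})_*\nu_{\phi^n}$ to produce the signed difference $(r_{\pberk,\Gamma})_*(\mu_{\phi}-\nu_{\phi^n})$, yielding the stated formula. The only substantive point is correctly invoking the Laplacian identity for $\hat{h}_{\phi,v}$ from \cite{BR} with the sign convention used throughout this paper; all of the rest is bookkeeping with the two formulae above. Consistency can be double-checked by verifying that both sides have total mass zero on $\Gamma$, which they do since $\mu_{\phi}$ and $\nu_{\phi^n}$ are probability measures and $\Lambda_n$ is produced from graph Laplacians in the preceding subsection and hence also has total mass zero.
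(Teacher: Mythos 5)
Your proposal is correct and matches the paper's own proof essentially step for step: both rewrite the target function via the decomposition of $g_{\mu_\phi}$ as $f_n - 2\hat{h}_{\phi,v} + \text{const}$, discard the constant under $\Delta_\Gamma$, invoke the preceding Lemma for $\Delta_\Gamma(f_n)$, and use $\Delta_\Gamma(\hat{h}_{\phi,v}) = (r_{\pberk,\Gamma})_*(\delta_\infty - \mu_\phi)$ so that the $\delta_{r_\Gamma(\infty)}$ terms cancel. The added total-mass sanity check is a nice touch but not part of the paper's argument.
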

\begin{proof}
Using the decomposition of $g_{\mu_{\phi}}(x, x) = -\log(\delta(x, x)_{\infty}) + 2\hat{h}_{\phi}(x) + M$ (where $M$ is the constant $\log_v|\Res(\Phi)|^{-1/(d(d-1))}$), we can write $$\dfrac{1}{d^{2n}-d^n} \ord\Res_{\phi^n}(x) - g_{\mu_{\phi}}(x,x) = f_n - 2\hat{h}_{\phi}(x) -M\ .$$ Taking Laplacians on $\Gamma$, we obtain
\begin{align*}
\Delta_{\Gamma}\left(\dfrac{1}{d^{2n}-d^n} \ord\Res_{\phi^n}(\cdot) - g_{\mu_{\phi}}(\cdot, \cdot)\right) & = \Delta_\Gamma (f_n - 2\hat{h}_{\phi} -M)\\
& = \Delta_{\Gamma}(f_n) - 2\Delta_{\Gamma}(\hat{h}_{\phi})\\
& = \left(-2(r_{\pberk, \Gamma})_* \nu_{\phi^n} + 2 \delta_{r_{\Gamma}(\infty)} + \dfrac{2}{d^n-1} (r_{\pberk, \Gamma})_* \Lambda_n\right)\\
& \hspace{1cm}- 2(r_{\pberk, \Gamma})_*(\delta_{\infty} - \mu_{\phi})\\
& = 2(r_{\pberk, \Gamma})_*(\mu_{\phi}-\nu_{\phi^n}) + \dfrac{2}{d^n-1} (r_{\pberk, \Gamma})_*\Lambda_n\ .
\end{align*}

\end{proof}

\subsection{Proof of Convergence}\label{sect:lapweakconv}
We are now ready to prove Theorem \ref{thm:wkconvexplicit}, after which we readily obtain the proof of Theorem \ref{thm:wkconv}.

\begin{proof}[Proof of Theorem \ref{thm:wkconvexplicit}]
Let $\Gamma$ be a finite subtree in $\hberk$, and $f\in$CPA($\Gamma$). Let $R_\Gamma$ be chosen so that $\Gamma \subseteq \mathcal{B}_\rho(\zetaG, R_\Gamma)$. We are interested in estimating $|\int_{\pberk} f d(\mu_\phi - \nu_{\phi^n})|$. Since $f\in \CPA(\Gamma)$, it suffices to estimate $$\left| \int_\Gamma f d (r_{\pberk, \Gamma})_*(\mu_{\phi} - \nu_{\phi^n})\right|\ .$$ From Proposition~\ref{prop:lapgamma}, we can express the measure as $$(r_{\pberk, \Gamma})_* (\mu_{\phi} - \nu_{\phi^n}) =\Delta_{\Gamma} \left( \dfrac{1}{d^{2n}-d^n} \ord\Res_{\phi^n} - g_{\mu_{\phi}}\right) - \dfrac{2}{d^n-1} (r_{\pberk, \Gamma})_*\Lambda_n\ .$$  Thus we can decompose our integral and estimate:

\begin{align*}
\left| \int_\Gamma f d (r_{\pberk, \Gamma})_*(\mu_{\phi} - \nu_{\phi^n})\right| & = \left| \int_\Gamma f d\Delta_{\Gamma}\left(\dfrac{1}{d^{2n}-d^n} \ord\Res_{\phi^n} - g_{\mu_{\phi}}\right) - \dfrac{2}{d^{n}-1}\int_\Gamma f d(r_{\pberk, \Gamma})_* \Lambda_n\right|\\
& \leq \left| \int \left(\dfrac{1}{d^{2n}-d^n} \ord\Res_{\phi^n} -g_{\mu_{\phi}}\right) d\Delta_{\Gamma}(f)\right| + \dfrac{2}{d^n-1} \left| \int_{\Gamma} fd(r_{\pberk, \Gamma})_* \Lambda_n\right|\\
& \leq \max_{\Gamma} \left|\dfrac{1}{d^{2n}-d^n} \ord\Res_{\phi^n} - g_{\mu_{\phi}}\right| \cdot \left|\Delta\right|(f) + \dfrac{2}{d^n-1} \max_{\Gamma}|f| \cdot D_\Gamma \ .
\end{align*} Using the explicit estimate from Theorem \ref{thm:fnconvexplicit}, we find 
\begin{align*}
\left| \int_\Gamma f d (r_{\pberk, \Gamma})_*(\mu_{\phi} - \nu_{\phi^n})\right| & \leq \dfrac{4}{d^n-1}\left(\max \left( C_\phi, R_\Gamma\right)\cdot \left| \Delta\right|(f) + \max_\Gamma \left|f\right| \cdot D_\Gamma\right)\ .
\end{align*}

\end{proof}

With this we are able to prove Theorem \ref{thm:wkconv}, that the measures $\left\{\nu_{\phi^n}\right\}$ converge weakly to $\mu_\phi$. In order to show weak convergence, we will need that for all choices of $F\in \mathcal{C}(\pberk)$, we have $$\int_{\pberk} Fd(\mu_{\phi} - \nu_{\phi^n}) \to 0$$ as $n\to \infty$. Here is the proof.

\begin{proof}[Proof of Theorem \ref{thm:wkconv}]

Let $\epsilon > 0$. Choose $F\in \mathcal{C}(\pberk)$. By Proposition 5.4 in \cite{BR}, we know that there exists a finite subtree $\Gamma$ and a function $f\in$ CPA($\Gamma$) such that $$\sup_{\pberk} |F(x)-f\circ r_{\pberk, \Gamma}(x)| < \dfrac{\epsilon}{4}\ .$$ Since both $\mu_{\phi}$ and $\nu_{\phi^n}$ are both probability measures, we have that 

\begin{align*}
\left|\int_{\pberk}Fd(\mu_{\phi}-\nu_{\phi^n})\right| & = \left| \int_{\pberk} \left( F-f\circ r_{\pberk, \Gamma}\right)d(\mu_{\phi}-\nu_{\phi^n}) + \int_{\pberk} f\circ r_{\pberk, \Gamma} d(\mu_{\phi}-\nu_{\phi^n})\right|\\
& \leq \dfrac{\epsilon}{2} + \left|\int_{\pberk} f\circ r_{\pberk, \Gamma} d(\mu{\phi} - \nu_{\phi^n})\right|\\
& = \dfrac{\epsilon}{2} + \left|\int_{\Gamma} f d (r_{\pberk, \Gamma})_* (\mu_{\phi} - \nu_{\phi^n})\right|\ .
\end{align*}

Since $f$ and $\Gamma$ are fixed,  Theorem \ref{thm:wkconvexplicit} tells us that for $n$ sufficiently large, the remaining integral term can be made smaller than $\dfrac{\epsilon}{2}$. This establishes weak convergence.
\end{proof}

\subsection{Examples}\label{sect:exampleswkconv}
We now give several explicit examples showing the weak convergence of the crucial measures.

\begin{ex}\label{ex:wkconvHaar} Let $K=\mathbb{C}_p$ for some prime $p\geq 3$, and let $\phi(z) = \dfrac{z^p-z}{p}$. It is known (see \cite{BR}, Example 10.120) that the invariant measure attached to $\phi$ is the Haar measure on $\mathbb{Z}_p:= \lim_{\leftarrow} \mathcal{O}_K / (\mathfrak{m}_K^n)$. The classical fixed points of $\phi$ are $\infty$ and points $\zeta_1, ..., \zeta_p$, where the $\zeta_i$ lie in the different cosets of $\mathbb{Z}_p / p\mathbb{Z}_p$; we have that $\Gamma_{\textrm{Fix}}$ is the tree spanned by the $\zeta_i$ and $\infty$. The Gauss point $\zetaG$ is a non-fixed branch point of $\Gamma_{\textrm{Fix}}$, with valence $p+1$; hence $w_\phi(\zetaG) = p+1-2 = p-1 = \deg(\phi) -1$. By the weight formula, it is the only weighted point.

We now look to preimages of $\zetaG$ under $\phi$. The set $\phi^{-1}(\zetaG)$ is a collection of disjoint discs $D(a_1, r_1), ..., D(a_p, r_p)$ where $a_i$ lie in the various directions towards fixed points, and the $r_i$ can all be taken to be $1/p$. To see this, note that the preimages of zero are the points satisfying $a_i^p - a_i = 0$; in the reduction modulo $\mathfrak{m}_K$ these are the same as the classical fixed points $\zeta_i$ above. For the radii, one checks that $|\phi(a_i+p) - \phi(a_i)| = 1$, which establishes that $r_i = |p|$ for each $i$. It follows from these two facts that the discs are disjoint.

More generally, we claim that a point in the $n^{th}$ preimage of $\zetaG$ corresponds to a disc $$D(a_{i_n i_{n-1} \cdots i_2 i_1}, p^{-n})$$ where the points \{$a_{i_n i_{n-1} \cdots i_2 i_1}\}$ are the successive preimages of $0$ indexed in such a way that $\phi(a_{i_n i_{n-1} \cdots i_2 i_1}) = a_{i_{n-1} \cdots i_2 i_1}$, $\phi^{(2)}(a_{i_n i_{n-1} \cdots i_2 i_1}) = a_{i_{n-2} \cdots i_2 i_1}$, ... and finally $\phi^{(n)}(a_{i_n i_{n-1} \cdots i_2 i_1}) = 0$. Note that we make no assertion as to whether the points $a_{i_n \cdots i_1}$ are distinct, though in the end we will deduce that in fact they are:

\noindent \emph{Claim:} The points $a_{i_n \cdots i_1}$ lie in distinct coset classes modulo $p^n$. 

Note that we have already seen this for $n=1$, where the preimages of $0$ are $a_1 = \zeta_1, ..., a_p = \zeta_p$. We proceed now by induction. Suppose $a_{i_n\cdots i_1} \equiv a_{j_n \cdots j_1}\mod p^n$. By the relation $\phi(a_{i_n \cdots i_1}) = a_{i_{n-1} \cdots i_1}$, we find that $$a_{i_n\cdots i_1} = a_{i_n \cdots i_1}^p - p\cdot a_{i_{n-1} \cdots i_1}\ ,$$ and likewise $$a_{j_n \cdots j_1} = a_{j_n \cdots j_1}^p - p\cdot a_{j_{n-1} \cdots j_1}\ .$$ The congruence $a_{i_n \cdots i_1} \equiv a_{j_n \cdots j_1} \mod p^n$ implies that for some $r\in \mathbb{Z}$ we have $$p^n r = a_{i_n \cdots i_1}^p - a_{j_n \cdots j_1}^p + p (a_{j_{n-1} \cdots j_1} - a_{i_{n-1} \cdots i_1})\ .$$ By our induction hypothesis, $p^{n-1} \nmid a_{j_{n-1} \cdots j_1} - a_{i_{n-1} \cdots i_1}$, and therefore we conclude that $p^{n} \nmid a_{i_n \cdots i_1}^p - a_{j_n \cdots j_1}^p$. But this contradicts that $a_{i_n \cdots i_1} \equiv a_{j_n \cdots j_1} \mod p^n$, and thus establishes the claim.

In particular, the above claim implies that the preimages $\phi^{-n}(0)$ are in one-to-one correspondence with the coests of $\mathbb{Z}_p / p^n \mathbb{Z}_p$. The fact that the radii corresponding to the $n^{th}$ preimages of $\zetaG$ are $p^{-n}$ can be seen by the fact that $a_{i_n \cdots i_1} + p^n$ maps to a point lying at distance $p^{n-1}$ from $a_{i_{n-1} \cdots i_1}$. 

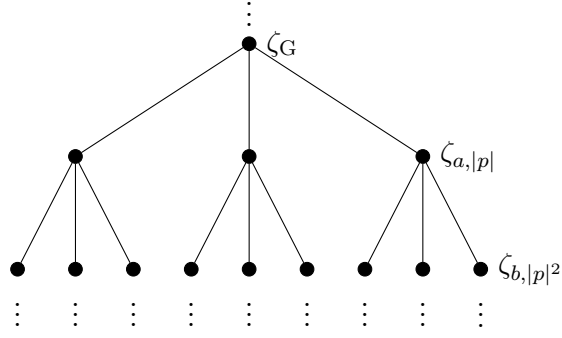
\begin{figure}\label{fig:Haar}

\begin{tikzpicture}[level 1/.style = {sibling distance = 6em}, level 2/.style = {sibling distance = 2em}]
\node[shape=circle,draw, fill=black!100, label = right:$\zetaG$, scale={0.5}, label=above:$\vdots$]{}
   child {node [shape=circle, draw, fill=black!100, scale={0.5}]{}
      child {node [shape=circle,draw, fill=black!100, scale={0.5}, label=below:$\vdots$]{}}
      child {node [shape=circle,draw, fill=black!100, scale={0.5}, label=below:$\vdots$]{}}
      child {node [shape=circle,draw, fill=black!100, scale={0.5}, label=below:$\vdots$]{}}
}
   child {node [shape=circle,draw, fill=black!100, scale={0.5}]{}
      child {node [shape=circle,draw, fill=black!100, scale={0.5}, label=below:$\vdots$]{}}
      child {node [shape=circle,draw, fill=black!100, scale={0.5}, label=below:$\vdots$]{}}
      child {node [shape=circle,draw, fill=black!100, scale={0.5}, label=below:$\vdots$]{}}
}
   child {node [shape=circle,draw, fill=black!100, scale={0.5}, label=right:$\zeta_{a,|p|}$]{}
      child {node [shape=circle,draw, fill=black!100, scale={0.5}, label=below:$\vdots$]{}}
      child {node [shape=circle,draw, fill=black!100, scale={0.5}, label=below:$\vdots$]{}}
      child {node [shape=circle,draw, fill=black!100, label = right:$\zeta_{b, |p|^2}$, scale={0.5}, label=below:$\vdots$]{}}
};

\end{tikzpicture}
\caption[An Example of Weak Convergence: $\frac{z^p-z}{p}$]{An example when $p=3$ and $n=3$. The bold vertices are points in the support of $\nu_{\phi^3}$. Each has valence 4 in the tree  $\Gamma_{\textrm{Fix},3}$ spanned by the classical fixed points, and each of these points is moved by $\phi$; hence these points each have weight $\frac{1}{13}$.}
\end{figure}

We next claim that each point $\zeta\in \bigcup_{i=0}^{n-1} \phi^{-i}(\zetaG)$ is a branch point of $\Gamma_{\textrm{FR},n}$ with valence at least $p+1$. To see this, note that $D(a_{i_k \cdots i_1}, p^{-k})\subseteq D(0,1)$, and that $\phi^k$ maps $D(a_{i_k \cdots i_1}, p^{-k})$ onto $D(0,1)$. Then since $\phi^{k+1}(D(a_{i_{k+1} i_k \cdots i_1}, p^{-(k+1)})) = D(0,1)$, it must contain a fixed point of $\phi^{k+1}$, which necessarily lies in $D(a_{i_k \cdots i_1}, p^{-k})$. There are $p$ such discs $D(a_{i_{k+1} \cdots i_1}, p^{-(k+1)})$ lying in $D(a_{i_k\cdots i_1}, p^{-k})$ (corresponding to the cosets of $\mathbb{Z}_p / p^{k+1} \mathbb{Z}_p$ in $D(a_{i_k\cdots i_1}, p^{-k})$). This, together with the direction towards $\infty$ implies that $\zeta_{a_{i_k \cdots i_1}, p^{-k}}$ has valence at least $p+1$ in $\Gamma_{\textrm{FR}, n}$ for each $k=1, ..., n-1$. 

Finally, note that each point $\zeta\in \bigcup_{i=0}^{n-1} \phi^{-i}(\zetaG)$ is moved, hence $w_{\phi^{(n)}}(\zeta)= v_{\Gamma_{\textrm{FR},n}}(\zeta)-2 \geq p-1$. There are $\sum_{i=0}^{n-1} p^i = \frac{p^n-1}{p-1}$ such points, and by summing the corresponding weights, we find a total weight of at least $p^n-1$. As this is equal to $\deg(\phi^n) -1$, these are the only points which bear weight and their weights must be eactly equal to $p-1$. 

It also follows from the above remarks that the points $\phi^{-n}(\zetaG)$ distribute themselves equally among the representatives of $\mathcal{O}_K / (\mathfrak{m}_K^n)$, and so as $n$ tends to infinity these points converge to the points of $\mathbb{Z}_p$. For each fixed $k$, we know $\mu_{\textrm{Haar}}(D(a, p^{-k})) = p^{-k}$ for any center $a\in \ZZ_p$. The corresponding $\nu_{\phi^n}$ measure can be computed by considering the convex hull of $D(a, p^{-k})$ in $\pberk$, which we will denote by $\mathcal{D}(a, p^{-k})$ (concretely, this is $\pberk \setminus B_{\zeta_{a, p^{-k}}}(\vv_\infty)^-$). The set $\mathcal{D}(a, p^{-k})$ will only receive $\nu_{\phi^n}$ weight when $n\geq k$, and in this case, each point in $\mathcal{D}(a, p^{-k})$ that receives $\nu_{\phi^n}$-mass will have weight $\frac{p-1}{p^n-1}$. We need only count how many such points there are in a given $\mathcal{D}(a, p^{-k})$. 

It will suffice to assume our center is of the form $a_{i_k i_{k-1} \cdots i_1}$, since the discs centered at these points of radius $p^{-k}$ form a partition of $\ZZ_p$. From the arguments above, $\mathcal{D}(a_{i_k i_{k-1} \cdots i_1}, p^{-k})$ contains $1+p+p^2 + ... + p^{(n-1)-k} = \frac{p^{n-k}-1}{p-1}$ points which receive $\nu_{\phi^n}$-mass, corresponding to the preimages $\phi^{-k}(\zetaG), \phi^{-(k+1)} (\zetaG), \dots, \phi^{-(n-1)}(\zetaG)$ which lie in $\mathcal{D}(a_{i_k i_{k-1} \cdots i_1})$ (there are $1, p, p^2, ... p^{(n-1)-k}$ such points, resp.). Therefore, $$\nu_{\phi^n}(\mathcal{D}(a_{i_k i_{k-1} \cdots i_1}, p^{-k})) = \frac{p-1}{p^n-1}\cdot  \frac{p^{n-k}-1}{p-1} \to \frac{1}{p^k} = \mu_{\textrm{Haar}}(\mathcal{D}(a_{i_k i_{k-1} \cdots i_1}, p^{-k})) \ ,$$ which completes the proof.

\end{ex}

\begin{ex}[Latt\`es Maps]Let $K=\CC_p$ be the $p$-adic complex numbers, and fix $q\in K$ satisfying $0< |q| < 1$. A Tate curve $E/K$ is an elliptic curve which is isomorphic (as a group) to the quotient $K^\times / q^\ZZ$ (see, e.g., \cite{AEC} Appendix C). In particular, the multiplication-by-$m$ map $[m]:E\to E$ is given by the quotient of the map $z\mapsto z^m$ on $K^\times$. Viewing $E$ as an affine plane curve $y^2=x^3 + Ax+B$ for $A,B\in K$, we let $\pi:E\to \mathbb{P}^1(K)$ be the map $(x,y)\mapsto x$. The \emph{Latt\`es map} corresponding to multiplication by $m$ is the rational map $\phi_m$ of degree $m^2$ on $\PP^1(K)$ which completes the diagram

\begin{center}
$\begin{CD}
E @>[m]>> E\\
@VV\pi V   @VV\pi V\\
\PP^1(K) @>\phi_m>> \PP^1(K)
\end{CD}$
\end{center}

The map $\phi^m$ can be extended to the Berkovich line $\pberk$ over $K$ in a natural way. Choosing suitable coordinates on $\PP^1(K)$, the Julia set of $\phi_m$ is the segment $\mathcal{J} = [\zetaG, \zeta_{0, |q|^{-1/2}}]$, and the equilibrium measure is the uniform measure on this segment (\cite{FRL}, Proposition 5.1). For $i=0, 1, ..., m-1$, let $\mathcal{I}_i = [\zeta_{0, |q|^{-i/2m}}, \zeta_{0, |q|^{-(i+1)/2m}}]$, so that $\mathcal{J} = \bigcup_{i=0}^{m-1} \mathcal{I}_i$. The restriction of $\phi_m$ to $\mathcal{I}_i$ is an affine map (with respect to the metric $\rho$), sending $\mathcal{I}_i$ bijectively onto $\mathcal{J}$. Along each segment $\mathcal{I}_i$, the map $\phi_m$ has slope $(-1)^i m$ (\cite{FRL}, Proposition 5.1).
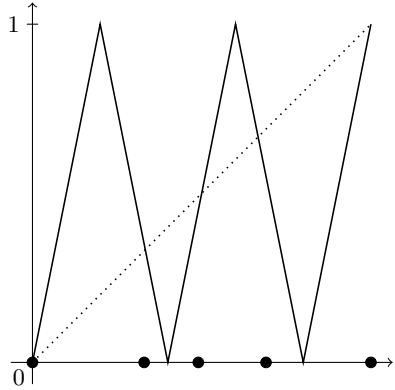
\begin{figure}
\begin{tikzpicture}
\datavisualization [school book axes, x axis = {length = 4.5cm}, y axis = {length = 4.5cm}, visualize as line = diag, visualize as line = sawtooth, visualize as scatter = fixedpts, diag = {style ={dotted}}, fixedpts = {style = {mark = *}}]
data [set=sawtooth] {
	x, y
	0, 0
	0.1, 1
	0.2, 0
	0.3, 1
	0.4, 0
	0.5, 1
}
data [set=diag] {
	x, y
	0,0
	0.5,1
}
data [set=fixedpts]{
	x,y
	0,0
	0.165, 0
	0.245,0
	0.345,0
	0.5,0
};
\end{tikzpicture}
\caption[An Example of Weak Convergence: Latt\`es Maps] {The action of $\phi_5$ on $\mathcal{J}$ is represented by the sawtooth graph to the left. The bold points along the $x$-axis are the type II fixed points. Those points corresponding to edges where the graph is decreasing have 2 shearing directions, while those corresponding to edges where the graph is increasing have no shearing.}
\end{figure}

Each interval $\mathcal{I}_i$ contains a unique type II fixed point of $\phi_m$ which we denote by $\zeta_i$, for $i=0, 1, ..., m-1$. Since $\phi_m$ has slope $(-1)^i m$, the rate of repulsion $r_{\phi_m}(\zeta_i, \vv) = m$ for any direction $\vv\in T_{\zeta_i}$ pointing into $\mathcal{J}$. Hence the points $\zeta_i$ are repelling fixed points with $\deg_{\phi_m}(\zeta_i) \geq m$. 

The type I fixed points of $\phi_m$ all lie in branches off of the points $\zeta_i$. For $i=1, 2, ..., m-2$, the points $\zeta_i$ each have two direction $\vv_0^{(i)}, \vv_\infty^{(i)}\in T_{\zeta_i}$ pointing into $\mathcal{J}$ which contain classical fixed points. If $i$ is odd (so that the slope of $\phi_m$ on $\mathcal{I}_i$ is $-m<0$), these directions are flipped by $(\phi_m)_*$; hence $\zeta_i$ has at least two shearing directions. If $i$ is even, $(\phi_m)_*$ fixes these directions at the respective $\zeta_i$. 

At the endpoints, $\zeta_0$ has a unique direction $\vv_\infty^{(0)}\in T_{\zeta_0}$ pointing into $\mathcal{J}$ which contains a type I fixed point, and it is fixed by $(\phi_m)_*$. The point $\zeta_{m-1}$ also has only one direction $\vv_{0}^{(m-1)}$ containing a type I fixed point; if $m$ is even, this direction is mapped to $\vv_\infty\in T_{\zeta_{m-1}}$ by $(\phi_m)_*$ (since the slope of $\phi_m$ on $\mathcal{I}_{m-1}$ is negative when $m$ is even). If $m$ is odd, then the direction $\vv_0^{(m-1)}$ is fixed by $(\phi_m)_*$. Therefore, for $m$ even we have 
\begin{equation}\label{eq:lowerbdweighteven}w_{\phi_m}(\zeta_i) = \deg_{\phi_m}(\zeta_i) -1 +N_{\textrm{Shearing}}(\zeta_i) \geq\left\{
\begin{matrix} 
m+1,& i=1,3,5,...m-3\\
m, & i=m-1\\
m-1, & i=0,2,4,...,m-2
\end{matrix}\right. \ .
\end{equation} Summing the lower bounds over each of the points $\zeta_i$ gives 
\begin{align*}
\sum_{i=0}^{m-1} w_{\phi_m}(\zeta_i) &\geq (m+1)\left(\frac{m-2}{2}\right) + m + (m-1)\left(\frac{m}{2}\right)\\
& = m^2-1\ .
\end{align*} Since the sum of the weights is always equal to $\deg(\phi_m)-1 = m^2-1$, the lower bounds given in (\ref{eq:lowerbdweighteven}) must be equalities.

Similarly, if $m$ is odd we have
\begin{equation}\label{eq:lowerbdweightodd}w_{\phi_m}(\zeta_i) = \deg_{\phi_m}(\zeta_i) -1 +N_{\textrm{Shearing}}(\zeta_i) \geq\left\{
\begin{matrix} 
m+1,& i=1,3,5,...m-2\\
m-1, & i=0,2,4,...,m-1
\end{matrix}\right. \ .
\end{equation} Again summing the lower bounds over each of the $\zeta_i$ gives 
\begin{align*}
\sum_{i=0}^{m-1} w_{\phi_m}(\zeta_i) &\geq (m+1)\left(\frac{m-1}{2}\right) + (m-1)\left(\frac{m+1}{2}\right)\\
&  = m^2-1\ .
\end{align*} Since the sum of the weights is $m^2-1$, we again conclude that the lower bounds in (\ref{eq:lowerbdweightodd}) are equalities. 

Since the iterates of $\phi_m$ satisfy $\phi_m^{(n)} = \phi_{m^n}$, we see that for fixed $m$, the points $\zeta_i$ distribute themselves uniformly (with respect to the weight functions) along the interval $[\zetaG, \zeta_{0, |q|^{-1/2}}]$ as $n\to\infty$.

\end{ex}

\section{Barycenters}\label{sect:barycenters}
\subsection{Notation and Summary of Results}
In this section we show that the sets $\MinResLoc(\phi^n)$ `approach' the barycenter of the canonical measure $\mu_\phi$. We first recall the definition of the barycenter of a measure in $\pberk$:

\begin{defn}\label{defn:barycenter} (Rivera-Letelier)
Let $\nu$ be a finite, positive Radon measure on $\pberk$. The barycenter of $\nu$, denoted $\Bary(\nu)$, is the collection of points $Q\in \pberk$ such that $\nu(\B_Q(\vec{v})^-) \leq \frac{1}{2} \nu(\pberk)$ for each $\vec{v}\in T_Q$.
\end{defn}

\begin{ex} 
\begin{enumerate}
\item Let $p$ be an odd prime and let $K=\mathbb{C}_p$, and let $\mu$ denote the Haar measure on $\mathbb{Z}_p$. Here the barycenter is $\{\zetaG\}$. To see this, note that each coset $k+p\mathbb{Z}_p$ has $\mu(k+p\mathbb{Z}_p) = \frac{1}{p}$. Letting $\vv_1, ..., \vv_p$ denote the directions at $\zetaG$ corresponding to these cosets, we have $\mu(B_{\zetaG}(\vv_i)^-) = \frac{1}{p}$. In particular, if $Q\neq \zetaG$ and $\vv_G\in T_Q$ is the direction towards $\zetaG$, then $\mu(B_Q(\vv_G)^-) \geq (p-1)\frac{1}{p} >\frac{1}{2}$, where the final inequality holds because $p$ is odd. Thus $Q\not\in \Bary(\mu)$.

\item Let $\nu= \frac{1}{2}\delta_A + \frac{1}{2} \delta_B$. Then the barycenter of $\nu$ is precisely the segment $[A,B]$. Let $\vv_B\in T_A$ be the direction pointing towards $B$. This example also shows that $\nu(\cup_{\vv\in T_A\setminus\{\vv_B\}} B_A(\vv)^-)$ need not equal $\frac{1}{2}$.

\item Let $K=\mathbb{C}_p$. The barycenter of the canonical measure attached to $\phi(T) = \frac{T^2-1}{p}$ is the interval $[\zeta_{D(1, \frac{1}{p})}, \zeta_{D(-1, \frac{1}{p})}]$; a proof will be given in Example~\ref{ex:Benedettos} below. This example is due to Rob Benedetto (personal communication).

\item Let $\nu = \delta_{\zeta}$ be a point mass at some point $\zeta\in \hberk$. Then the barycenter for $\nu$ is $\zeta$ itself.

\end{enumerate}
\end{ex}

The main result of this section is 
\begin{prop}\label{prop:minresepsilon}
Let $\phi\in K(z)$ be a rational map of degree $d\geq 2$. For any $\epsilon >0$, there exists an $N$ such that for every $n\geq N$ we have $$\MinResLoc(\phi^n) \subseteq B_\rho(\Bary(\mu_\phi), \epsilon)\ .$$
\end{prop}

\subsection{Preliminary Results on Arakelov-Green's Functions}
\smallskip
We will restrict our attention to probability measures $\nu$ on $\pberk$. In this section, we will establish several important facts about the barycenter of a probability measure on $\pberk$, all of which are due to Rivera-Letelier but which have yet to be published. As they are essential to the proof of Proposition~\ref{prop:minresepsilon}, we include our own proofs here. More explicitly, we will show (i) the barycenter of a probability measure is always non-empty (Proposition~\ref{prop:RL}), (ii) it is always a point or a segment (Proposition~\ref{prop:RL}), and (iii) the associated diagonal Arakelov-Green's function $g_\nu(x,x)$ attains its minimum precisely on $\Bary(\nu)$ (Proposition~\ref{prop:RL}). Having established these preliminary properties of the Arakelov-Green's functions, we use them to establish results relating the sets $\MinResLoc(\phi^n)$ to $\Bary(\mu_\phi)$. 

Throughout this section, we are primarily interested in the diagonal values of the Arakelov-Green's function, and we will write $g_\nu(x) := g_\nu(x,x)$. Recall that we are also assuming that probability measures are \emph{Radon} measures.

\begin{lemma}\label{lem:slopegreensfcn}
Let $\nu$ be a probability measure on $\pberk$. 

\begin{enumerate}
\item[(A)] The function $g_\nu(\cdot)$ is convex up along segments in $\pberk$.
\item[(B)] If $Q\in \hberk$, and $\vv$ is  any direction in $T_Q$, we have $$\del_v(g_\nu)(Q) = 1-2\nu(B_Q(\vv)^-)\ .$$
\item[(C)] If $Q$ is a type I point with $c=\nu(\{Q\})$, then for any $\epsilon >0$ there exists a type II point $Q_0$ sufficiently close to $Q$ such that if $\vv\in T_{Q_0}$ denotes the direction towards $Q$, we have $$|(1-2c) -  \del_{\vv}(g_{\nu})(Q_0) |<\epsilon\ .$$ In particular, if $\nu$ does not charge $Q$, then $$|1-\del_{\vv}(g_\nu)(Q_0)|< \epsilon \ .$$
\end{enumerate}
\end{lemma}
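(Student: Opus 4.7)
The plan is to establish the pointwise slope of the diagonal Hsia kernel along a single direction, then integrate against $\nu$ using bounded convergence.

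First I would establish the key geometric fact: for fixed $\zeta \in \pberk$, the function $j_\zeta(x) := -\log_v \delta(x,x)_\zeta$ is $1$-Lipschitz in the strong metric $\rho$, and at a type II point $Q \in \hberk$ its slope in direction $\vv \in T_Q$ is
$$\partial_\vv j_\zeta(Q) = \begin{cases} -1, & \zeta \in \B_Q(\vv)^-, \\ +1, & \zeta \notin \B_Q(\vv)^-. \end{cases}$$
Intuitively, $j_\zeta$ has a "valley" at $\zeta$ and decays like arclength away from it, so moving from $Q$ in direction $\vv$ approaches $\zeta$ (slope $-1$) precisely when $\zeta$ lies in the component $\B_Q(\vv)^-$. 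The special case $\zeta = Q$ falls automatically into the "$+1$" branch, since $Q \notin \B_Q(\vv)^-$. I would derive this identity either from the product formula expressing $\delta(x,x)_\zeta$ in terms of $\delta(\cdot,\cdot)_\infty$ and $\delta(\zeta,\zeta)_\infty$, or by direct computation once $\zeta$ is placed at the Gauss point via a $\PGL_2$-change of coordinates.

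With that in hand, part (1) reduces to interchanging the slope and the integral. Writing
$$\partial_\vv g_\nu(Q) = \lim_{t \to 0^+} \int_{\pberk} \frac{j_\zeta(Q+t\vv) - j_\zeta(Q)}{t}\, d\nu(\zeta),$$
the integrand is bounded in absolute value by $1$ for all $t > 0$ thanks to the $1$-Lipschitz estimate, and its pointwise limit as $t \to 0^+$ equals $-1$ on $\B_Q(\vv)^-$ and $+1$ on the complement. Bounded convergence then gives
$$\partial_\vv g_\nu(Q) = \nu\bigl(\pberk \setminus \B_Q(\vv)^-\bigr) - \nu\bigl(\B_Q(\vv)^-\bigr) = 1 - 2\nu\bigl(\B_Q(\vv)^-\bigr),$$
which is the statement of (1).

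For part (2), I would approach the type I point $Q$ by type II points $Q_0 = \zeta_{Q,r}$ along the unique direction into $\hberk$, letting $r \to 0$. With $\vv \in T_{Q_0}$ the direction pointing toward $Q$, the balls $\B_{Q_0}(\vv)^-$ form a decreasing nested family whose intersection is $\{Q\}$, since any $x \neq Q$ is separated from $Q$ by $Q_0$ once $r$ is small enough. Continuity of the finite measure $\nu$ from above yields $\nu(\B_{Q_0}(\vv)^-) \to \nu(\{Q\}) = c$, and substituting into the formula from part (1) gives $\partial_\vv g_\nu(Q_0) = 1 - 2\nu(\B_{Q_0}(\vv)^-) \to 1 - 2c$, so the required inequality holds for $Q_0$ sufficiently close to $Q$. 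The main obstacle is the pointwise slope identity for $j_\zeta$: everything else reduces to routine bounded convergence and monotone set theory. In particular one should check the identity uniformly in the type of $\zeta$, including $\zeta$ lying arbitrarily close to (but not equal to) $Q$ inside $\B_Q(\vv)^-$, where the Lipschitz bound is what prevents any pathology under the limit-integral exchange.
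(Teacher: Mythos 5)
Your proposal is correct, but it takes a genuinely different route from the paper's proof of part~(1). The paper fixes $Q$ as a base point and uses the representation
$g_\nu(x) = \rho(Q,x) - 2\int j_Q(x,\zeta)\,d\nu(\zeta) + C_Q$,
restricts the measure $\nu$ to the segment $[Q,Q+t_0\vv]$ via the retraction, decomposes the retracted measure as $c_1\delta_Q + \nu_{(Q,Q+t_0\vv)} + c_2(t_0)\delta_{Q+t_0\vv}$ with $c_1 = 1-\nu(B_Q(\vv)^-)$, and then squeezes the difference quotient between $2c_1-1$ and $1-2c_2(t_0)$, letting $t_0\to 0$. You instead work directly with the decomposition $g_\nu(x) = \int j_\zeta(x)\,d\nu(\zeta)+C$ from \eqref{eq:firstgreendecomp}, establish the pointwise slope $\partial_\vv j_\zeta(Q) = \mp 1$ according as $\zeta \in B_Q(\vv)^-$ or not, and invoke bounded convergence (the $1$-Lipschitz bound serving as the dominating function). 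Both are valid. The paper's retraction argument is more self-contained and gives an explicit two-sided bound for every $t_0>0$, which is mildly more informative; your route is cleaner conceptually and more modular, since once the slope identity for $j_\zeta$ is established the interchange of limit and integral is immediate. The one thing you should spell out in a final write-up is the slope identity itself (it does follow from the $\PGL_2$-equivariance of the Hsia kernel together with the fact that $\log_v\delta(x,x)_\infty$ is the arclength parameterization, as the paper uses elsewhere, e.g.\ in the proof of Lemma~\ref{lem1}), and it is worth noting explicitly, as you do, that the pointwise limit is $+1$ even when $\zeta=Q$, so no exceptional set arises. Your proof of part~(2) is essentially identical to the paper's: approach $Q$ by type II points $Q_0$ and use continuity from above of the finite measure $\nu$ on the nested family $B_{Q_0}(\vv)^-$ shrinking to $\{Q\}$, then apply part~(1).
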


\begin{proof}
Fix $Q\in \hberk$. Recall that the Arakelov-Green's function is given by 
\[
g_\nu(x) = \int -\log_v \delta(x,x)_\zeta d\nu(\zeta) + C
\] for an appropriate normalizing constant $C$. The Hsia kernel satisfies a change of variables formula (see \cite{BR} Equation (4.29))
\[
\delta(x,y)_\zeta = \frac{1}{\delta(\zeta, Q)_{\zetaG}^2}\cdot \frac{\delta(x,y)_Q}{\delta(x,\zeta)_Q \cdot \delta(y, \zeta)_Q}\ .
\] Inserting this into the expression for $g_\nu(x)$ we obtain
\begin{equation}\label{eq:partialgreendecomp}
g_\nu(x) = -\log_v \delta(x,x)_Q + 2\int \log_v\delta(x,\zeta)_Q d\nu(\zeta) + 2\int \log_v\delta(\zeta, Q)_{\zetaG}d\nu(\zeta) + C\ .
\end{equation}We next re-write some of the Hsia kernel appearing here in terms of the path distance metric; to do so, we recall that (see \cite{BR} Equation (4.31)) $-\log_v \delta(x,y)_\xi = \rho(x\wedge_\xi y, \xi) + \log_v \delta(\xi, \xi)_{\zetaG}$. Then (\ref{eq:partialgreendecomp}) becomes
\[
g_\nu(x) = \rho(x, Q) - 2\int \rho(x \wedge_Q \zeta, Q)d\nu(\zeta) - \log_v \delta(Q, Q)_{\zetaG} + 2\int \log_v \delta(\zeta, Q)_{\zetaG} d\nu(\zeta) + C\ .
\] Finally, by reversing the change-of-variables formula for the Hsia kernel, we see that the last three terms in the previous expression are simply $g_\nu(Q)$. Therefore, we obtain
\begin{equation}\label{eq:rewritteng}
g_\nu(x) = \rho(x,Q) - 2\int \rho(x\wedge_Q \zeta, Q) d\nu(\zeta) + g_\nu(Q)\ .
\end{equation}

To prove (A), fix a segment $[P,Q]\subseteq \pberk$, and fix also a point $R\in [P,Q]$. We will show that
\[
\frac{g_\nu(P) - g_\nu(Q)}{\rho(P,Q)} \geq \frac{g_\nu(R) - g_\nu(Q)}{\rho(R,Q)}\ .
\] Applying (\ref{eq:rewritteng}), this is equivalent to
\[
1 - \frac{2}{\rho(P,Q)} \int \rho(P\wedge_Q \zeta, Q) d\nu(\zeta) \geq 1 - \frac{2}{\rho(R,Q)} \int \rho(R\wedge_Q \zeta, Q)d\nu(\zeta)\ .
\] We will prove this by showing that for every $\zeta\in \pberk$, 
\begin{equation}\label{eq:needtoshow}
\frac{\rho(P\wedge_Q\zeta, Q)}{\rho(P,Q)} \leq \frac{\rho(R\wedge_Q \zeta, Q)}{\rho(R,Q)}\ .
\end{equation} As $R\in [P,Q]$, there are two cases. Suppose first that $R\wedge_Q \zeta = P\wedge_Q \zeta$. Then (\ref{eq:needtoshow}) holds by the fact that $\rho(R,Q) \leq \rho(P,Q)$. If $R\wedge_Q \zeta \neq P\wedge_Q \zeta$, then $R\wedge_Q \zeta = R$, so that the right side of (\ref{eq:needtoshow}) is 1. The inequality then holds by the fact that $\rho(P\wedge_Q \zeta ,Q) \leq \rho(P, Q)$. This completes the proof of (A).

We next prove (B). Fix $\vv\in T_Q$. We can evaluate the integral in (\ref{eq:rewritteng}) for a point\footnote{Recall that $Q+t\vv$ is actually an arbitrary point in $B_{\vv}(Q)^-$ satisfying $\rho(Q+t\vv, Q) = t$, and the limit of the difference quotient is unaffected by which point we choose. See the discussion in Section~\ref{sect:convandnot}.} $Q+t\vv$ by retracting to the segment $[Q, Q+t\vv]$. Let $\nu_{[Q, Q+t\vv]} = (r_{\pberk, [Q, Q+t\vv]})_*\nu$ denote the retraction of the measure\footnote{More generally, if $\mathcal{I}\subseteq \hberk$ is an interval, we will let $\nu_\mathcal{I} = (r_{\pberk, \mathcal{I}})_* \nu$ denote the retraction of $\nu$ to $\mathcal{I}$.}; then (\ref{eq:rewritteng}) becomes \begin{align}\label{eq:startdirectionalderivative}g_\nu(Q+t\vv) = \rho(Q, Q+t\vv) - 2\int_Q^{Q+t\vv} \rho(Q, s) d\nu_{[Q, Q+t\vv]}(s) + g_\nu(Q)\ .\end{align}

We will explicitly estimate the quantity $$\dfrac{g_\nu(Q+t_0\vv) - g_\nu(Q) }{t_0} $$ for small values of $t_0$. Let $c_1 = 1-\nu(B_Q(\vv)^-)$; then  the retraction measure $\nu_{[Q, Q+t_0\vv]} $ decomposes as $\nu_{[Q, Q+t_0\vv]} = c_1 \delta_Q + \nu_{(Q, Q+t_0\vv)} + c_2(t_0) \delta_{Q+t_0\vv}$, where $c_2(t_0) \to \nu(B_Q(\vv)^-)$ as $t_0 \to 0$; the existence\footnote{Explicitly, $c_2(t_0)$ can be given as $c_2(t_0):=1-\nu(B_{Q+t_0\vv}(\vv_Q)^-)$, the complementary mass of the ball at $Q+t_0\vv$ pointing towards $Q$.} of $c_2(t_0)$ follows from the regularity of $\nu$, which we assume to be Radon. Inserting this decomposition into Equation (\ref{eq:startdirectionalderivative}), we find
\begin{align} 
g_\nu(Q+t_0\vv) - g_\nu(Q) & = \rho(Q, Q+t_0\vv) - 2\int_Q^{Q+t_0\vv} \rho(Q,t) d\nu_{[Q, Q+t_0\vv]}(t)\nonumber\\
& = t_0 - 2\int_Q^{Q+t_0\vv} \rho(Q,t) d\nu_{(Q, Q+t_0\vv)} -2c_2(t_0)\cdot t_0\label{eq:greensecantslope}\ .
\end{align} Observe that $\nu_{(Q, Q+t_0\vv)}(Q, Q+t_0 \vv)\leq 1-c_1 - c_2(t_0)$, hence we can estimate the  integral in (\ref{eq:greensecantslope}) as $$0 \leq \int_Q^{Q+t_0\vv} \rho(Q, t) d\nu_{(Q, Q+t_0\vv)} \leq (1-c_1-c_2(t_0))\cdot t_0$$ And so for $t_0$ sufficiently small, $$(2c_1-1) \leq \frac{g_\nu(Q+t_0\vv) - g_\nu(Q)}{t_0} \leq 1-2c_2(t_0)\ .$$ Rewriting the left side with the explicit value of $c_1$ we have $$1-2\nu(B_Q(\vv)^-) \leq \frac{g_\nu(Q+t_0 \vv) - g_\nu(Q)}{t_0} \leq 1-2c_2(t_0)\ .$$ Letting $t_0 \to 0$, we have $$\del_v(g_\nu)(Q) = \lim_{t_0\to 0} \frac{g_\nu(Q+t_0 \vv) - g_\nu(Q)}{t_0}  = 1-2\nu(B_Q(\vv)^-)$$ as asserted. \\

We now show part (C). Let $Q$ be a type I point, set $c=\nu(\{Q\})$, and fix $\epsilon >0$. By the regularity of $\nu$, we can find a type II point $Q_0$ so that, if $\vv\in T_{Q_0}$ is the direction towards $Q$, then $c-\frac{\epsilon}{2} \leq \nu(B_{Q_0}(\vv)^-) < c+ \frac{\epsilon}{2}$. Thus $$|1-2c - \del_{\vv}(g_{\nu})(Q_0)| = 2|\nu(B_{Q_0}(\vv)^-) - c| < \epsilon\ .$$ If $Q\not\in \supp(\nu)$ this reduces to $$|1-\del_{\vv}(g_\nu)(Q_0)| < \epsilon$$ as asserted.

\end{proof}

With the above lemma, we can prove the following result about the geometry of the barycenter of a probability measure. This result is due originally to Rivera-Letelier, but the proof given here is our own:

\begin{prop}\label{prop:RL} (Rivera-Letelier)
Let $\nu$ be a probability measure on $\pberk$ with continuous potentials.
\begin{enumerate}
\item The function $g_\nu(x)$ is minimized precisely on $\Bary(\nu)$.
\item $\Bary(\nu)$ is a nonempty subset of $\hberk$, and it either consists of a single point or is a closed segment. 
\end{enumerate}
\end{prop}
\begin{proof}

Since $\nu$ has continuous potentials, the function $g_\nu(x)$ is lower semi-continuous on $\pberk$. Also, $\nu$ does not charge any type I points: if $\nu(\{Q\})>0$ for some type I point $Q$, then we can decompose the potential function as $$u_\nu(z, \zetaG) = -\nu(\{Q\})\log_v\delta(z, Q)_{\zetaG} + \int_{\pberk\setminus\{Q\}} -\log_v\delta(z, w)_{\zetaG} d\nu(w)\ .$$ But then $\lim_{z\to Q} u_\nu(z, \zetaG) = -\infty$, contradicting that $u_\nu(z, \zetaG)$ is continuous as a function to $\mathbb{R}$.

Since $\pberk$ is compact in the weak topology and $g_\nu(x)$ is lower semicontinuous, it must assume a minimum. Moreover, the points at which $g_\nu(x)$ attains its minimum lie in $\hberk$: if it contained a type I point $Q$, then necessarily $\nu(\{Q\}) = 0$ and by Lemma~\ref{lem:slopegreensfcn} there exists a type II point $Q_0$ sufficiently near $Q$ such that if $\vv\in T_{Q_0}$ is the direction towards $Q$, then $\del_{\vv}(g_\nu)(Q_0) > \frac{1}{2}$. In particular, $g_\nu(Q_0) < g_\nu(Q)$, contradicting that $Q$ is a minimum value of $g_\nu(x)$.

If $Q$ is a point at which $g_\nu(x)$ is minimized, then $\del_{\vv}(g_\nu)(Q) \geq 0$ for every $\vv\in T_Q$. In particular, it follows from Lemma~\ref{lem:slopegreensfcn} that $\nu(B_Q(\vv)^-) \leq \frac{1}{2}$ for every $\vv\in T_Q$; thus $Q$ is in the barycenter of $\nu$. It follows that $\Bary(\nu)$ is nonempty. Conversely, suppose that $Q\in \Bary(\nu)$; then $\nu(B_Q(\vv)^-)\leq \frac{1}{2}$ for every $\vv\in T_Q$, and by Lemma~\ref{lem:slopegreensfcn}, we find that $\del_{\vv}(g_\nu)(Q) \geq 0$ for every $\vv\in T_Q$. Now choose any point $P\in \hberk$ at which $g_\nu(x)$ is minimized, and consider the segment $[P, Q]$. Since $\del_{\vv_P}(g_\nu)(Q) \geq 0$, it follows that $g_\nu(Q) \leq g_\nu(P)$; since $g_\nu$ is minimized at $P$, we find that $g_\nu(Q) = g_\nu(P)$, i.e. $g_\nu$ is minimized at $Q$ as well. This finishes the proof that $g_\nu$ is minimized precisely on $\Bary(\nu)$. \\

We now show that $\Bary(\nu)$ is connected. Suppose there are two points $P, Q$ in the barcyenter of $\nu$, and let $R\in [P,Q]$. Then $R$ is also in the barycenter of $\nu$, since for any $\vv\in T_R$ we have either $B_R(\vv)^- \subseteq \B_P(\vv_R)^-$ or $B_R(\vv)^- \subseteq B_Q(\vv_R)^-$, where $\vv_R$ is the direction towards $R$ originating at $P$ or $Q$, as is appropriate. Thus $\nu(B_R(\vv)^-) \leq \frac{1}{2}$ for each $\vv\in T_R$. In particular, the barycenter of $\nu$ is connected.

Next we show that $\Bary(\nu)$ is a segment. Let $P\in \Bary(\nu)$, and suppose that $Q$ is any other point in the barycenter. Let $\vv_Q\in T_P$ be the direction towards $Q$; then $\nu(B_P(\vv_Q)^-) \leq \frac{1}{2}$, hence $$\nu\left(\cup_{\vv\in T_P\setminus\{\vv_Q\}} B_P(\vv)^-\right) + \nu(\{P\}) \geq \frac{1}{2}\ .$$ In a similar way let $\vw_P\in T_Q$ be the direction at $Q$ towards $P$. We have  that $\nu(B_Q(\vw_{P})^-) \leq \frac{1}{2}$ and moreover \begin{align}\label{eq:baryendpts}\frac{1}{2} \leq \nu\left(\cup_{\vv\in T_P \setminus \{\vv_{Q}\}} B_P(\vv)^-\right) +\nu(\{P\}) \leq \nu(B_Q(\vw_P)^-) \leq \frac{1}{2}\ .\end{align} Hence $\frac{1}{2} = \nu(\left(\cup_{\vv\in T_P \setminus \{\vv_{Q}\}} B_P(\vv)^-\right)+\nu(\{P\})$. Thus if $\Gamma$ is any connected subgraph of $\Bary(\nu)$, it can have at most two endpoints, i.e. $\Gamma$ must be a segment. Since finite graphs exhaust $\Bary(\nu)$, it follows that if $\Bary(\nu)$ has more than one point, then it must be a segment. This segment must be closed, as it is the collection of points where $g_\nu$ is minimized. 

\end{proof}

\subsection{Asymptotics of $\MinResLoc(\phi^n)$}

We now apply the results of the previous section to the functions $g_{\mu_\phi}(x,x)$ and the sets $\MinResLoc(\phi^n)$. Note that the canonical measure $\mu_\phi$ has continuous potentials (\cite{BR} Proposition 10.7), hence the results of Proposition~\ref{prop:RL} apply.  Several of the proofs in this section will rely on the following definition:

\begin{defn}
Let $\nu$ be a probability measure on $\pberk$ that does not charge type I points. If $Q\in \Bary(\nu)$, the set of directions at $Q$ which point away from $\Bary(\nu)$ is denoted $$T_Q^*(\nu) := \{\vv\in T_Q\ : \ \textrm{ For }t\textrm{ sufficiently small, }Q+ t\vv \not\in \Bary(\nu)\}\ .$$ If $Q\not\in \Bary(\nu)$, then the directions which do not contain $\Bary(\nu)$ are similarly denoted $$T_Q^*(\nu) := \{\vv\in T_Q \ : \ \Bary(\nu)\not\subseteq B_Q(\vv)^-\}\ .$$
\end{defn}

We now give a technical lemma that is essential in proving Propostion~\ref{prop:minresepsilon}:
\begin{lemma}\label{lem:bdonmeasure}
Let $\nu$ be a probability measure with continuous potentials. For every $\epsilon >0$, there exists $\delta=\delta(\nu, \epsilon) < \frac{1}{2}$ so that for every $x\in \hberk$ with $\rho(x, \Bary(\nu))=\epsilon$ and every $\vv\in T_x$ pointing away from $\Bary(\nu)$, we have $$\nu(B_x(\vv)^-) < \delta\ .$$
\end{lemma}

\begin{proof}
In the case that $\Bary(\nu)$ is a segment $[A,B]$, for any $Q\in (A,B)$ and any direction $\vv\in T_Q^*(\nu)$ pointing away from $\Bary(\nu)$, it follows from (\ref{eq:baryendpts}) that $\nu(B_Q(\vv)^-) = 0$. Thus, it suffices to prove the assertion for the end point(s) of $\Bary(\nu)$.

Let $A$ be an endpoint of $\Bary(\nu)$, and let $x\in\hberk$ be any point with $\rho(x, A) = \epsilon$ for which $A$ is the point in $\Bary(\nu)$ that is nearest to $x$. We consider two cases:

\begin{enumerate}
\item[(i)] First, suppose that there are no directions $\vv\in T_A^*(\nu)$ with $\nu(B_A(\vv)^-) = \frac{1}{2}$. Necessarily we have 
\begin{equation}\label{eq:finitesumequilibriummass}
\sum_{\vv\in T_A^*(\nu)}\nu(B_A(\vv)^-) \leq 1\ ,
\end{equation} and our hypothesis that $\nu(B_A(\vv)^-) < \frac{1}{2}$ for all $\vv\in T_A^*(\nu)$ ensures that $$s_A=\sup_{\vv\in T_A^*(\nu)} \nu(B_A(\vv)^-) < \frac{1}{2}\ .$$ Indeed, if $s_A = \frac{1}{2}$, then there would be an infinite sequence of directions $\vv_1, \vv_2, ...\in T_A$ with $\nu(B_A(\vv_i)^-) > \frac{1}{2} - \epsilon >0$; but this would contradict (\ref{eq:finitesumequilibriummass}). Then for any $\vv\in T_A^*(\nu)$ and any $x\in B_A(\vv)^-$ with $\rho(x, A) = \epsilon$, we have $$\nu(B_x(\vw)^-) \leq \nu(B_A(\vv)^-) \leq s_A <\frac{1}{2}$$ for each $\vw\in T_x\setminus\{\vv_A\}$, where $\vv_A\in T_x$ is the direction at $x$ pointing towards $A$.

\item[(ii)] Now suppose that, for some $\vv\in T_A^*(\nu)$, we have $\nu(B_A(\vv)^-) = \frac{1}{2}$. Let $x_\epsilon$ denote a generic point in $B_A(\vv)^-$ with $\rho(x_\epsilon, A)= \epsilon$ We have that \begin{align}\label{eq:sumepsilons}\sum_{x_\epsilon}\sum_{\vv\in T_{x_\epsilon}^*(\nu)} \nu(B_{x_\epsilon}(\vv)^-) \leq \frac{1}{2}\ .\end{align} In particular, at most countably many $x_\epsilon$ have directions $\vv\in T_{x_\epsilon}^*(\nu)$ that carry mass. Let $$s_A = \sup_{x_\epsilon, \vv\in T_{x_\epsilon}^*(\nu)} \nu(B_{x_\epsilon}(\vv)^-) \leq \frac{1}{2}\ .$$ 
Note that it is impossible to have some $x_\epsilon$ and a direction $\vv\in T_{x_\epsilon}^*(\nu)$ with $\nu(B_{x_\epsilon}(\vv)^-) = \frac{1}{2}$: if this were the case, then the open annulus with boundary points $A$ and $x_\epsilon$ would carry no mass. Moreover, for every $y\in (A,x_\epsilon)$ and any $\vv\in T_y$, we would have that 
\begin{equation*}
\nu(B_y(\vv)^-) \leq \max(\nu(B_y(\vv_A)^-), \nu(B_y(\vv_{x_\epsilon})^-)= \frac{1}{2}\\
\end{equation*} so that $y\in \Bary(\nu)$ (here, $\vv_{x_\epsilon}\in T_y$ is the direction towards $x_\epsilon)$. This would contradict that $A$ is an endpoint of $\Bary(\nu)$, and so we conclude that $\nu(B_{x_\epsilon}(\vv)^-) < \frac{1}{2}$ for all $\vv\in T_{x_\epsilon}^*(\nu)$. This, together with (\ref{eq:sumepsilons}), implies that $s_A < \frac{1}{2}$, and so we have $\nu(B_{x_\epsilon}(\vv)^-) < s_A$ for every $\vv\in T_{x_\epsilon}^*(\nu)$.
\end{enumerate}

If $\Bary(\nu)$ is a single point $A$, then $s_A$ is the constant asserted in the lemma. Otherwise, if $\Bary(\nu) = [A,B]$, it suffices to take $\delta = \min(s_A, s_B)$.
\end{proof}

\begin{proof}[Proof of Proposition~\ref{prop:minresepsilon}]
Fix $0<\epsilon < 1$ and let $\delta=\delta(\mu_\phi, \epsilon /2)$ be the constant arising from Lemma~\ref{lem:bdonmeasure} with $\sigma = \mu_\phi$. Note that here we are using the constant attached to $\frac{\epsilon}{2}$ rather than the one attached to $\epsilon$. 

Observe that we can interpret the conclusion of Lemma~\ref{lem:bdonmeasure} as a statement about the slope of $g_\phi(x,x)$; namely, if $x\in \hberk$ with $\rho(x, \Bary(\mu_\phi)) = \frac{\epsilon}{2}$ and $\vv\in T_x^*(\mu_\phi)$, then $$\del_{\vv}(g_\phi)(x) = 1-2\mu_\phi(B_x(\vv)^-) > 1-2\delta>0\ .$$

Fix $R$ large enough so that $B_\rho(\Bary(\mu_\phi), \epsilon)\subseteq B_\rho(\zetaG, R)$. Choose any $y$ with $\rho(y, \Bary(\mu_\phi)) = \epsilon$, and let $x$ be the unique point on the path joining $y$ to $\Bary(\mu_\phi)$ satisfying $\rho(x, \Bary(\mu_\phi))= \frac{\epsilon}{2}$. Set $$s=(1-2\delta)\cdot\frac{\epsilon}{4}>0\ .$$ Since $g_\phi (x,x)$ is convex along segments (see Corollary~\ref{cor:gxxprops}), we have 
\begin{align*}
g_\phi(y,y) - g_\phi(x,x) & \geq \del_{\vv}(g_\phi)(x)\cdot \frac{\epsilon}{2} \\
& > (1-2\delta)\frac{\epsilon}{2} = 2s\ .
\end{align*} Equivalently, $$g_\phi(x,x) + s < g_\phi(y,y) -s\ .$$ By Theorem~\ref{thm:fnconv} above, we may choose $N$ so that for $n\geq N$, we have $$\left|\frac{1}{d^{2n}-d^n}\ord\Res_{\phi^n}(z)-g_{\phi}(z,z)\right| < s$$ for every $z\in B_{\rho}(\zetaG, R)$. In particular,
\begin{align*}
\frac{1}{d^{2n}-d^n}\ord\Res_{\phi^n}(x) &\leq g_\phi(x,x) + s\\
& < g_\phi(y,y)-s\\
& \leq \frac{1}{d^{2n}-d^n}\ord\Res_{\phi^n}(y)\ .
\end{align*}

Thus for $n\geq N$, the function $\frac{1}{d^{2n}-d^n}\ord\Res_{\phi^n}(x)$ is increasing as one moves from points at distance $\frac{\epsilon}{2}$ from $\Bary(\mu_\phi)$ to points at distance $\epsilon$ from $\Bary(\mu_\phi)$. Since $\ordRes_\phi(\cdot)$ is convex up along segments, it must attain its minimum on $B_{\rho}(\Bary(\mu_\phi), \frac{\epsilon}{2})\subseteq B_\rho(\Bary(\mu_\phi), \epsilon)$.
\end{proof}

As a consequence, we have a result that gives an interpretation of the minimal value that $g_\phi(x,x)$ takes on $\pberk$:

\begin{cor} Let $m_n =\min_{x\in\pberk}\frac{1}{d^{2n}-d^n}\ord\Res_{\phi^n}(x)$ be the value that $\frac{1}{d^{2n}-d^n} \ord\Res_{\phi^n}(x)$ takes on $\MinResLoc(\phi^n)$.  Then  $$\min_{x\in\pberk}g_\phi(x,x) = \lim_{n\to\infty}m_n\ .$$
\end{cor}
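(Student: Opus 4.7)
The plan is to sandwich $m_n$ between two sequences both converging to $M := \min_{x\in \pberk} g_\phi(x,x)$. By Proposition~\ref{prop:RL}, this minimum is actually attained, and the minimizing set is precisely $\Bary(\mu_\phi)$, which by Lemma~\ref{lem:barynonempty} is a nonempty bounded subset of $\hberk$.

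For the upper bound, I would fix any $x^* \in \Bary(\mu_\phi)$, so that $g_\phi(x^*, x^*) = M$. Since $x^* \in \hberk$ is fixed, Theorem~\ref{thm:fnconvexplicit} gives
\[
\left| \frac{1}{d^{2n}-d^n} \ord\Res_{\phi^n}(x^*) - M \right| \;\leq\; \frac{2}{d^n-1}\max\!\bigl(C,\rho(x^*,\zetaG)\bigr) \;\longrightarrow\; 0.
\]
Since $m_n$ is the global minimum, $m_n \leq \frac{1}{d^{2n}-d^n} \ord\Res_{\phi^n}(x^*)$, so $\limsup_n m_n \leq M$.

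For the lower bound, I would choose $y_n \in \MinResLoc(\phi^n)$ realizing the minimum, so $m_n = \frac{1}{d^{2n}-d^n}\ord\Res_{\phi^n}(y_n)$. The key point is that the $y_n$ stay in a bounded region of $\hberk$: by Proposition~\ref{prop:minresepsilon} (applied with, say, $\epsilon = 1$), there is an $N$ such that $y_n \in \B_1(\Bary(\mu_\phi))$ for all $n \geq N$. Since $\Bary(\mu_\phi)$ is bounded in the strong metric, there is some $R > 0$ with $\B_1(\Bary(\mu_\phi)) \subseteq \B_\rho(\zetaG, R)$, and hence $\rho(y_n, \zetaG) \leq R$ for all $n \geq N$. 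Applying Theorem~\ref{thm:fnconvexplicit} at $y_n$ now gives
\[
\bigl| m_n - g_\phi(y_n, y_n) \bigr| \;\leq\; \frac{2}{d^n-1}\max(C, R),
\]
and combining this with $g_\phi(y_n, y_n) \geq M$ yields $m_n \geq M - \frac{2}{d^n-1}\max(C,R)$, hence $\liminf_n m_n \geq M$.

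Putting the two estimates together gives $\lim_n m_n = M$, as claimed. I do not anticipate any serious obstacle: the heart of the argument is already contained in Theorem~\ref{thm:fnconvexplicit} (which supplies uniform convergence on bounded $\rho$-balls) and Proposition~\ref{prop:minresepsilon} (which confines the minimizers to such a ball). The only point requiring a moment's care is ensuring that the $y_n$ remain uniformly bounded, which is exactly what Proposition~\ref{prop:minresepsilon} is designed to provide.
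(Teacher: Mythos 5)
Your proof is correct and relies on the same two essential ingredients as the paper's proof: the explicit convergence estimate in Theorem~\ref{thm:fnconvexplicit} (which gives uniform control on bounded $\rho$-balls around $\zetaG$) and Proposition~\ref{prop:minresepsilon} (which confines the minimizers $y_n$ to such a ball for $n$ large). The one genuine stylistic difference is how you bridge from $g_\phi(y_n,y_n)$ to the target minimum $M$: the paper first uses Proposition~\ref{prop:minresepsilon} to place $x_n$ within $\epsilon$ of $\Bary(\mu_\phi)$ and then invokes the Lipschitz continuity of $g_\phi(x,x)$ (Corollary~\ref{cor:gxxprops}) to conclude $|g_\phi(x_n,x_n) - M| < \epsilon$, whereas your sandwich argument sidesteps the Lipschitz lemma entirely. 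For the lower bound you only need the trivial inequality $g_\phi(y_n,y_n) \geq M$, and for the upper bound you evaluate at a single fixed $x^* \in \Bary(\mu_\phi)$, which does not require Proposition~\ref{prop:minresepsilon} at all. Your version is slightly leaner in its dependency set, though both proofs are otherwise parallel: you still need Proposition~\ref{prop:minresepsilon} (or some substitute) for the lower bound, since without a uniform bound on $\rho(y_n, \zetaG)$ the error term in Theorem~\ref{thm:fnconvexplicit} could grow with $n$ and the argument would break.
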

\begin{proof}
For $n\geq 1$, let $x_n\in \MinResLoc(\phi^n)$, and set $m_n = \frac{1}{d^{2n}-d^n} \ord\Res_{\phi^n}(x_n)$. Let $m_0 = \min_{x\in \pberk}g_\phi(x,x)$.

Fix $\epsilon >0$. By Proposition~\ref{prop:minresepsilon}, we may choose  $N_1$ sufficiently large so that for $n\geq N_1$, $\rho(x_n, \Bary(\mu_\phi)) < \frac{\epsilon}{2}$; by the (Lipschitz) continuity of $g_\phi(x,x)$ with respect to $\rho$ (Corollary~\ref{cor:gxxprops}), this implies \begin{align}\label{eq:mingreens} |m_0 - g_\phi(x_n, x_n)| < \frac{\epsilon}{2}\ .\end{align} Further, since $\Bary(\mu_\phi)$ is bounded, by Theorem~\ref{thm:fnconvexplicit} we may choose $N_2$ so that for $n\geq N_2$, we have \begin{align}\label{eq:ordresbound}\left|\frac{1}{d^{2n}-d^n} \ord\Res_{\phi^n}(x_n) - g_\phi(x_n,x_n) \right|  < \frac{\epsilon}{2}\ .\end{align} Taking $n\geq \max(N_1, N_2)$, Equations (\ref{eq:mingreens}) and (\ref{eq:ordresbound}) give \begin{align*}
\left| m_0 - \frac{1}{d^{2n}-d^n} \ord\Res_{\phi^n}(x_n)\right| & < \epsilon
\end{align*}

\end{proof}

We close this section by giving an asymptotic relation between $\Bary(\mu_\phi)$ and the trees $\Gamma_{\widehat{FR}, n}$:
\begin{prop}\label{prop:minrestree}
Let $\phi\in K(z)$ be a rational map of degree $d\geq 2$. Then there exists an $N=N(\phi)$ such that, for every $n\geq N$, we have $$\Bary(\mu_\phi)\subseteq \Gamma_{\widehat{FR},n}\ .$$
\end{prop}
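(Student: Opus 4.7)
The plan is to combine the characterization of $\Bary(\mu_\phi)$ from Section~\ref{sec:ArakelovGreen} with the weak convergence $\nu_{\phi^n}\to\mu_\phi$ established in Theorem~\ref{thm:wkconv}, exploiting the fact that $\supp(\nu_{\phi^n})\subseteq\Gamma_{FR,n}$. By Lemma~\ref{lem:barynonempty}, $\Bary(\mu_\phi)$ is a (possibly degenerate) segment $[P_1,P_2]\subseteq\hberk$; since $\Gamma_{FR,n}$ is a connected subtree of $\pberk$ and therefore convex in the tree order, it suffices to prove that both endpoints $P_1,P_2$ lie in $\Gamma_{FR,n}$ for $n$ sufficiently large.

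The key local step is to show that at each $P_i$ there are at least two distinct directions $\vv_1,\vv_2\in T_{P_i}$ with $\mu_\phi(B_{P_i}(\vv_j)^-)>0$. Since $\mu_\phi$ has continuous potentials (\cite{BR}, Proposition 10.7), $\mu_\phi(\{P_i\})=0$, so the partition identity
$$\sum_{\vv\in T_{P_i}}\mu_\phi\!\left(B_{P_i}(\vv)^-\right)=1$$
combined with the barycenter bound $\mu_\phi(B_{P_i}(\vv)^-)\le\tfrac12$ forces the mass to be distributed over at least two directions. Each $U_j:=B_{P_i}(\vv_j)^-$ is open in the weak topology of $\pberk$, so the portmanteau lower bound yields $\liminf_{n\to\infty}\nu_{\phi^n}(U_j)\ge\mu_\phi(U_j)>0$; hence for $n$ sufficiently large $\supp(\nu_{\phi^n})\cap U_j\ne\emptyset$. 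Because $\supp(\nu_{\phi^n})\subseteq\Gamma_{FR,n}$, the tree $\Gamma_{FR,n}$ meets both $U_1$ and $U_2$, and the geodesic joining a point of $\Gamma_{FR,n}\cap U_1$ to a point of $\Gamma_{FR,n}\cap U_2$ lies in $\Gamma_{FR,n}$ by convexity of the subtree. Since these two points sit in distinct components of $\pberk\setminus\{P_i\}$, the geodesic passes through $P_i$, so $P_i\in\Gamma_{FR,n}$ as required.

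To upgrade the inclusion to $\Gamma_{\widehat{FR},n}$, recall that the truncation is formed by excising segments $(Q_\alpha,\alpha]$ near each classical fixed point $\alpha$ of $\phi^n$, with the point $Q_\alpha$ chosen ``sufficiently close'' to $\alpha$. Since $\Bary(\mu_\phi)\subseteq\hberk$ is a compact set disjoint from the set of type I points, for every $\alpha$ the geodesic from $\alpha$ into $\Gamma_{FR,n}$ either misses $\Bary(\mu_\phi)$ entirely or meets it at points strictly separated from $\alpha$; we may therefore place $Q_\alpha$ strictly between $\alpha$ and $\Bary(\mu_\phi)$ on this geodesic, ensuring $(Q_\alpha,\alpha]\cap\Bary(\mu_\phi)=\emptyset$. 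Doing this simultaneously for all classical fixed points of $\phi^n$ gives $\Bary(\mu_\phi)\subseteq\Gamma_{\widehat{FR},n}$ for $n\ge N$.

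The main obstacle is the two-direction assertion at the endpoints of $\Bary(\mu_\phi)$; everything afterwards is a matter of combining weak convergence with the tree structure. A secondary technical issue is that the choice of $Q_\alpha$ above must remain consistent with the further constraints on the truncation from Section~\ref{sect:lapconv} (in particular, $[Q_\alpha,\alpha]$ should contain no branch points of $\Gamma_{FR,n}$ and the slope of $\ord\Res_{\phi^n}$ must be constant there). Verifying this amounts to checking that $\Bary(\mu_\phi)$ cannot reach beyond the nearest interior branch or slope-change point on the leaf leading to $\alpha$, which in turn follows by applying the two-direction argument at any hypothetical such extremal point of $\Bary(\mu_\phi)$.
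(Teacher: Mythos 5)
Your overall strategy matches the paper's: locate the endpoints of $\Bary(\mu_\phi)$ by finding two directions of positive $\mu_\phi$-mass, invoke the portmanteau theorem to get points of $\supp(\nu_{\phi^n})$ in those components, and conclude by connectedness of the tree $\Gamma_{\widehat{FR},n}$. You even supply an explicit justification (the partition identity) for the paper's bare assertion that two such directions exist.

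However, there is a real gap in the step $\mu_\phi(\{P_i\}) = 0$. You derive it from the fact that $\mu_\phi$ has continuous potentials, but that argument only rules out atoms at \emph{type I} points (where $-\log_v\delta(z,Q)_{\zetaG}\to-\infty$ as $z\to Q$). It says nothing about points $P_i\in\hberk$: for $\zeta\in\hberk$ the measure $\delta_\zeta$ has continuous potentials and charges $\zeta$. Indeed, when $\phi$ has potential good reduction, $\mu_\phi$ is exactly a point mass at a type II point, $\Bary(\mu_\phi)$ is that single point, and \emph{no} direction at it carries positive mass, so your two-direction claim and the partition identity $\sum_{\vv}\mu_\phi(B_{P_i}(\vv)^-)=1$ both fail. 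The paper avoids this by first disposing of the potential good reduction case separately (there $\Bary(\mu_\phi)=\MinResLoc(\phi^n)$ is a single repelling fixed point lying in $\Gamma_{\widehat{FR},n}$ by definition), and then, in the bad-reduction case, citing a genuinely stronger fact — Th\'eor\`eme E of Favre--Rivera-Letelier — that $\mu_\phi$ is atom-free on all of $\pberk$. You should do the same: split off potential good reduction, and in the bad-reduction case replace the appeal to continuous potentials with the atom-free result for $\mu_\phi$; after that substitution your partition-identity argument is sound. The remarks on choosing the truncation points $Q_\alpha$ are reasonable but somewhat more detail than the paper deemed necessary, since the excised segments $(Q_\alpha,\alpha]$ by construction carry none of the crucial points supporting $\nu_{\phi^n}$.
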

\begin{proof}
If $\phi$ has potential good reduction, then $\Bary(\mu_\phi) = \MinResLoc(\phi^n)$ for every $n$ and there is nothing to prove. So we suppose that $\phi$ has bad reduction. In particular, $\mu_\phi$ does not charge points (see \cite{FRL}, Th\'eor\`eme E). 

We first prove the result when $\Bary(\mu_\phi)$ is a single point. Let $\Bary(\mu_\phi) = \{A\}$. Necessarily we can find two directions $\vv, \vw\in T_A$ so that $\mu_\phi(B_A(\vv)^-), \mu_\phi(B_A(\vw)^-) > 0$: if there were only one direction with $\mu_\phi(B_A(\vv)^-)>0$, then this direction would have full mass, contradicting that $A$ is in the barycenter of $\mu_\phi$ (here we are using the assumption that $\phi$ has bad reduction). Let $$\epsilon = \frac{1}{2}\min(\mu_\phi(B_A(\vv)^-),\mu_\phi(B_A(\vw)^-))\ .$$  Note that the discs $B_A(\vv)^-, B_A(\vw)^-$ have the common boundary $\del(B_A(\vv)^-) = \del(B_A(\vw)^-) = \{A\}$. Since $\mu_\phi$ does not charge points, and since the measures $\nu_{\phi^n}$ converge weakly to $\mu_\phi$, we may apply the Portmanteau theorem (\cite{BR}, Theorem A.13) to find $N$ sufficiently large so that $\nu_{\phi^n}(B_A(\vv)^-), \nu_{\phi^n}(B_A(\vw)^-) > \frac{\epsilon}{2} >0$ for $n \geq N$. In particular, there is a point of $\Gamma_{\widehat{FR}, n}$ in each of $B_A(\vv)^-, B_A(\vw)^-$ for $n\geq N$, and since $\Gamma_{\widehat{FR},n}$ is connected, it follows that $\Bary(\mu_\phi)=\{A\} \subseteq \Gamma_{\widehat{FR},n}$ whenever $n\geq N$.

A similar argument will address the case that $\Bary(\mu_\phi)$ is a segment. Let $A, B$ be the endpoints of segment, and choose $\vv\in T_A^*(\mu_\phi), \vw\in T_B^*(\mu_\phi)$ with $\mu_\phi(B_A(\vv)^-), \mu_\phi(B_B(\vw)^-)>0$. Again we let $$\epsilon = \frac{1}{2}\min(\mu_\phi(B_A(\vv)^-),\mu_\phi(B_B(\vw)^-))\ .$$ The same argument as above ensures that there is an $N$ so that, for $n\geq N$, we have $$\nu_{\phi^n}(B_A(\vv)^-), \nu_{\phi^n}(B_B(\vw)^-) >\frac{\epsilon}{2}>0\ .$$ Thus there is a point of $\Gamma_{\widehat{FR},n}$ in each of $B_A(\vv)^-, B_B(\vw)^-$ for $n\geq N$. By connectedness, it follows that $\Bary(\mu_\phi)=[A,B]\subseteq \Gamma_{\widehat{FR}, n}$.

\end{proof}

\subsection{An Example: The Failure of Hausdorff Convergence}\label{sec:barydiscussion}
Several results in this section suggest that we may be able to say something stronger than the conclusion of Proposition~\ref{prop:minresepsilon}, namely, that the sets $\MinResLoc(\phi^n)$ converge in the Hausdorff metric to $\Bary(\mu_\phi)$. However, the following example, suggested by Rob Benedetto, shows that this cannot be the case in general:

\begin{ex}\label{ex:Benedettos} Let $K=\mathbb{C}_p$ for some prime $p\neq 2$. Take $\phi(T) = \dfrac{T^2-1}{p}$. Since $\phi^{(n)}$ has even degree, $\MinResLoc(\phi^n)$ will always be a single type II point (\cite{Ru1}, Theorem 0.1). However, we can show that the barycenter of $\mu_{\phi}$ is a segment.

First we claim that $\phi$ has bad reduction. If $\phi$ had potential good reduction, then there would be a repelling fixed point $\zeta\in \hberk$, which would necessarily carry $\nu_\phi$-weight. However, we will show that the only point carrying $\nu_\phi$-weight is a non-fixed point: The classical fixed points of $\phi$ satisfy $T^2-pT-1 = 0$; by the theory of Newton polygons they both have absolute value 1, and by looking at the reduction we see that they lie off of two different directions at $\zetaG$. Thus, the tree $\Gamma_{\textrm{Fix}}$ spanned by the classical fixed points is the union of $[\gamma_1, \infty]$ and $[\gamma_2, \infty]$, where $\gamma_1, \gamma_2$ are the type I finite fixed points. These two segments meet at $\zetaG$, hence $\zetaG$ is a branch point of $\Gamma_{\textrm{Fix}}$. Moreover, because $\phi$ has constant reduction we know that $\phi(\zetaG) \neq \zetaG$, and therefore $w(\zetaG) = v_{\Gamma_{\textrm{Fix}}}(\zetaG) -2 =1$. Since $\deg(\phi) = 2$, this is the only weighted point.

We now turn to finding $\Bary(\mu_\phi)$. For this, note that the preimages of $D(0,1)$ in $K$ are the discs $D\left(1, \frac{1}{p}\right)$ and $D\left(-1, \frac{1}{p}\right)$. Arguing inductively, we claim $\phi^{-j}(D(0,1))$ is a disjoint union of $2^j$ discs $D(a,r)$ with $a\equiv \pm 1 \ \textrm{ mod }\mathfrak{m}_K$, and $r=\dfrac{1}{p^j}$. To see this, suppose that $\phi^{j-1}(D(b, s)) = D(0,1)$, and consider the preimage of $D(b,s)$. The points $w\in \phi^{-1}(b)$ satisfy \begin{align*}w^2=1+pb\ .\end{align*} Hence $\phi^{-1}(D(b,s))$ are discs centered at points $w$ with $w\equiv 1$ or $w\equiv -1$ in $\tilde{k}$. For the radius, we note that

\begin{align*}
\left|\phi(w+p^j) - \phi(w)\right| = \left|\dfrac{(w+p^j)^2-1}{p} - \dfrac{w^2-1}{p}\right| & = \left|\dfrac{2wp^j + p^{2j}}{p}\right|\\
& = \left|p^{j-1}\right|\cdot \left| 2w+p^j\right|\\
& = \dfrac{1}{p^{j-1}}\ .
\end{align*} Thus the point $w+p^j$ lies on the boundary of $\phi^{-1}(D(b, s))$, and so $\phi^{-1}(D(b, s)) = D\left(w, \dfrac{1}{p^j}\right)\sqcup D\left( w', \dfrac{1}{p^j}\right)$. Since $|w-w'| = 1$, these two discs are disjoint. Moreover, they are disjoint from any other disc in $\phi^{-j}(D(0,1))$:   suppose $D(c, t)\neq D(b, s)$ is another disc with $\phi^{j-1}(D(c,t))= D(0,1)$ whose preimages are $D(v, p^{-j}), D(v', p^{-j})$. If $D(v, p^{-j})\cap D(w, p^{-j}) \neq \emptyset$, then these discs would necessarily be equal since they have the same diameter. In particular, $D(c,t) = \phi(D(v, p^{-j})) = \phi(D(w, p^{-j})) = D(b,s)$, which contradicts that $D(c,t) \neq D(b,s)$. 

Let $\zeta_1 = \zeta_{1, p^{-1}}, \zeta_2 = \zeta_{-1, p^{-1}}$. From the above arguments, we see that $\phi^{-j}(\zetaG)$ consists of $2^n$ points, half of which lie in $\pberk \setminus B_{\vv_\infty}(\zeta_1)$ (these are the preimages $\phi^{-j}(D(0,1)) = D(a, p^{-j})$ with $a\equiv 1 \mod\mathfrak{m}_K$) and the other half of which lie in $\pberk \setminus B_{\vv_\infty}(\zeta_2)^-$. Since the measures $\frac{1}{2^n} (\phi^{n})^* \delta_{\zetaG}$ converge weakly to $\mu_\phi$ (\cite{FRL} Th\'eor\`eme A), it follows that $\mu_\phi (B_{\vv}(\zeta_1)^-) \leq \frac{1}{2}$ for every $\vv\in T_{\zeta_1} \setminus \{\vv_{\zetaG}\}$ and $\mu_{\phi}(B_{\vv}(\zeta_2)^-) \leq \frac{1}{2}$ for every $\vv\in T_{\zeta_2}\setminus \{\vv_{\zetaG}\}$. Hence the segment $[\zeta_1, \zeta_2]$ is contained in the barycenter. Moreover, the barycenter cannot be any larger.
\end{ex}
The conclusion of Proposition~\ref{prop:minresepsilon} shows that the sets $\MinResLoc(\phi^n)$ approach $\Bary(\mu_\phi)$ in some sense, though as the above example shows we do not have Hausdorff convergence. A natural question, then, is whether the sets $\MinResLoc(\phi^n)$ converge to some \emph{subset} of $\Bary(\mu_\phi)$. If this happens, the natural follow-up question is what dynamical significance this limit set has; the author does not yet have a good answer to either of these questions.

\section{Uniform Bounds on $\MinResLoc(\phi^n)$ and $\Bary(\mu_\phi)$}\label{sect:barybounds}

In this last section, we study the distance between points in $\MinResLoc(\phi^n)$ and the Gauss point, and also the distance between points in $\Bary(\mu_\phi)$ and $\zetaG$. The main lemma used in this task is the following estimate on the growth of certain coefficients of $\Phi^n$:

\begin{lemma}\label{lem:coefficientlemma}
Let $\Phi$ be a normalized lift of $\phi$. Let $\Phi^n = [F, G]$ be a normalized lift for the $n$th iterate of $\phi$, where $F(X,Y) = \alpha_D X^D + ... + \alpha_0 Y^D$, $G(X,Y) = \beta_D X^D + ... + \beta_0 Y^D$ and $D=d^n$. Then \begin{align*} \max (|\alpha_0|, |\beta_0|) &\geq |\Res(\Phi)|^{\frac{d^n-1}{d-1}}\\ \max (|\alpha_{d^n}|, |\beta_{d^n}|) & \geq |\Res(\Phi)|^{\frac{d^n-1}{d-1}}\ .\end{align*}
\end{lemma}
\begin{proof}
We observe that $|\alpha_0| = |F(0,1)|, |\alpha_D| = |F(1,0)|$ and $|\beta_0| = |G(0,1)|, |\beta_D| = |G(1,0)|$. For a pair $(x,y)$, let $||(x,y)|| = \max(|x|, |y|)$. Then by \cite{BR} Lemma 10.1, we have 
\begin{align*}
\max( |\alpha_0|, |\beta_0|) = ||\Phi^n(0,1)|| & \geq ||\Phi^{n-1}(0,1)||^d\cdot |\Res(\Phi)|\\
& \geq ||\Phi^{n-2}(0,1)||^{d^2} \cdot |\Res(\Phi)|^{1+d}\\
& \dots\\
& \geq ||(0,1)|| \cdot |\Res(\Phi)|^{1+d+...+d^{n-1}} = |\Res(\Phi)|^{\frac{d^n-1}{d-1}}\ .
\end{align*}
A similar argument holds for $\max(|\alpha_{d^n}|, |\beta_{d^n}|)$. 

\end{proof}

Lemma~\ref{lem:coefficientlemma} above gives us a bound on the size of leading and constant coefficients of the polynomials that form a normalized lift of $\phi^n$. Similar bounds appeared in the proof of \cite{Ru1} Proposition 1.8, which gave a bound for the set $\MinResLoc(\phi)$. We can use the previous lemma to strengthen this bound for iterates:

\begin{prop}\label{prop:minreslocbd}
Let $d\geq 2$ and let $R=\frac{2}{d-1} \ordRes(\phi)$. Fix a point $x\in\mathbb{P}^1(K)$. For any point $\zeta\in[\zetaG, x]$, the function $\ordRes_{\phi^n}$ satisfies \begin{equation}\label{eq:ordreslowerbound}\frac{1}{d^{2n}-d^n}\ordRes_{\phi^n}(\zeta) \geq \rho(\zetaG, \zeta)+\frac{1}{d^{2n}-d^n} \ordRes_{\phi^n}(\zetaG) - R\ .\end{equation} 

Let $\xi$ be the unique point in $[\zetaG, x]$ such that $\rho(\zetaG, \xi) = \frac{2}{d-1}\ordRes(\phi)$. Then for each $n$, the function $\ordRes_{\phi^n}(\cdot)$ is increasing along $[\xi, x]$ as one moves away from $\xi$.
\end{prop}

\begin{proof}
The proof follows \cite{Ru1} Proposition 1.8 closely. After a change of coordinates by some $\gamma\in \GL_2(\mathcal{O})$, we can assume that $x=0$. Let $\Phi^n = [F, G]$ be a normalized lift of $\phi^n$, where $D=D^n$, $F(X,Y) = a_{D}X^{D} + ... + a_0 Y^{D}$, $G(X,Y) = b_{D} X^{D} + ... + b_0 Y^{D}$, where $a_i, b_j \in \mathcal{O}$ and at least one coefficient is a unit. 

Given $A\in K^\times$, let $\tau_A(z) = Az$. In \cite{Ru1} Proposition 1.8, Rumely shows that 
\begin{align*}
\ordRes_{\phi^n}(\zeta_{0, |A|}) -& \ordRes_{\phi^n}(\zetaG)\\
&  \geq \max \left( -2D \ord(a_0) + (D^2+D)\ord(A), -2D \ord(b_0) + (D^2-D)\ord(A),\right.\\
& \left. -2D\ord(a_D) + (D-D^2)\ord(A), -2D \ord(b_D) + (-D-D^2)\ord(A)\right)\ .
\end{align*}

\noindent Using the bounds in Lemma~\ref{lem:coefficientlemma}, this gives that 
\begin{align}
\ordRes_{\phi^n}(\zeta_{0, |A|}) -& \ordRes_{\phi^n}(\zetaG)\nonumber \\
& \geq -2D\frac{d^n-1}{d-1} \ordRes(\phi) + \max\left( (D^2-D)\ord(A), (D-D^2) \ord(A)\right)\label{eq:ordresincreasing}\ .
\end{align}

Restricting ourselves to $\ord(A)>0$, the right side of (\ref{eq:ordresincreasing}) is $$-2\frac{d^{2n}-d^n}{d-1} \ordRes(\phi) + (d^{2n}-d^n) \ord(A)\ ,$$  which establishes the first claim: $$\frac{1}{d^{2n}-d^n}\ordRes_{\phi^n}(\zeta_{0, |A|}) \geq  \ord(A) - \frac{2}{d-1} \ordRes(\phi) + \frac{1}{d^{2n}-d^n} \ordRes_{\phi^n}(\zetaG)\ .$$

When $\ord(A)=0$, the left hand side of (\ref{eq:ordresincreasing}) is exactly equal to 0. Thus, if $\ord(A)$ is chosen large enough so that the right hand side of (\ref{eq:ordresincreasing}) is positive, the function $\ordRes_{\phi^n}(\cdot)$ must be increasing for all larger values of $\ord(A)$. This is attained for $$(D^2-D) \ord(A) \geq \frac{2D(d^n-1)}{d-1} \ordRes(\phi)\ ,$$ or equivalently, inserting the definition of $D=d^n$, $$\ord(A) \geq \frac{2}{d-1} \ordRes(\phi)\ .$$ 

\end{proof}

\begin{cor}\label{cor:boundonminresloc}
Let $\phi\in K(z)$ be a rational function of degree $d\geq 2$. Let $R=\frac{2}{d-1} \ordRes(\phi)$. Then for each $n$, $$\MinResLoc(\phi^n) \subseteq B_\rho(\zetaG, R)\ .$$ In particular, $\diam(\MinResLoc(\phi^n)) \leq \frac{4}{d-1} \ordRes(\phi)\ .$
\end{cor}

Note that this proposition and its corollary imply that the bound in Lemma~\ref{lem:coefficientlemma} is as sharp as one would expect in general. In particular, if the bound grew more slowly, say exponentially of order $n$ rather than order $d^n$, we could find a sequence of radii $R_n \to 0$ with $\MinResLoc(\phi^n) \subseteq B_\rho(\zetaG, R_n)$, which isn't true in general. Proposition~\ref{prop:minreslocbd} can also be used to give a lower bound for the Arakelov-Green's function:

\begin{lemma}
Let $R=\frac{2}{d-1}\ordRes(\phi)$. Fix any type I point $x$. For any point $\zeta\in [\zetaG, x]$, we have $$g_{\phi}(\zeta, \zeta) \geq \rho(\zetaG, \zeta) + g_{\phi}(\zetaG, \zetaG) - R\ .$$
\end{lemma}
\begin{proof}
We use the convergence of the functions $\frac{1}{d^{2n}-d^n}\ordRes_{\phi^n}(x)$ given in Theorem~\ref{thm:fnconv}. Let $\epsilon >0$, and fix $\zeta\in [\zetaG,x]$. We may choose $n$ large enough so that
\begin{align*}
\left|\frac{1}{d^{2n}-d^n} \ordRes_{\phi^n}(\zeta) - g_\phi(\zeta, \zeta)\right| &< \epsilon \\
\left|\frac{1}{d^{2n}-d^n} \ordRes_{\phi^n}(\zetaG) - g_\phi(\zetaG, \zetaG)\right| &< \epsilon \ .
\end{align*}

Combining this with (\ref{eq:ordreslowerbound}), we find 
\begin{align*}
g_\phi(\zeta, \zeta)+\epsilon &\geq \frac{1}{d^{2n}-d^n} \ordRes_{\phi^n}(\zeta)\\
& \geq  \rho(\zetaG, \zeta) - R + \frac{1}{d^{2n}-d^n} \ordRes_{\phi^n}(\zetaG) \\
& \geq \rho(\zetaG, \zeta) -R + g_\phi(\zetaG, \zetaG) - \epsilon\ .
\end{align*}
 Letting $\epsilon \to 0$ gives the result.

\end{proof}

We can apply this to obtain a bound on the distance of $\Bary(\mu_\phi)$ to $\zetaG$:

\begin{prop}\label{prop:RboundsonBary}
Let $R=\frac{2}{d-1} \ordRes(\phi)$ and $m_0 = \min_{x\in \pberk} g_\phi(x,x)$. Then $$\Bary(\mu_\phi) \subseteq B_\rho(\zetaG, R+m_0 - g_\phi(\zetaG))\ .$$ We further have $$\diam^\rho(\Bary(\mu_\phi)) \leq 2 (R+m_0 - g_\phi(\zetaG))\ ,$$ where $\diam^\rho$ is the diameter in the $\rho$-metric. In particular, if we choose a coordinate system so that $\zetaG\in \Bary(\mu_\phi)$, then $$\Bary(\mu_\phi) \subseteq B_\rho(\zetaG, R)$$ and $$\diam^\rho(\Bary(\mu_\phi)) \leq 2R\ .$$ 
\end{prop}

\begin{proof}
Let $R=\frac{2}{d-1}\ordRes(\phi)$, and fix $\epsilon>0$. Let $\Bary(\mu_\phi)$ be the segment $[\zeta_1, \zeta_2]$, and without loss of generality assume $\rho(\zetaG, \zeta_2) \geq \rho(\zetaG, \zeta_1)$. By the preceeding lemma, $$\rho(\zetaG, \zeta_2) \leq g_\phi(\zeta_2, \zeta_2) + R - g_\phi(\zetaG, \zetaG)\ .$$ Since $\zeta_2\in \Bary(\mu_\phi)$ and $g_\phi(x,x)$ is minimized on $\Bary(\mu_\phi)$, this gives $$\rho(\zetaG, \zeta_2) \leq R + m_0 - g_\phi(\zetaG, \zetaG)\ .$$ The last assertion follows from the fact that $g_\phi(\zetaG, \zetaG) = m_0$ if $\zetaG\in \Bary(\mu_\phi)$. The statements about the diameters are immediate.

\end{proof}

\subsection{Multipliers of Periodic Points}

Lemma~\ref{lem:coefficientlemma} can also be used to bound how repelling a type I repelling $n$-periodic point can be. More precisely, we have

\begin{prop}
Let $P$ be a type I repelling $n$-periodic point for $\phi$. Let $\Phi$ be a normalized lift for $\phi$. If $\lambda_P$ is the multiplier of $P$, we have $$|\lambda_P| \leq |\Res(\Phi)|^{-\frac{d^n-1}{d-1}}\ .$$
\end{prop}
\begin{proof}
After changing co\"ordinates by an element $\gamma\in \PGL_2(\mathcal{O})$, we may assume that $P=0$. Note that $\lambda_P$ and $|\Res(\Phi)|$ are unaffected by this type of conjugation. 

Let $D=d^n$ and $\phi^n(z) = \frac{f(z)}{g(z)}$, where $f(z) = a_D z^D+ ... + a_1 z$, $g(z) = b_D z^D+...+b_1 z+b_0$ are normalized, coprime polynomials representing the $n$th iterate of $\phi$. We have that $|a_1|\leq 1$ and $b_0 \neq 0$. The multiplier $\lambda_P$ is given $$\lambda_P = \frac{a_1}{b_0}\ .$$ By Lemma~\ref{lem:coefficientlemma}, we know $$\frac{1}{|b_0|} \leq |\Res(\Phi)|^{-\frac{d^n-1}{d-1}}\ .$$ Thus, $$|\lambda_P| = \frac{|a_1|}{|b_0|} \leq |\Res(\Phi)|^{-\frac{d^n-1}{d-1}}\ .$$
\end{proof}

\bibliography{Jacobs_Equidistribution_Resubmission_3}
\bibliographystyle{plain}

\end{document}